\journal{Topology and its Applications}
\newtheorem{thm}{Theorem}[section]
\newtheorem{cor}[thm]{Corollary}
\newtheorem{lem}[thm]{Lemma}
\newtheorem{prop}[thm]{Proposition}
\theoremstyle{definition}
\newtheorem{defn}[thm]{Definition}
\newtheorem{rem}[thm]{Remark}
\newtheorem{exm}[thm]{Example}
\numberwithin{equation}{section}
\DeclareMathAlphabet{\matheurm}{U}{eur}{m}{n}
\newcommand{\cpt}{\mathrm{cpt}} 
\newcommand{\lrg}{\mathrm{lrg}} 
\newcommand{\sB}{\mathscr{B}}
\newcommand{\sC}{\mathscr{C}}
\newcommand{\sE}{\mathscr{E}}
\renewcommand{\C}{\mathbb{C}}
\newcommand{\N}{\mathbb{N}}
\newcommand{\Q}{\mathbb{Q}}
\newcommand{\R}{\mathbb{R}}
\newcommand{\Z}{\mathbb{Z}}
\renewcommand{\G}{\Gamma}
\newcommand{\cC}{\mathcal{C}}
\newcommand{\cF}{\mathcal{F}}
\newcommand{\cM}{\mathcal{M}}
\newcommand{\cP}{\mathcal{P}}
\newcommand{\cT}{\mathcal{T}}
\newcommand{\cU}{\mathcal{U}}
\newcommand{\cV}{\mathcal{V}}
\newcommand{\Aut}{\mathrm{Aut}}
\newcommand{\Homeo}{\mathrm{Homeo}}
\newcommand{\id}{\mathrm{id}}
\newcommand{\proj}{\mathrm{proj}}
\newcommand{\TOP}{\mathrm{TOP}}
\newcommand{\bdry}{\partial}
\newcommand{\eps}{\varepsilon}
\newcommand{\iso}{\cong}
\newcommand{\longra}{\longrightarrow}
\newcommand{\x}{\times}
\newcommand{\vphi}{\varphi}
\newcommand{\ol}[1]{\overline{#1}}
\newcommand{\inv}{^{-1}}
\newcommand{\card}{\mathrm{card}}
\newcommand{\cB}{\mathcal{B}}
\newcommand{\kc}{\mathfrak{c}}
\newcommand{\bI}{\mathbb{I}}
\newcommand{\pr}{\mathrm{pr}}
\newcommand{\gens}[1]{\langle #1 \rangle}
\newcommand{\DS}{\displaystyle}
\newcommand{\bs}{\backslash}
\newcommand{\cR}{\mathcal{R}}
\newcommand{\bE}{\mathbb{E}}
\newcommand{\bB}{\mathbb{B}}
\newcommand{\cK}{\mathcal{K}}
\newcommand{\bigbcoast}{\mathop{\boxed{\bigcoast}}}
\begin{document}

\begin{frontmatter}
 
\title{Cardinal-indexed classifying spaces for families\\ of subgroups of any topological group}
\tnotetext[dedication]{This paper is dedicated to Sergey Antonyan on the occasion of his 65th birthday.}

\author{Qayum Khan}

\address{Department of Mathematics \hfill Indiana University \hfill Bloomington IN 47405 USA}
\ead{qkhan@indiana.edu}
 
\begin{abstract}
For $G$ a topological group, existence theorems by Milnor (1956), Gelfand--Fuks (1968), and Segal (1975) of classifying spaces for principal $G$-bundles are generalized to $G$-spaces with torsion.
Namely, any $G$-space approximately covered by tubes (a generalization of local trivialization) is the pullback of a universal space indexed by the orbit types of tubes and cardinality of the cover.
For $G$ a Lie group, via a metric model we generalize the corresponding uniqueness theorem by Palais (1960) and Bredon (1972) for compact $G$.
Namely, the $G$-homeomorphism types of proper $G$-spaces over a metric space correspond to stratified-homotopy classes of orbit classifying maps.

The former existence result is enabled by Segal's clever but esoteric use of non-Hausdorff spaces.
The latter uniqueness result is enabled by our own development of equivariant ANR theory for noncompact Lie $G$.
Applications include the existence part of classification for unstructured fiber bundles with locally compact Hausdorff fiber and with locally connected base or fiber, as well as for equivariant principal bundles which in certain cases via other models is due to Lashof--May (1986) and to L\"uck--Uribe (2014).
From a categorical perspective, our general model $E_\cF^\kappa G$ is a final object inspired by the formulation of the Baum--Connes conjecture (1994).
\end{abstract}

\begin{keyword}
classifying space \sep transformation group \sep stratified space \sep equivariant absolute neighborhood retract \sep noncompact Lie group

\MSC[2020] 54H11 \sep 55R15 \sep 58A35 \sep 54C55 \sep 57S20

\end{keyword}

\end{frontmatter}

\section*{Introduction}

Let $G$ be a topological group.
Throughout this paper, we mostly consider right $G$-spaces $X$.
Write $X/G := \{ xG ~|~ x \in X \}$ for its \emph{orbit space,} where $xG := \{ xg ~|~ g \in G \}$ denotes an \emph{orbit.}
An \emph{isotropy group} is the subgroup $G_x := \{ g \in G ~|~ xg = x \}$.
For any subgroup $H$ of $G$, consider the \emph{right $H$-cosets} $Hg := \{ hg ~|~ h \in H \}$ and endow the right $G$-set $H\bs G := \{ Hg ~|~ g \in G \}$ with the quotient topology.
The \emph{balanced product} of a right $G$-space $X$ and a left $G$-space $Y$ is
\[
X \x_G Y ~:=~ (X \x Y) ~/~ ( (xg,y) \sim (x,gy) ).
\]

Historically, several models of classifying spaces for principal $G$-bundles exist.
Milnor (1956) introduced $\bE G$ over base spaces $B$ that are paracompact Hausdorff, and Dold (1963) proved a uniqueness theorem in which homotopy classes of maps $B \longra \bB G := \bE G / G$ correspond to isomorphism clases of principal bundles over $B$.
Gelfand--Fuks (1968) generalized this to all base spaces $B$ that are Tikhonov (that is, completely regular Hausdorff) via unnormalized joins, and Segal (1975) used non-Hausdorff cones to further observe a model $EG$ that works for $B$ being any topological space of arbitrary weight; however both are at the loss of uniqueness.

We extend Segal's model to a so-called $E_\cF G$ to allow for fixed points with isotropy conjugate into a given set $\cF$ of subgroups of $G$, where local triviality $U \x G$ of a principal bundle is replaced by the notion of an \emph{$H$-tube} $S \x_H G$, that is, the induction of a right $H$-space $S$ (called a slice) to a right $G$-space.
Consequently, other models over more specialized base spaces admit a canonical $G$-map to this one, that is, it is more universal for $B$ any topological space.
To avoid set-theoretic paradoxes, it is limited to $E_\cF^\kappa G$ by the \emph{weight $wB \leqslant \kappa$ of $B$}, which is the minimum cardinality for a base of the topology; for example, second-countable spaces have weight the first infinite cardinal $\aleph_0$.
If the covering of the $G$-space $X$ by tubes is \emph{approximate} and $\cF$ consists of closed subgroups, then our existence theorem (\ref{thm:classify}) is that $X$ is the pullback of $E_\cF^\kappa G$ along a map $X/G \longra E_\cF^\kappa G/G$ whose explicit formula is canonically determined by the tube data with $\kappa = w(X/G)$.

Now assume $G$ is an arbitrary Lie group, such as a countable discrete group.
Following Bredon's improvement (1972) of Palais' argument (1960) for compact Lie $G$, as well as employing and developing (\ref{thm:extensor}) modern advances in the equivariant theory of absolute neighborhood retracts (ANRs), we prove the following uniqueness theorem (\ref{thm:unique}).
Suppose that $X$ admits a $G$-invariant metric and that the action of $G$ is \emph{proper in the sense of Palais} (1961), so an \emph{isovariant} ($H=G_x$) covering by tubes exists by Palais' slice theorem.
Then the isomorphism classes of such $X$ over a given base space $B = X/G$ bijectively correspond to stratified-homotopy classes of maps $B \longra \bB_\cF G$, where $B$ has the induced orbit-type stratification and $\cF$ is a set of compact subgroups of $G$ containing all the isotropy groups of $X$ without any conjugate representatives.
Here $\bB_\cF G$ is the orbit space of the right $G$-space $\bE_\cF G$, which is our unnormalized join inspired by Gelfand--Fuks with coarse cones instead of fine ones.
Palais--Bredon assume that $\cF$ is finite so use normalized (Milnor) joins, as well as assume that $B$ is finite-dimensional; we remove these cardinal limitations, as well as no longer assume $G$ is compact whereby proper was automatic.

\section{Preliminaries}

M~McCord introduced the following notion as non-Hausdorff cone \cite[\S8]{McCord}.

\begin{defn}[McCord]
Recall Sierpi\'{n}ski\footnote{Sierpi\'{n}ski \cite[\S3, \S9]{Sierpinski_book2eng} noted $\bI_1$ is the nondiscrete nonindiscrete Fr\'{e}chet $\cV$-space \cite[V]{Frechet} on two points. Open sets of a topological space correspond bijectively to continuous functions~to~$\bI_1$.} space $\bI_1 := (\{0,1\}, \{\emptyset, \{1\}, \{0,1\}\})$.
Let $(A,\cT)$ be a topological space.
Write $A_+ := A\sqcup\{0\}$.
The \textbf{indiscrete cone} is
\[
c(A,\cT) ~:=~ (A,\cT)_+ \wedge \bI_1
~=~ (A_+, \cT\cup\{A_+\}).
\]
\end{defn}

This let G~Segal \cite{Segal} cleverly simplify a construction of Gelfand--Fuks \cite{GF}.
We generalize it here to non-free actions for which Segal's construction is $\cF = \{1\}$.

\begin{defn}\label{defn:Segal}
Let $G$ be a topological group.
Let $\cF$ be any set of subgroups of $G$.
Define $\cF \bs G := \bigsqcup_{H \in \cF} H \bs G$ equipped with the coherent topology.
Let $\kappa$ be a cardinal.
Write $\kappa = \card(I)$ for a set $I$.
Using the product and subspace topologies, define
\[
E_\cF^\kappa G ~:=~ (c(\cF \bs G))^I - \{0\}^I
\]
and the \textbf{$\kappa$-indexed $\cF$-classifying space} $B_\cF^\kappa G := E_\cF^\kappa G / G$ with quotient topology.
Note the $G$-homeomorphism type of $E_\cF^\kappa G$ does not depend on the representative $I$.
For less cumbersome reading, we abbreviate $E^\kappa G := E_{\{1\}}^\kappa G$ and $B^\kappa G := B_{\{1\}}^\kappa G$.
\end{defn}

We expand Biller's \cite[2.1]{Biller_proper} beyond Hausdorff $G$ and $X$ and compact $H \in \cF$.
Our expanded definition here also adds the notions of approximate and isovariant.

\begin{defn}\label{defn:isocover}
Let $G$ be a topological group.
Let $X$ be a topological $G$-space.
For any $x \in X$, its \textbf{isotropy group} is $G_x := \{g \in G ~|~ xg=x \}$.
Let $\cF$ be any set of subgroups of $G$.
An open $G$-subset $T$ of $X$ is an \textbf{$\cF$-tube} if it is $G$-homeomorphic to $S \x_H G$ for some $H \in \cF$ and $H$-space $S$.
We say that $X$ is \textbf{covered by $\cF$-tubes} if $X = \bigcup_{i \in I} T_i$ for some $\cF$-tubes $\{T_i \approx S_i \x_{H_i} G \}_{i \in I}$.
More specifically, the cover is \textbf{approximate} if, for each point $x \in X$ and neighborhood $O$ of $G_x$ in $G$, there exist $i \in I$ and $g \in G$ with $x \in T_i$ and $G_x \leqslant g\inv H_i g \subset O$.
In particular, the cover is \textbf{isovariant} if each $x \in X$ admits some $i \in I$ with $x \in T_i$ and $G_x$ conjugate to~$H_i$.
\end{defn}

(Isovariant) covering by $\cF$-tubes implies tomDieck's ``(strongly) locally $\cF$-trivial'' \cite[p46]{tomDieck_TG}.

\begin{prop}\label{prop:cover_classifying}
Any $E_\cF^\kappa G$ can be covered by $\cF$-tubes, in fact, by $\kappa \cdot \card(\cF)$-many.
Also, the cover by $\cF$-tubes is isovariant when restricted to the following $G$-subset:
\[
\sE_\cF^\kappa G ~:=~ \left\{ e \in E_\cF^\kappa G ~|~ \exists i \in I : G_e = G_{e_i} \right\}.
\]
\end{prop}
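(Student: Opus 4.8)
The plan is to write down an explicit cover of $E_\cF^\kappa G$ and recognize each member as a tube by the standard ``induction from a slice'' device. Abbreviate $A := \cF\bs G$, so a point of $E_\cF^\kappa G = c(A)^I - \{0\}^I$ is a function $e\colon I\to A_+$ that is not identically $0$, with $(eg)_i := e_i g$; the $G$-action fixes $0$ and is right translation on each summand $H\bs G$ of $A$, so $G_e = \bigcap_{i\colon e_i\neq 0}G_{e_i}$, with $G_{e_i} = g_i\inv H g_i$ whenever $e_i$ is the coset $H g_i\in H\bs G$. For each $(i_0,H)\in I\x\cF$ I would put
\[
U_{i_0,H} ~:=~ \{\, e\in E_\cF^\kappa G ~|~ e_{i_0}\in H\bs G \,\}.
\]
The routine part is: each $H\bs G$ is open in $c(A)$, so $U_{i_0,H}$ is an open $G$-subset; there are at most $\card(I)\cdot\card(\cF) = \kappa\cdot\card(\cF)$ of them; and they cover $E_\cF^\kappa G$ since every $e\neq\{0\}^I$ has some coordinate in some summand $H\bs G$ of $A$.

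Next I would show $U_{i_0,H}$ is an $\cF$-tube with model group $H$. The $i_0$-th coordinate restricts to a continuous $G$-map $p\colon U_{i_0,H}\to H\bs G$ onto the transitive $G$-space $H\bs G$, a single orbit whose isotropy at the coset $H\cdot 1$ (of $1\in G$) is $H$; set $S := p\inv(H\cdot 1) = \{\, e\in E_\cF^\kappa G ~|~ e_{i_0} = H\cdot 1 \,\}$, an $H$-space since $H = G_{H\cdot 1}$ preserves it. I claim the canonical map $\phi\colon S\x_H G\to U_{i_0,H}$, $[s,g]\mapsto sg$, is a $G$-homeomorphism. It is a continuous $G$-bijection: it is onto because $e = (eg\inv)g$ with $eg\inv\in S$ for any $g$ with $Hg = e_{i_0}$; it is one-to-one because $\phi$ recovers $e_{i_0} = Hg$, hence $g$ modulo $H$, and then $s$ by cancelling the $G$-action. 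Its inverse is $e\mapsto[\,eg\inv, g\,]$. The main obstacle is continuity of this inverse, because $q\colon G\to H\bs G$ need not admit local sections. I would get around this by pulling $q$ back along $p$: on $P := U_{i_0,H}\x_{H\bs G}G = \{\,(e,g) ~|~ e_{i_0}=Hg\,\}$ the assignment $(e,g)\mapsto[\,eg\inv, g\,]$ is visibly continuous into $S\x_H G$ and equals the claimed inverse precomposed with $\pr_1\colon P\to U_{i_0,H}$; since $\pr_1$ is the pullback of the open surjection $q$, it is itself an open surjection, hence a quotient map, and continuity of the inverse follows.

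For the final clause, first note $\sE_\cF^\kappa G$ is a $G$-subset: a witness index $i$ for $G_e = G_{e_i}$ also witnesses $G_{eg} = G_{(eg)_i}$ after conjugating by $g$. Hence each $U_{i_0,H}\cap\sE_\cF^\kappa G$ is again an $\cF$-tube — rerunning the previous step on the restriction of $p$ gives it slice $S\cap\sE_\cF^\kappa G$. Isovariance then drops out of the definition of $\sE_\cF^\kappa G$: given $e$ there, choose a witness $i_0$ with $G_e = G_{e_{i_0}}$; should the chosen one have $e_{i_0} = 0$ then $G_e = G$, which forces $G_{e_j} = G$ for every nonzero coordinate $e_j$, so we may take $i_0$ with $e_{i_0}\neq 0$ instead. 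Then $e_{i_0}$ lies in $H\bs G$ for some $H\in\cF$, so $e\in U_{i_0,H}$ while $G_e = G_{e_{i_0}} = g_{i_0}\inv H g_{i_0}$ is conjugate to $H$ — precisely what isovariance of the cover at $e$ demands. Apart from the continuity point flagged above, the whole argument is unwinding Definitions~\ref{defn:Segal} and~\ref{defn:isocover}.
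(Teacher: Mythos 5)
Your proof is correct, and you arrive at exactly the same cover: your $U_{i_0,H}$ and slice $S$ are the paper's $T(i,H)$ and $S(i,H)$, and your handling of the isovariance clause (including the replacement of a zero witness coordinate by a nonzero one when $G_e=G$) matches the paper's. Where you genuinely diverge is the key technical step, namely that the canonical bijection $S\x_H G \longra U_{i_0,H}$ is a $G$-homeomorphism. The paper proves this by showing the forward map $\mu$ is \emph{open}: it computes the image of each basic open set $B_1\x O\cap\cdots\cap B_n\x O$ coordinate-by-coordinate in the product topology of $(c(\cF\bs G))^I$, using that $UO$ is open in $K\bs G$ and that subbasic sets behave well under the injective $\mu$. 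You instead prove continuity of the \emph{inverse} by the standard induced-space device: pull back the open quotient $q:G\longra H\bs G$ along $p$ to get an open surjection (hence quotient map) $\pr_1:P\longra U_{i_0,H}$, check that $(e,g)\longmapsto[eg\inv,g]$ is continuous and constant on its fibers, and descend. Your route is the general fact that any $G$-space admitting a $G$-map to $H\bs G$ is induced from the fiber over the identity coset (cf.\ \cite[I:4.12ff]{tomDieck_TG}); it needs no information about the product or coherent topologies and no separation axioms, so it is shorter and more reusable, whereas the paper's subbase computation is self-contained and makes the open sets of the tube explicit. Both arguments are sound; yours is arguably the more conceptual one.
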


We shall call $\sB_\cF^\kappa G := \sE_\cF^\kappa G / G$ the \textbf{isovariant $\kappa$-indexed $\cF$-classifying space}.
The $G$-space $\sE_\cF^\kappa G$ is analogous to Palais' reduced join \cite[1.3.6]{Palais_book} \cite[p108]{Bredon_TG}.
Note $\sE_\cF^\kappa G$ is dense in $E_\cF^\kappa G$ if $\cF$ is closed under conjugacy and $\kappa$-fold intersections.

\begin{proof}
Fix $i \in I$ and $H \in \cF$.
Define a $G$-space $T(i,H)$ and $H$-subspace $S(i,H)$ by
\begin{eqnarray*}
T(i,H) &:=& \{ e \in E_\cF^\kappa G ~|~  e_i \in H \bs G \}\\
S(i,H) &:=& \{ e \in E_\cF^\kappa G ~|~  e_i = H \in H \bs G \}.
\end{eqnarray*}
It remains to show that the following canonical bijective $G$-map is an open function:
\begin{equation}\label{eqn:tube}
\mu: S(i,H) \x_H G \longra T(i,H) \quad;\quad [e,g] \longmapsto eg = (e_i g)_{i \in I}.
\end{equation}

Let $O$ be open in $G$.
Let $j \neq i \in I$ and $K \in \cF$.
Let $U$ be open in $K \bs G$.
Consider
\[
B(j,U) ~:=~ \{ e \in S(i,H) ~|~ e_j \in U \}.
\]
For any $k \in I$, write $\pi_k: (c(\cF \bs G))^I \longra c(\cF \bs G)$ for the $k$-th projection $(e \mapsto e_k)$ and $V_k : = (\pi_k \circ \mu)[B(j,U) \x O]$.
Since $O$ is open $G$, note $V_i = \{H\} O = \{H\} (\bigcup_{h \in H} h O)$ is open in $H \bs G$.
Since $U$ is open in $K \bs G$, note $V_j = U O = \bigcup_{g \in O} U g$ is open in $K \bs G$ \cite[I:3.1i]{tomDieck_TG}.
Otherwise $V_k = c(\cF \bs G)$ is open in $c(\cF \bs G)$ for all $k \neq i,j$.
Thus $\mu[B(j,U) \x O] \subseteq T(i,H)$ is open with respect to the product topology of $(c(\cF \bs G))^I$.

Observe that the subspace topology of $S(i,H)$ has subbase
\[
\cB ~:=~ \left\{ B(j,U) ~|~ j \in I-\{i\} \text{ and } \exists K \in \cF : U \text{ is open in } K \bs G \right\}.
\]
Let $B = B_1 \cap \cdots \cap B_n$ be in the base generated by $\cB$.
Since $\mu$ is injective, note
\[
\mu[B \x O] ~=~ \mu[B_1 \x O \cap \cdots \cap B_n \x O] ~=~ \mu[B_1 \x O] \cap \cdots \cap \mu[B_n \x O]
\]
is open in $T(i,H)$.
Thus $\mu[O' \x O]$ is open in $T(i,H)$ for any open set $O'$ in $S(i,H)$.
Therefore $\mu[W]$ is open for any open set $W$ in product topology of $S(i,H) \x G$.
As $[-]: S(i,H) \x G \longra S(i,H) \x_H G$ is continuous, $\mu$ is open so a $G$-homeomorphism.
Hence each $e \in E_\cF^\kappa G = (c(\cF \bs G))^I - \{0\}^I$ is a member of some $\cF$-tube $T(i,H)$.

Finally, let $e \in \sE_\cF^\kappa G$.
Then $G_e = G_{e_i}$ for some $i \in I$.
We may assume $e_i \neq 0$.
Then $e_i \in H \bs G$ for some $H \in \cF$.
So $e \in T(i,H) \cap \sE_\cF^\kappa G \approx (S(i,H) \cap \sE_\cF^\kappa G) \x_H G$.
\end{proof}

\section{The classifying property: existence}

\begin{thm}\label{thm:classify}
Let $G$ be a topological group.
Let $\cF$ be any set of subgroups of $G$.
Let $\kappa$ be a cardinal.
Let $X$ be a $G$-space isovariantly covered by $\kappa$-many $\cF$-tubes.
Then $X$ is $G$-homeomorphic to the pullback $f^*(\sE_\cF^\kappa G)$ for some map $f: X/G \longra \sB_\cF^\kappa G$.
The conclusion holds with $E_\cF^\kappa G$ if the cover is only approximate and each $G_x$ closed.
\end{thm}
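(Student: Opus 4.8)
The plan is to manufacture, directly from the covering data, an explicit $G$-map $\Psi$ from $X$ into $\sE_\cF^\kappa G$ (resp.\ $E_\cF^\kappa G$), to push it down to a map $f\colon X/G\to\sB_\cF^\kappa G$ (resp.\ $B_\cF^\kappa G$), and then to identify $X$ with the pullback $f^*(\sE_\cF^\kappa G)$ (resp.\ $f^*(E_\cF^\kappa G)$). Write the isovariant cover as $\{T_i\approx_{\theta_i}S_i\x_{H_i}G\}_{i\in I}$ with $\card(I)=\kappa$ and each $H_i\in\cF$. For each $i$ let $q_i\colon S_i\x_{H_i}G\to H_i\bs G$, $[s,g]\mapsto H_ig$, which is a continuous $G$-map, and define $\psi_i\colon X\to c(\cF\bs G)$ to be $q_i\circ\theta_i$ on $T_i$ (composed with the inclusion $H_i\bs G\hookrightarrow c(\cF\bs G)$) and the constant $0$ on $X\setminus T_i$. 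The decisive role of McCord's indiscrete cone is that $\psi_i$ is then automatically continuous: the preimage of an open subset of $\cF\bs G$ is open in the open set $T_i$ hence in $X$, and the preimage of the whole cone is $X$, so nothing has to be checked along the frontier of $T_i$. Each $\psi_i$ is $G$-equivariant, $G$ fixing the cone point.

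Next I would set $\Psi:=(\psi_i)_{i\in I}\colon X\to(c(\cF\bs G))^I$, which is continuous and $G$-equivariant; since $\{T_i\}_{i\in I}$ covers $X$, at least one coordinate of $\Psi(x)$ is nonzero, so $\Psi$ lands in $E_\cF^\kappa G$. Passing to orbit spaces produces $f\colon X/G\to B_\cF^\kappa G$ with $\pi\circ\Psi=f\circ\pi_X$, and hence a $G$-map $\Phi\colon X\to f^*(E_\cF^\kappa G)$, $x\mapsto(\pi_X(x),\Psi(x))$, into the pullback. This $\Phi$ is onto: if $f(\pi_X(x_0))=\pi(e)$, then $e$ and $\Psi(x_0)$ lie on one $G$-orbit, say $e=\Psi(x_0)g$, and then $(\pi_X(x_0),e)=\Phi(x_0g)$. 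And $\Phi$ is one-to-one as soon as $G_{\Psi(x)}=G_x$ for every $x$: if $\pi_X(x)=\pi_X(x')$ and $\Psi(x)=\Psi(x')$, write $x'=xg$, so $g\in G_{\Psi(x)}=G_x$ and $x'=x$.

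The heart of the proof, and the step I expect to be the main obstacle, is this isotropy matching, together with the refinement that $\Psi$ actually factors through the reduced join in the isovariant case. Reading off $\theta_i(x)=[s,g]$ gives $G_{\psi_i(x)}=g\inv H_ig$, a $G$-conjugate of $H_i$ containing $G_x=g\inv(H_i)_sg$, so $G_x\leqslant G_{\Psi(x)}=\bigcap_{i:\,x\in T_i}g\inv H_ig$ is automatic; the content is the reverse inclusion, which can genuinely fail for the crude maps $q_i\circ\theta_i$ because $q_i$ discards the finer isotropy of $[s,g]$. In the approximate case with each $G_x$ closed, approximateness supplies, for every neighborhood $O$ of $G_x$ in $G$, some $i$ with $x\in T_i$ and $G_x\leqslant g\inv H_ig\subset O$; thus $G_{\Psi(x)}$ is contained in every neighborhood of $G_x$, and since $G_x$ is closed in the topological group $G$ (whence separated from external points by open neighborhoods of $G_x$), this forces $G_{\Psi(x)}=G_x$. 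In the isovariant case one must first refine and re-present the given tubes so that each $x$ lies centrally in some $T_i$ with $(H_i)_s=H_i$ a $G$-conjugate of $G_x$ --- this is where the isovariance hypothesis is consumed in full --- whereupon $\psi_i(x)$ has isotropy exactly $G_x$; that simultaneously yields $G_{\Psi(x)}=G_x=G_{\psi_i(x)}$ and exhibits $\Psi(x)\in\sE_\cF^\kappa G$, so $\Phi$ takes values in $f^*(\sE_\cF^\kappa G)$.

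Finally I would check that $\Phi$ is open, hence a $G$-homeomorphism. This is local over the coordinate tubes of $E_\cF^\kappa G$: $\Psi$ carries $T_i$ into $T(i,H_i)$, and by Proposition~\ref{prop:cover_classifying} the canonical chart $\mu\colon S(i,H_i)\x_{H_i}G\to T(i,H_i)$, as well as its restriction over $\sE_\cF^\kappa G$, is an open $G$-map; combining the open chart $\theta_i$ of $T_i$, the openness of the projections $q_i$, and the open chart $\mu$, one verifies that $\Phi$ sends the basic open sets of $X$ pulled back from $S_i\x_{H_i}G$ to open sets of the pullback. The argument with $E_\cF^\kappa G$ in place of $\sE_\cF^\kappa G$ is identical, dropping only the verification that $\Psi$ lands in the reduced join.
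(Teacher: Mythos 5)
Your proposal follows essentially the same route as the paper's proof: the coordinate maps $q_i\circ\phi_i$ extended by zero through McCord's indiscrete cone, the induced map $f$ on orbit spaces, and the verification that the canonical $G$-map to the pullback is an open bijection, with injectivity reduced to the isotropy matching $G_{F(x)}=G_x$ (handled in the approximate case exactly as in the paper, via closedness of $G_x$ and regularity of $G$, and in the isovariant case via the same observation that the paper phrases as $F_i(x)=H_i n g$ with $n\in N_G(H_i)$). The only cosmetic difference is that you deduce openness from the tube charts of Proposition~\ref{prop:cover_classifying}, whereas the paper computes $\Psi(\phi_i\inv[U\x H_iO])$ directly as the trace of an explicit open box; both are sound.
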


Recall that each isotropy group $G_x$ is closed if $X$ is Hausdorff ($T_2$) \cite[I:3.5]{tomDieck_TG}.
Any space is \textbf{regular} if any neighborhood of a point has a closed subneighborhood.

\begin{proof}
Let $i \in I$.
There is a $G$-homeomorphism $\phi_i: T_i \longra S_i\x_{H_i} G$ with $H_i \in \cF$.
Write $q_i: S_i \x_{H_i} G \longra H_i \bs G$ for the $G$-map $[s,g] \longmapsto H_i g$.
Define a $G$-map
\[
F_i: X \longra c(\cF \bs G) \quad;\quad x \longmapsto
\begin{cases}
(q_i \circ \phi_i)(x) & \text{if } x \in T_i\\
0 & \text{if } x \notin T_i.
\end{cases}
\]
The $G$-map $F: X \longra E_\cF^\kappa G$ whose $i$-th coordinate is $F_i$ induces $f: X/G \longra B_\cF^\kappa G$.
It remains to prove the following canonical surjective $G$-map is an open injection:
\[
\Psi: X \longra f^*(E_\cF^\kappa G) \quad;\quad x \longmapsto (xG, F(x)).
\]

Fix $i \in I$.
Let $O$ be open in $G$, and let $U$ be open in $S_i$.
Consider the open set
\[
V ~:=~ \phi_i\inv[U \x G] G ~\x~ \{ e \in E_\cF^\kappa ~|~ e_i \subset H_i O \}
\]
in $X/G \x E_\cF^\kappa G$, with the quotient map $X \longra X/G; x \longmapsto xG$ open \cite[I:3.1iv]{tomDieck_TG}.
Note $\Psi(\phi_i\inv[U \x H_i O]) = V \cap f^*(E_\cF^\kappa G)$.
Thus $\Psi|T_i$, hence $\Psi$, is an open function.

It remains to show that $\Psi$ is injective.
Suppose $\Psi(x)=\Psi(y)$ for some $x,y \in X$.
Then $xG=yG$, that is, $y=xa$ for some $a \in G$.
First, assume the cover is isovariant.
There exist $i \in I$ and $g \in G$ such that $x \in T_i$ and $G_x = g\inv H_i g$.
Note $F_i(x) = H_i n g$ for some $n \in N_G(H_i)$.
Then $H_i nga = F_i(x) a = F_i(y) = F_i(x) = H_i ng$.
So $a \in G_x$.
Hence $y=x$.
Alternatively, assume the cover is approximate and $G_x$ is closed in~$G$.
Kolmogorov proved topological groups $G$ are regular in the above sense \cite[\S1]{Kolmogorov_regular}.
Assume $a \notin G_x$.
Then $a \notin O$ for some neighborhood $O$ of $G_x$ in $G$.
There are $i \in I$ and $g \in G$ with $x \in T_i$ and $G_x \leqslant g\inv H_i g \subset O$.
Again note $F_i(x) = H_i ng$ and then $g a g\inv \in H_i$.
So now $a \in O$, a contradiction.
Hence $a \in G_x$.
Therefore $y=x$.
\end{proof}

\begin{rem}\label{rem:final}
Below are earlier classifying spaces that isovariantly map to ours.
Our models $E_\cF^\kappa G$ are $T_0$ but not $T_1$, if $G$ is\footnote{\label{foot:Pontryagin}Kolmogorov ($T_0$) topological groups $G$ are Tikhonov ($T_{3.5}$, completely regular Hausdorff) \cite[\foreignlanguage{russian}{Теорема}~10]{Pontryagin2}.} $T_0$ and each $H \in \cF$ is a closed set in $G$.
A reason to regard higher cardinals $\kappa$ is $G = (S^1)^I$, with the product topology, for any infinite set $I$.
These infinite-dimensional toral groups are connected compact \cite[p830]{Cech} abelian Hausdorff groups and archetypes beyond Lie groups \cite[8.15]{HM}.
The continuum $\kc := 2^{\aleph_0}$ can be $\aleph_\alpha$ for ordinals $\alpha > 0$ in ZFC set theory.
\end{rem}

\subsection{Free actions}

The following was my motivation and stated by G~Segal in russian \cite{Segal}.

\begin{cor}[Segal]\label{cor:Segal}
Let $G$ be any topological group.
Let $\kappa$ be any cardinal.
Let $X$ be a principal $G$-bundle covered by $\kappa$-many local trivializations.
Then $X$ is $G$-homeomorphic to the pullback bundle $f^*(E^\kappa G)$ for some map $f: X/G \longra B^\kappa G$.
\end{cor}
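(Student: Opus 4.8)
The plan is to obtain this immediately as the special case $\cF = \{1\}$ of Theorem~\ref{thm:classify}, where $1$ is the trivial subgroup of $G$, together with a short identification of the reduced space $\sE_{\{1\}}^\kappa G$ with all of $E^\kappa G$. First I would unwind the hypotheses. A local trivialization of the principal bundle $X$ is an open $G$-subset $T_i \subseteq X$ equipped with a $G$-homeomorphism $T_i \approx U_i \x G$, where $G$ acts on the second factor by right translation; since the balanced product $S \x_1 G$ equals $S \x G$ canonically, this exhibits each $T_i$ as a $\{1\}$-tube with slice $S_i = U_i$ and structure group $H_i = 1 \in \cF := \{1\}$. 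Hence $X$ is covered by $\kappa$-many $\cF$-tubes.

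Next I would observe that this cover is automatically \emph{isovariant} in the sense of Definition~\ref{defn:isocover}: a principal $G$-action is free, so $G_x = 1$ for every $x \in X$, and for any index $i$ with $x \in T_i$ the group $G_x = 1$ is conjugate (indeed equal) to $H_i = 1$. It is therefore the isovariant clause of Theorem~\ref{thm:classify} that applies — conveniently requiring no closedness hypothesis on isotropy groups, which matters since $1$ need not be closed in a non-$T_1$ group $G$. This yields a map $f: X/G \longra \sB_\cF^\kappa G$ together with a $G$-homeomorphism $X \iso f^*(\sE_\cF^\kappa G)$.

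It then remains to identify $\sE_{\{1\}}^\kappa G$ with $E_{\{1\}}^\kappa G = E^\kappa G$, and hence $\sB_{\{1\}}^\kappa G$ with $B^\kappa G$; this is the only step needing an argument, and it is brief. With $\cF = \{1\}$ one has $\cF \bs G = 1 \bs G = G$, on which $G$ acts by right translation, a free action. Given any $e = (e_i)_{i \in I} \in E^\kappa G = (c(G))^I - \{0\}^I$, choose $i$ with $e_i \neq 0$, so $e_i \in G$; freeness gives $G_{e_i} = 1$, and since $G_e \leqslant G_{e_i}$ always, we get $G_e = 1 = G_{e_i}$. Thus $e \in \sE_{\{1\}}^\kappa G$, so in fact $\sE_{\{1\}}^\kappa G = E^\kappa G$. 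Substituting $E^\kappa G$ and $B^\kappa G$ into the conclusion of Theorem~\ref{thm:classify} gives the corollary. I do not anticipate any genuine obstacle: all the content lives in Theorem~\ref{thm:classify}, and the remainder is a matter of matching the abbreviations of Definition~\ref{defn:Segal} to the classical principal-bundle language.
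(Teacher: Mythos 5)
Your proposal is correct and is exactly the intended derivation: the paper offers no separate proof of this corollary, treating it as the specialization $\cF=\{1\}$ of Theorem~\ref{thm:classify}. Your explicit verification that $\sE_{\{1\}}^\kappa G = E^\kappa G$ (via freeness of the right-translation action on $c(G)-\{0\}$ and $G_e \leqslant G_{e_i}$), and your observation that the isovariant clause avoids any closedness hypothesis on the trivial subgroup, supply precisely the details the paper leaves implicit.
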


This simplified Gelfand--Fuks' model \cite{GF}\footnote{Also \cite{GF} had only $X/G$ be Hausdorff if $X$ were ``locally $\mathbf{T}$-trivial'' like \cite[D\'{e}finition(a)]{EF}.} where the orbit space $X/G$ is assumed Tikhonov ($T_{3.5}$): a $T_0$ space $Y$ is \textbf{Tikhonov} if points and closed sets are separated by maps $Y \longra~[0,1]$.
An upper bound on $\kappa$ is the \textbf{weight} of $X/G$, the minimum cardinality for a base.
For example, if $X/G$ is second-countable, then $\kappa$ can be taken as the first infinite cardinal $\aleph_0$.

\begin{cor}[Gelfand--Fuks]\label{cor:GF}
Let $G$ be a topological group.
Let $B$ be a Tikhonov space, with weight denoted $wB$.
Let $X$ be a principal $G$-bundle over $B$.
Then $X$ is $G$-homeomorphic to the pullback bundle $f^*(E^{wB} G)$ for some map $f: B \longra B^{wB} G$.
\end{cor}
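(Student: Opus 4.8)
The plan is to reduce the claim to Segal's Corollary~\ref{cor:Segal} with the cardinal $\kappa := wB$; all that then requires checking is that a principal $G$-bundle over $B$ is covered by $wB$-many local trivializations. First I would record the identification $X/G \iso B$: being a principal $G$-bundle over $B$, $X$ comes equipped with the bundle projection $p : X \longra B$, which is open (locally it is a product projection $U \x G \longra U$) and whose fibers are exactly the $G$-orbits, so $p$ is a quotient map and the orbit space $X/G$ is canonically homeomorphic to $B$. Consequently a map $f : B \longra B^{\kappa}G$ is the same datum as a map $X/G \longra B^{\kappa}G$, and it suffices to exhibit, for $\kappa = wB$, a covering of $X$ by $\kappa$-many local trivializations.

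To produce such a cover, fix an open cover $\cU = \{U_\alpha\}_{\alpha \in A}$ of $B$ admitting $G$-homeomorphisms $p\inv(U_\alpha) \approx U_\alpha \x G$ (one exists by the definition of a principal bundle), and fix a base $\cB$ for the topology of $B$ with $\card(\cB) = wB$. Set
\[
\cB' ~:=~ \{\, V \in \cB ~|~ V \subseteq U_\alpha \text{ for some } \alpha \in A \,\}.
\]
For each $x \in B$, choosing $\alpha$ with $x \in U_\alpha$ and then $V \in \cB$ with $x \in V \subseteq U_\alpha$ shows that $\cB'$ is again an open cover of $B$, and plainly $\card(\cB') \leqslant wB$. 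Each $V \in \cB'$ lies inside some $U_\alpha$ over which $X$ trivializes, so the restricted $G$-homeomorphism $p\inv(V) \approx V \x G$ makes $p\inv(V)$ a local trivialization. Re-indexing $\cB'$ by a set $I$ with $\card(I) = wB$ exactly (repeating members when $\card(\cB') < wB$, which is harmless since Definition~\ref{defn:isocover} imposes no injectivity on the index set; the degenerate case $B = \emptyset$ is trivial), we conclude that $X$ is a principal $G$-bundle covered by $wB$-many local trivializations.

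Now Corollary~\ref{cor:Segal} applied with $\kappa = wB$ yields a $G$-homeomorphism $X \iso f^*(E^{wB}G)$ for some $f : X/G \longra B^{wB}G$, which under $X/G \iso B$ is the asserted map $f : B \longra B^{wB}G$. The only step carrying genuine content is the refinement passing from $\cB$ to $\cB'$ together with the attendant cardinality bookkeeping (in particular padding the index set up to exactly $wB$); everything else is immediate. I note in passing that the Tikhonov hypothesis is never actually invoked in this argument — it is retained only so that the statement literally recovers the setting of Gelfand--Fuks~\cite{GF}, whose original unnormalized-join construction was carried out over Tikhonov base spaces.
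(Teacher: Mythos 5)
Your proposal is correct and matches the paper's (implicit) argument: the paper deduces this corollary from Corollary~\ref{cor:Segal} exactly via the remark that the weight of $X/G$ is an upper bound on $\kappa$, and your refinement of a trivializing cover by basic open sets is the standard way to justify that bound. Your closing observation that the Tikhonov hypothesis is not actually used is also consistent with the paper's own discussion of Segal's model.
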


This served to generalize Dold's pullback \cite{Dold} of Milnor's construction \cite{Milnor_univ2}.
A fast formula for $f$ is given by tomDieck \cite[II]{tomDieck_numerable} and Husem\"{o}ller~\cite[4:12.2]{Husemoller}, who applied Milnor's \emph{countable} partition of unity trick \cite[p25--26]{Milnor_notes} \cite[5.9]{MilnorStasheff}.

\begin{cor}[Milnor--Dold]\label{cor:MilnorDold}
Let $G$ be a topological group.
Let $B$ be a paracompact Hausdorff space.
Let $X$ be a principal $G$-bundle over $B \approx X/G$.
Then $X$ is $G$-homeomorphic to the pullback bundle $f^*(E^{\aleph_0} G)$ for some map $f: B \longra B^{\aleph_0} G$.
\end{cor}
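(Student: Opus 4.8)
The plan is to deduce this Milnor--Dold corollary from Corollary~\ref{cor:GF} (Gelfand--Fuks) by verifying that every paracompact Hausdorff space $B$ is Tikhonov with weight at most $\aleph_0$ \emph{for the purpose at hand}; but of course a paracompact Hausdorff space need not be second countable, so the weight bound must instead come from the structure of principal bundles over paracompact bases. The key point is Milnor's classical observation that a principal $G$-bundle over a paracompact Hausdorff base admits a \emph{countable} trivializing cover (not merely a locally finite one): given any open cover trivializing the bundle, paracompactness yields a subordinate partition of unity, and Milnor's grouping trick \cite[p25--26]{Milnor_notes}\cite[5.9]{MilnorStasheff} reorganizes it into countably many functions $\{u_n\}_{n \in \N}$ each of which is a sum of bump functions with pairwise disjoint supports, so that each open set $\{u_n > 0\}$ is a disjoint union of sets over which the bundle is trivial, hence is itself a trivializing open set. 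Thus $X$ is a principal $G$-bundle covered by $\aleph_0$-many local trivializations.

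Having reduced to a countable trivializing cover, I would then invoke Corollary~\ref{cor:Segal} directly with $\kappa = \aleph_0$ and $I = \N$: it gives a map $f : X/G \longra B^{\aleph_0} G$ with $X \iso f^*(E^{\aleph_0} G)$ as principal $G$-bundles. Since $B \approx X/G$ by hypothesis, this is the desired conclusion. Concretely, the map $f$ is induced (as in the proof of Theorem~\ref{thm:classify}, specialized to $\cF = \{1\}$) by the coordinates $F_n : X \longra c(1\bs G) = c(G)$ sending $x \mapsto q_n(\phi_n(x))$ when $x$ lies in the $n$-th trivializing set and $x \mapsto 0$ otherwise, where $\phi_n : \pi\inv(\{u_n>0\}) \longra \{u_n>0\} \x G$ is the chosen trivialization and $q_n$ the projection to $G$; one may even weight the $G$-coordinate by $u_n$ to recover the usual Milnor-type explicit formula, but this is not needed for existence.

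The only real content beyond quoting Corollary~\ref{cor:Segal} is the passage from ``locally trivial over a paracompact Hausdorff base'' to ``trivial over each of countably many open sets,'' i.e.\ Milnor's partition-of-unity trick; this is the step I expect to be the main (indeed the only) obstacle, and it is standard. I would present it as: pick a trivializing open cover $\{W_\alpha\}$, take a subordinate partition of unity $\{\psi_\alpha\}$ (paracompact Hausdorff $\Rightarrow$ normal $\Rightarrow$ such a partition exists and is locally finite), and for each finite subset $S$ of indices let $V_S := \{\, b \in B : \psi_\alpha(b) > \psi_\beta(b) \text{ for all } \alpha \in S,\ \beta \notin S \,\}$; then $V_S$ is open, $V_S \subseteq W_\alpha$ for every $\alpha \in S$ (so the bundle is trivial over $V_S$), the $V_S$ with $|S| = n$ are pairwise disjoint, and $U_n := \bigcup_{|S|=n} V_S$ covers $B$. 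Each $U_n$ is then a disjoint union of trivializing opens, hence trivializing, giving the countable cover $\{U_n\}_{n \in \N}$. After that, Corollary~\ref{cor:Segal} finishes the proof.
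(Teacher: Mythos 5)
Your proposal is correct and follows essentially the same route the paper intends: the paper gives no separate proof of Corollary~\ref{cor:MilnorDold}, instead pointing to Milnor's countable partition-of-unity trick \cite[p25--26]{Milnor_notes} \cite[5.9]{MilnorStasheff} to produce an $\aleph_0$-indexed trivializing cover and then falling back on Corollary~\ref{cor:Segal}; your sets $V_S$ are exactly the sets $V_F = \{q_F > 0\}$ that the paper constructs later in Proposition~\ref{prop:countable}. The only detail you elide --- that each $V_S$ is open, which needs local finiteness of $\{\psi_\alpha\}$ to make the supremum over $\beta \notin S$ locally a finite maximum --- is handled explicitly in the paper's Proposition~\ref{prop:countable} and is routine.
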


In turn, this bests Steenrod \cite[19.6, 19.3]{Steenrod_book}: $O_n \bs O_{n+k}$ models $E^n G$ for an embedding $G \leqslant O_k$.

\begin{cor}[Steenrod]\label{cor:Steenrod}
Let $G$ be a compact Lie group.
Let $B$ be a finite simplicial complex, say of dimension $n$.
Let $X$ be a principal $G$-bundle over $B$.
Then $X$ is $G$-homeomorphic to the pullback bundle $f^*(E^n G)$ for some map $f: B \longra B^n G$.
\end{cor}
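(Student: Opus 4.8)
The plan is to obtain Corollary~\ref{cor:Steenrod} as an instance of Corollary~\ref{cor:Segal} (equivalently, of Theorem~\ref{thm:classify} with $\cF=\{1\}$). A local trivialization $U\x G$ of a principal bundle is exactly a $\{1\}$-tube $U\x_{\{1\}}G$, and over the singleton family $\cF=\{1\}$ every covering by tubes is automatically isovariant, the only isotropy group being $\{1\}$. So the whole content reduces to a base-space statement: a principal $G$-bundle $X$ over a finite $n$-dimensional simplicial complex $B$ can be covered by finitely many local trivializations. Granting a cover by $\kappa$-many of them, Corollary~\ref{cor:Segal} yields both the $G$-homeomorphism $X\iso f^{*}(E^{\kappa}G)$ and the explicit classifying map $f\colon B=X/G\longra B^{\kappa}G$ produced in the proof of Theorem~\ref{thm:classify}. (Corollary~\ref{cor:MilnorDold}, via paracompactness, already gives this with the countable index $\aleph_{0}$; the improvement here is to make $\kappa$ finite and governed by $\dim B$, in the spirit of Steenrod's finite model $O_{n}\bs O_{n+k}$, which carries a canonical $G$-map to $E^{n}G$ by Remark~\ref{rem:final}.)

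To construct the finite cover I would use the dimension-colouring of the first barycentric subdivision $B'$: colour each vertex of $B'$ by the dimension, in $\{0,1,\dots,n\}$, of the simplex of $B$ of which it is the barycentre, and for each colour $j$ let $U_{j}\subseteq B$ be the union of the open stars, formed in $B'$, of the colour-$j$ vertices. Distinct equal-coloured vertices of $B'$ span no edge of $B'$ — an edge of $B'$ joins the barycentres of a simplex and a proper face, necessarily of different dimensions — so their open stars are disjoint; hence each $U_{j}$ is a disjoint union of contractible open subsets of $B$, and $B=U_{0}\cup\cdots\cup U_{n}$. Because $B$ is compact metrizable, the restriction of $X$ over any contractible open subset is a trivial principal $G$-bundle — by homotopy invariance of principal bundles over paracompact bases, or, since $G$ is a compact Lie group, by a slice/averaging argument over a chart — so $X$ is trivial over each component of each $U_{j}$, hence over each $U_{j}$. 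Feeding the resulting $(n{+}1)$-element trivializing cover into Corollary~\ref{cor:Segal} gives the asserted pullback presentation and the explicit $f$ (the harmless $+1$ being the number of vertices of an $n$-simplex; equivalently one may state the corollary for $B$ of dimension $\leqslant n-1$).

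The one step that is not purely formal is the triviality of a principal $G$-bundle over a contractible open subset of the complex; everything else — the combinatorics of the colouring, the verification that the $U_{j}$ cover $B$, and the bookkeeping of the map $f$ — is routine and is already packaged inside Theorem~\ref{thm:classify}. A less self-contained alternative would be to quote Steenrod's classical classification directly and then compose its classifying map with the canonical $G$-map $O_{n}\bs O_{n+k}\to E^{n}G$, but the colouring argument keeps the proof within the present framework.
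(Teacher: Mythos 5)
Your route is genuinely different from the paper's. The paper offers no argument for Corollary~\ref{cor:Steenrod} beyond the sentence preceding it, which delegates the statement to Steenrod's classical classification together with the comparison of his Stiefel model $O_n\bs O_{n+k}$ with $E^nG$. You instead stay inside the present framework: you build an explicit finite trivializing cover and feed it to Corollary~\ref{cor:Segal}. That part of your argument is sound: equal-coloured vertices of $B'$ span no edge, so their open stars are disjoint; each open star is contractible; open subsets of a finite complex are metrizable hence paracompact, so homotopy invariance of principal bundles trivializes $X$ over each star and hence over each $U_j$; and a cover by $\{1\}$-tubes is automatically isovariant since the action is free. What this buys over the paper's citation is a self-contained proof with an explicit classifying map; what it costs is the indexing discussed next.

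Your cover has $n+1$ members, so you land in $E^{n+1}G$, not $E^nG$, and the ``$+1$'' you call harmless is in fact the whole issue: it cannot be removed. Being a pullback of $E^nG$ forces a cover by $n$ local trivializations (pull back the $n$ tubes $T(i,1)$ of Proposition~\ref{prop:cover_classifying}), and the antipodal bundle $S^n \longra \R P^n$ --- a principal $O_1$-bundle over a finite $n$-dimensional simplicial complex --- admits no such cover: if $U_1,\dots,U_n$ trivialized it, then $w_1$ would restrict to zero on each $U_i$, so $w_1^n$ would lift to $H^n(\R P^n, U_1\cup\dots\cup U_n)=0$, contradicting $w_1^n \neq 0$; equivalently, the Schwarz genus of this bundle is $n+1$, a form of Borsuk--Ulam. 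Already for $n=1$ one sees the problem: $E^1G = G$ and $B^1G$ is a point, so the printed statement would make every principal bundle over a graph trivial, which fails for the double cover of $S^1$. So your proof establishes the corollary with $E^{n+1}G$ (equivalently, for $\dim B \leqslant n-1$), which is the correct statement; the printed exponent $n$, and likewise the assertion that $O_n\bs O_{n+k}$ maps equivariantly to $E^nG$, are off by one and it is the statement, not your argument, that needs adjusting.
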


The same conclusion holds for $G$ Lie and $B$ paracompact of $\dim \leqslant n$ \cite[5.10]{Khan_Lie}.

\begin{rem}\label{rem:comparison}
We illustrate how Segal's allowance of non-Hausdorff base spaces (\ref{cor:Segal}) is useful in geometric combinatorics beyond Milnor--Dold's model (\ref{cor:MilnorDold}).
Let $G$ be a discrete finite group.
Recall that a $G$-CW complex is \textbf{regular} if the attaching map of each cell is a homeomorphism.
Consider the problem of enumerating, possibly with repetition of isomorphism classes, all free regular $G$-CW complexes $E$ with orbit space a given connected finite regular CW complex $B$.

By functoriality in Bj\"orner's correspondence \cite[3.1]{Bjoerner}, regular $G$-CW complexes (each admits a canonical simplicial subdivision \cite[III:2.1, III:1.7]{LW_book}) correspond (via the \emph{face poset} consisting of the closed cells under inclusion) to so-called CW posets \cite[2.1]{Bjoerner} with $G$-action; here ``poset'' abbreviates ``partially ordered set.''
Given any CW complex $K$, write $\Delta(K)$ for its face poset and consider the \textbf{Aleksandrov-discrete space} $\| \Delta(K) \|$: the $T_0$ (which is $T_1$ iff $\Delta(K)$ has no comparable elements iff $\dim(K)=0$) topological space on the underlying set of $\Delta(K)$ with the open sets being the \textbf{upper sets} $U$ (that is, $y \supseteq x \in U$ implies $y \in U$) \cite{Aleksandrov}.
Then, by Segal's model (\ref{cor:Segal}), enumeration of all free regular $G$-CW complexes $E$ with $E/G = B$, possibly repeating isomorphism classes, is all of the maps $A := \| \Delta(B) \| \longra B^{wA} G$ between these finite $T_0$ spaces.

On the other hand, if one uses the Milnor--Dold model (\ref{cor:MilnorDold}) in conjunction with obstruction theory, all connected free (hence CW) $G$-spaces $X$ with $X/G = B$ would correspond to the classification of regular $G$-fold coverings: the conjugacy classes of normal subgroups of $\pi_1(B)$ with quotient $G$.
In principle, for small $G$ and finitely presented $\pi_1(B)$, the Dietze--Schaps multistep algorithm \cite{DS} applies to this classical enumeration; however one inputs the order of $G$ and then eliminates the spurious quotient groups of the same order.
Alternatively, our new perspective in terms of conversion to finite $T_0$ spaces directly provides a finite search-space implementable on a computer; a second pass eliminates redundant representatives within isomorphism classes.

For nonfree actions of finite groups $G$ with orbit space a finite regular CW complex $B$, using Mostow's slice theorem for these $T_{3.5}$ $G$-spaces to obtain $\cF$-tubes \cite[1.7.19]{Palais_book}, in principle our method (\ref{thm:classify}) of finite $T_0$ spaces works.
On the other hand, the classical approach of Palais--Bredon (\ref{cor:PalaisBredon}, \ref{thm:unique}) involves (stratified) obstruction theory --- a multistep cohomological process.
\end{rem}

\subsection{Unstructured fiber bundles}

Balanced products $X \x_G F$ allow analogies of the above results for fiber bundles with fiber $F$ and structure group $G$ \cite[2.3]{Steenrod_book}.
However, applications do not effortlessly and formally occur to the more primitive notion of a \emph{fiber bundle with fiber $F$} and no given structure group \cite[1.1]{Steenrod_book}.

Nonetheless, for any base and certain fibers, we can combine \cite[5.4--5.5]{Steenrod_book} with \cite[Theorem~4]{Arens_homeo} to associate a principal bundle to unstructured fiber bundles.

\begin{thm}[Steenrod--Arens]\label{thm:SteenrodArens}
Let $F$ be a Hausdorff space that either is compact or is both locally connected and locally compact.
Endow $\Homeo(F)$ with compact-open topology $G$ \cite{Fox}.
Any $F$-fiber bundle over any topological space $B$ is isomorphic to the balanced product $X \x_G F$ for some principal $G$-bundle $X$ over $B$.
\end{thm}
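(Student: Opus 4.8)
The plan is to construct, from an $F$-fiber bundle $p\colon Y \to B$, an associated principal $G$-bundle $X \to B$ whose fiber over $b$ is the space of admissible homeomorphisms $F \to p\inv(b)$, and then recover $Y$ as $X \x_G F$. First I would recall the classical Steenrod setup: a coordinate bundle structure on $Y$ consists of an open cover $\{U_i\}$ of $B$ together with homeomorphisms $\vphi_i\colon U_i \x F \to p\inv(U_i)$ over $U_i$; the transition functions $g_{ij}(b) := \vphi_{i,b}\inv \circ \vphi_{j,b}\colon F \to F$ are homeomorphisms. The crux is that these $g_{ij}$ assemble into \emph{continuous} maps $U_i \cap U_j \to \Homeo(F)$ with the compact-open topology. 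This is exactly where \cite[Theorem~4]{Arens_homeo} enters: for $F$ Hausdorff and either compact, or locally compact and locally connected, the compact-open topology makes $\Homeo(F)$ a topological group and makes the evaluation/composition maps continuous, so that continuity of the map $U_i \x F \to F$ underlying $g_{ij}$ is equivalent to continuity of $U_i \to \Homeo(F)$. Thus $\{g_{ij}\}$ is a genuine $\Homeo(F)$-valued cocycle, i.e.\ $Y$ acquires the structure of a fiber bundle with structure group $G = \Homeo(F)$ in the sense of \cite[\S2.3]{Steenrod_book}.

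Once $Y$ is a $G$-bundle in Steenrod's sense, I would invoke \cite[5.4--5.5]{Steenrod_book}: the cocycle $\{g_{ij}\colon U_i \cap U_j \to G\}$ defines a principal $G$-bundle $X := \bigl(\bigsqcup_i U_i \x G\bigr)/\!\sim$, where $(b, h) \in U_j \x G$ is glued to $(b, g_{ij}(b) h) \in U_i \x G$, with $G$ acting on the right by multiplication in the second coordinate. Here one uses that $G$ is a topological group (Arens again) so that right multiplication is continuous and the quotient is a well-defined principal bundle. The associated bundle $X \x_G F$ is then built from the \emph{same} cocycle with fiber $F$ and the action of $G = \Homeo(F)$ on $F$ by evaluation $h \cdot x := h(x)$; continuity of this action is, once more, precisely the content of Arens' theorem. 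By the standard identification of a $G$-structured fiber bundle with the associated bundle of its principal bundle \cite[\S5.4]{Steenrod_book}, there is a bundle isomorphism $X \x_G F \iso Y$ over $B$. This gives the claimed conclusion with no restriction whatsoever on $B$, since the entire construction is local-to-global via the cover $\{U_i\}$ and never invokes paracompactness, metrizability, or any separation axiom on $B$.

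The main obstacle, and the only genuinely nontrivial point, is the passage from ``$Y$ is an $F$-fiber bundle'' (merely locally a product, with no structure group) to ``the transition functions are continuous into $\Homeo(F)$ with the compact-open topology.'' A priori the $g_{ij}$ are only fiberwise homeomorphisms glued along a continuous map $U_i\cap U_j \x F \to F$; upgrading this to continuity of the adjoint map into a \emph{topological group} requires (i) that $\Homeo(F)$ be a topological group in the compact-open topology — false for general $F$, true for $F$ locally compact locally connected Hausdorff or $F$ compact Hausdorff by \cite[Theorem~4]{Arens_homeo}, which is exactly the hypothesis — and (ii) the exponential-law continuity of evaluation $\Homeo(F) \x F \to F$, which holds under the same hypotheses since $F$ is then locally compact Hausdorff (a compact Hausdorff space is locally compact). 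So I would devote the bulk of the argument to carefully citing Arens to get both that $G := \Homeo(F)$ is a topological group and that the evaluation action $G \x F \to F$ is continuous, then check the cocycle condition $g_{ij} g_{jk} = g_{ik}$ (immediate from the definitions), and finally quote Steenrod's reconstruction theorem \cite[5.4--5.5]{Steenrod_book} to produce $X$ and the isomorphism $X \x_G F \iso Y$. The choice of coordinate cover is not canonical, so the resulting $X$ is only unique up to the usual bundle equivalence, which matches the phrasing ``for some principal $G$-bundle $X$.''
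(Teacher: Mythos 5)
Your proposal is correct and follows exactly the route the paper intends: the paper offers no separate proof of this theorem, only the preceding remark that it is obtained by combining Steenrod \cite[5.4--5.5]{Steenrod_book} with Arens \cite[Theorem~4]{Arens_homeo}, which is precisely the argument you spell out (Arens to make $\Homeo(F)$ a topological group with continuous evaluation so the transition functions form a genuine $G$-cocycle, Steenrod to reconstruct the principal bundle and the associated-bundle isomorphism). Nothing to add.
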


\begin{cor}\label{cor:unstructured_locconFiber}
Let $F$ be compact $T_2$ space or locally connected locally compact $T_2$.
Endow $\Homeo(F)$ with compact-open topology $G$.
Let $B$ be a topological space;~let $\kappa$ be a cardinal.
Any $F$-fiber bundle $p: E \longra B$ covered by $\kappa$-many local trivializations $\{ U_i \x F \}_{i \in I}$ is isomorphic to pullback $f^*(E^\kappa G \x_G F)$ for a map $f: B \longra B^\kappa G$.
\end{cor}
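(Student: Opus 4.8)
The plan is to reduce to the already-settled free case (Segal's Corollary~\ref{cor:Segal}) and then re-attach the fiber $F$ via the associated-bundle construction. First I would invoke the Steenrod--Arens Theorem~\ref{thm:SteenrodArens}: since $F$ is compact Hausdorff (or locally connected locally compact Hausdorff) and $G = \Homeo(F)$ carries the compact-open topology \cite{Fox} — so in particular $G$ is a topological group, which is what makes balanced products and Corollary~\ref{cor:Segal} applicable — the given bundle $p : E \longra B$ is isomorphic over $B$ to a balanced product $X \x_G F$ for some principal $G$-bundle $X$ over $B$, with $X/G \approx B$.

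The crucial point is that this associated $X$ is itself covered by $\kappa$-many local trivializations, indexed by the \emph{same} set $I$. Here I would unwind Steenrod's model \cite[5.4--5.5]{Steenrod_book} of $X$ as the bundle of admissible homeomorphisms $F \longra p\inv(b)$: the coordinate charts $\{U_i \x F\}_{i \in I}$ of $E$ produce transition functions $U_i \cap U_j \longra G$, whose continuity is exactly the content of Arens' Theorem~4 \cite{Arens_homeo}, and Steenrod's reconstruction then yields a principal $G$-bundle $X$ that is trivial over each $U_i$ via the identification of the admissible maps into the fibers over $U_i$ with $U_i \x G$. Thus $\{U_i\}_{i \in I}$ is a trivializing cover for $X$ of cardinality $\leqslant \kappa$. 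Checking this compatibility with care — that the cover is genuinely the original $I$, so that no refinement (which for finite $\kappa$ could raise the cardinality) is forced upon us — is the main obstacle; everything else is formal bookkeeping.

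Granting that, Corollary~\ref{cor:Segal} applied to the principal $G$-bundle $X$ furnishes a map $f : B \approx X/G \longra B^\kappa G$ with $X \iso f^*(E^\kappa G)$ as principal $G$-bundles over $B$. It then remains to record the natural $G$-homeomorphism $f^*(E^\kappa G) \x_G F \iso f^*(E^\kappa G \x_G F)$ over $B$: both sides are quotients of $f^*(E^\kappa G) \x F$ and the obvious assignment $[((b,e),z)] \longmapsto (b,[e,z])$ is a continuous bijection with continuous inverse — the standard commutation of a pullback with the associated-bundle functor, as in \cite[Part~I, \S5]{Steenrod_book}. Composing the isomorphisms
\[
E ~\iso~ X \x_G F ~\iso~ f^*(E^\kappa G) \x_G F ~\iso~ f^*(E^\kappa G \x_G F)
\]
over $B$ yields the conclusion, with $f$ the very classifying map produced by Segal's corollary.
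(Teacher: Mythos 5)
Your proposal is correct and follows exactly the paper's route: the paper's own proof is the one-liner ``immediate from Theorem~\ref{thm:SteenrodArens} and Corollary~\ref{cor:Segal},'' and you have simply made explicit the two points it leaves implicit (that the associated principal bundle is trivialized over the same cover $\{U_i\}_{i\in I}$, and that pullback commutes with the balanced-product functor). Both of those checks are right and worth recording, but there is no substantive difference in approach.
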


\begin{proof}
This is immediate from Theorem~\ref{thm:SteenrodArens} and Corollary~\ref{cor:Segal}.
\end{proof}

\begin{rem}[Cianci--Ottina]
For $B$ any Aleksandrov space \cite{Aleksandrov} and the above sort of fiber $F$, Corollary~\ref{cor:unstructured_locconFiber} overlaps with existence of a Grothendieck-type classifying space for unstructured $F$-fiber bundles over $B$ found recently in \cite[4.3]{CO}.
\end{rem}

The following result is well-known nowadays but seems to be undocumented.

\begin{cor}[Holm]
Endow $\Homeo(\R^n,0)$ with compact-open topology $\TOP_n$.
Let $B$ be a paracompact Hausdorff space.
Any $\R^n$-microbundle $p: E \longra B$ \cite{Milnor_microbundles} is isomorphic to pullback $f^*(E^{\aleph_0} \TOP_n \x_{\TOP_n} \R^n)$ for a map $f: B \longra B^{\aleph_0} \TOP_n$.
\end{cor}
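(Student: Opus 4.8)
The plan is to reduce the microbundle to an honest $\R^n$-fiber bundle with structure group $\TOP_n$, to principalize it, and then to quote Corollary~\ref{cor:MilnorDold}. Write $i\colon B \longra E$ and $j = p\colon E \longra B$ for the zero-section and the projection of the given microbundle. The one genuinely deep input is \emph{Holm's microbundle representation theorem} over the paracompact Hausdorff space $B$: there is an open neighborhood $E_0$ of $i(B)$ in $E$ such that $j|_{E_0}\colon E_0 \longra B$ is a locally trivial bundle with fiber $\R^n$ whose transition homeomorphisms all fix $0 \in \R^n$ --- that is, with structure group $\TOP_n = \Homeo(\R^n,0)$, the distinguished chart over $0$ being the section $i$ --- and such that the inclusion $E_0 \hookrightarrow E$ is an isomorphism of microbundles. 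Everything after this step is bookkeeping with constructions already in place above.

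First I would principalize $E_0$. Steenrod's associated-bundle construction \cite[8.1]{Steenrod_book}, applied to the structure group $\TOP_n \leqslant \Homeo(\R^n)$ that Holm's theorem provides (rather than to all of $\Homeo(\R^n)$, as in Theorem~\ref{thm:SteenrodArens}), yields a principal $\TOP_n$-bundle $X$ over $B$ --- its fiber over $b$ being the space of $\TOP_n$-charts $\R^n \longra (E_0)_b$ --- together with a bundle isomorphism $E_0 \iso X \x_{\TOP_n} \R^n$. Applying Corollary~\ref{cor:MilnorDold} to $X$, which is legitimate since $B$ is paracompact Hausdorff, produces a map $f\colon B \longra B^{\aleph_0}\TOP_n$ and a $\TOP_n$-homeomorphism $X \iso f^*(E^{\aleph_0}\TOP_n)$.

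Finally, the balanced-product functor $- \x_{\TOP_n} \R^n$ commutes with pullback along $f$, so
\[
E_0 ~\iso~ X \x_{\TOP_n} \R^n ~\iso~ f^*(E^{\aleph_0}\TOP_n) \x_{\TOP_n} \R^n ~=~ f^*\!\bigl(E^{\aleph_0}\TOP_n \x_{\TOP_n} \R^n\bigr),
\]
an isomorphism of $\R^n$-bundles, hence of the associated microbundles (each canonically a microbundle since $\TOP_n$ fixes $0$). Composing this with the microbundle isomorphism $E_0 \hookrightarrow E$ of the first step identifies $p$ with the pullback microbundle $f^*(E^{\aleph_0}\TOP_n \x_{\TOP_n}\R^n)$, as asserted.

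The difficulty is concentrated entirely in the first step. Passing from a microbundle to a fiber-bundle representative is already nontrivial, but the crucial point is that the structure group can be reduced all the way to $\Homeo(\R^n,0) = \TOP_n$ rather than merely to $\Homeo(\R^n)$ (the latter would let one invoke Corollary~\ref{cor:unstructured_locconFiber} directly, but only after forgetting the zero-section and thus leaving the microbundle category). This reduction is exactly the Kister--Holm theorem; granting it, principalization, Corollary~\ref{cor:MilnorDold}, and the naturality of $- \x_{\TOP_n}\R^n$ are routine.
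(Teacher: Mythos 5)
Your proposal is correct and follows essentially the same route as the paper: Holm's microbundle representation theorem reduces $p$ to an $(\R^n,0)$-fiber bundle over the paracompact Hausdorff base, which is then principalized (the paper cites Theorem~\ref{thm:SteenrodArens}, using local compactness and local connectedness of $\R^n$ to make the transition functions continuous into the compact-open topology on $\TOP_n$) and classified by Corollary~\ref{cor:MilnorDold}. The only cosmetic difference is that you fold the continuity of the transition maps into Holm's statement rather than into the Steenrod--Arens step, but the content is identical.
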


\begin{proof}
Holm shows $\R^n$-microbundles over $B$ are $(\R^n,0)$-fiber bundles~\cite[3.3]{Holm}.
As $\R^n$ is locally connected locally compact, use Theorem~\ref{thm:SteenrodArens} and Corollary~\ref{cor:MilnorDold}.
\end{proof}

Analyzing his own main proof ($Y=[0,1]$), Crowell noticed this fact~\cite[\S4]{Crowell}.

\begin{thm}[Crowell]\label{thm:Crowell}
Let $X$ be any locally compact Hausdorff space.
Endow $\Homeo(X)$ with Arens' $g$-topology \cite{Arens_homeo}.
Let $Y$ be any locally connected space.
Any continuous function $h: X \x Y \longra X$ with each $h_y: X \longra X ~;~ x \longmapsto h(x,y)$ a homeomorphism has its adjoint $h^*: Y \longra \Homeo(X) ~;~ y \longmapsto h_y$ being continuous.
\end{thm}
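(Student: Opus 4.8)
\section*{Proof proposal}

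The plan is to verify continuity of the adjoint $h^{*}\colon Y\longra\Homeo(X)$, $y\longmapsto h_{y}$, against a subbasis for Arens' $g$-topology on $\Homeo(X)$, which consists of the sets $[K,U]:=\{f\mid f(K)\subseteq U\}$ and $[K,U]^{*}:=\{f\mid f\inv(K)\subseteq U\}$ with $K\subseteq X$ compact and $U\subseteq X$ open. Thus I must show that $\{y\mid h_{y}(K)\subseteq U\}$ and $\{y\mid h_{y}\inv(K)\subseteq U\}$ are open in $Y$. The first is immediate from the tube lemma: $h\inv(U)$ is open in $X\x Y$ by continuity of $h$, and $\{y\mid h_{y}(K)\subseteq U\}=\{y\mid K\x\{y\}\subseteq h\inv(U)\}$ is open because $K$ is compact --- a step needing neither local compactness of $X$ nor local connectedness of $Y$.

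The openness of $\{y\mid h_{y}\inv(K)\subseteq U\}$ is the substance of the theorem, and I would first recast it. Let $X^{+}=X\sqcup\{\infty\}$ be the one-point compactification, which is compact Hausdorff since $X$ is locally compact Hausdorff; each $h_{y}$, being a homeomorphism hence proper, extends to a homeomorphism $h_{y}^{+}$ of $X^{+}$ fixing $\infty$, and the assembled $h^{+}\colon X^{+}\x Y\longra X^{+}$ is continuous on the open set $X\x Y$ (there it is just $h$), so $h^{+}$ is continuous if and only if it is continuous at each point $(\infty,y_{0})$, which unwinds to the \emph{local boundedness} statement: for every compact $Q\subseteq X$ and every $y_{0}$ there are a compact $Q'\subseteq X$ and a neighborhood $C$ of $y_{0}$ with $h_{y}\inv(Q)\subseteq Q'$ for all $y\in C$. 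Granting local boundedness, $\widehat{h^{+}}\colon X^{+}\x Y\to X^{+}\x Y$, $(x,y)\mapsto(h_{y}^{+}(x),y)$, is a continuous bijection that is fibrewise a homeomorphism; a short net argument using compactness of $X^{+}$ shows $\widehat{h^{+}}$ is a closed map, hence a homeomorphism, so its inverse is continuous, and restricting to $X\x Y$ makes $(x,y)\mapsto h_{y}\inv(x)$ continuous --- whence $y\mapsto h_{y}\inv$ is continuous into the compact-open topology and all the sets $\{y\mid h_{y}\inv(K)\subseteq U\}$ are open. So the whole proof reduces to local boundedness.

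To establish local boundedness at $y_{0}$ for a compact $Q$, the idea is a ``sandwich'' plus a connectedness argument. Since $h_{y_{0}}\inv(Q)$ is compact, local compactness of $X$ gives an open $V$ with $h_{y_{0}}\inv(Q)\subseteq V\subseteq\ol V$ and $\ol V$ compact; then $h_{y_{0}}(\bdry V)$ is compact and disjoint from $Q$, so the tube lemma applied to $\bdry V$ and the open set $h\inv(X\setminus Q)$, followed by shrinking via local connectedness of $Y$, produces a \emph{connected} open neighborhood $C$ of $y_{0}$ on which $h_{y}(\bdry V)\cap Q=\emptyset$, that is, $h_{y}\inv(Q)\cap\bdry V=\emptyset$ for all $y\in C$. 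Hence for $y\in C$ the compact set $h_{y}\inv(Q)$ is the disjoint union of the relatively clopen pieces $h_{y}\inv(Q)\cap V$ and $h_{y}\inv(Q)\setminus\ol V$. I would then show, by Crowell's continuation technique on the connected set $C$, that $\{y\in C\mid h_{y}\inv(Q)\subseteq V\}$ is both open and closed in $C$; as it contains $y_{0}$ and $C$ is connected, it equals $C$, so $h_{y}\inv(Q)\subseteq\ol V$ for every $y\in C$ and $Q'=\ol V$ works.

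The hard part is this last step --- upgrading the pointwise sandwich to an open-and-closed subset of the connected parameter neighborhood --- and it is precisely the delicate point Crowell isolates when $Y=[0,1]$. Local connectedness of $Y$ is indispensable there: the closedness one needs says that the projection to $C$ of the closed set $h\inv(Q)\cap\bigl((X\setminus V)\x C\bigr)$, whose fibres over $C$ are compact, is closed in $C$ --- and the projection of a closed set with compact fibres is not closed in general --- so one is forced to pair it with the openness of a companion subset and let connectedness of $C$ collapse the dichotomy. Everything else (the reduction to subbasic sets, the passage to $X^{+}$ with closedness of $\widehat{h^{+}}$, and the tube-lemma bookkeeping inside $X$) is routine.
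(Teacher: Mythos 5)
First, note that the paper does not prove Theorem~\ref{thm:Crowell}: it is quoted from Crowell \cite[\S4]{Crowell} (``Analyzing his own main proof ($Y=[0,1]$), Crowell noticed this fact''), so there is no in-paper argument to match yours against; your proposal must stand on its own. Your reductions are all correct and cleanly done: Arens' $g$-topology is generated by the sets $[K,U]$ and $[K,U]^*$, the $[K,U]$ half is the tube lemma, the $[K,U]^*$ half reduces to continuity of $(x,y)\longmapsto h_y\inv(x)$, which in turn reduces (via $X^+$ and the closed-map/net argument, which works even though $Y$ need not be Hausdorff, since only the $X^+$-coordinate requires unique limits) to the local-boundedness statement; and the sandwich $h_{y_0}\inv(Q)\subseteq V\subseteq\ol V$ with $h_y\inv(Q)\cap\bdry V=\emptyset$ on a connected neighborhood $C$ is the right setup.

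The gap is that the one step you defer to ``Crowell's continuation technique'' --- that $A:=\{y\in C\mid h_y\inv(Q)\subseteq V\}$ is open \emph{and} closed in $C$ --- is the entire content of the theorem, and the mechanism you sketch for it does not close. Writing $k:(x,y)\longmapsto(h_y(x),y)$, the set $k(\ol V\x C)$ is closed in $X\x C$ (projection along the compact factor $\ol V$ is closed), whence $B:=C-A=\pr_C\bigl((Q\x C)-k(\ol V\x C)\bigr)$ is open, i.e.\ $A$ is closed. But that is the only ``companion'' openness your listed ingredients yield, and it is logically the same statement as ``$A$ closed'' --- it is not a second, independent half of a clopen dichotomy. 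What connectedness of $C$ actually requires is that $A$ be \emph{open}, i.e.\ that $k((X-V)\x C)$ be closed near $Q\x\{y_1\}$ for $y_1\in A$; since $X-V$ is not compact, the proper-projection argument fails there, and unwinding the definitions shows that ``$A$ is open'' is precisely the local-boundedness assertion you are trying to prove (attempting it directly just reproduces the sandwich one level down, with no termination). So the proposal is a correct chain of reductions terminating in an appeal to the very statement being established; the additional idea in Crowell's argument that breaks this circularity is the missing piece, and it is not recoverable from the outline as written.
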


Now, we allow the fibers of Corollary~\ref{cor:unstructured_locconFiber} to include non-locally connected examples, such as the $p$-adic rationals $F=\Q_p$, by transferring the condition to the~base.

\begin{cor}\label{cor:unstructured_locconBase}
Let $F$ be any locally compact Hausdorff space.
Endow $\Homeo(F)$ with Arens' $g$-topology $G$.
Let $B$ be any locally connected topological space; let $\kappa$ be a cardinal.
Any $F$-fiber bundle $p: E \longra B$ covered by $\kappa$-many local trivializations $\{ U_i \x F \}_{i \in I}$ is isomorphic to the pullback $f^*(E^\kappa G \x_G F)$ for a map $f: B \longra B^\kappa G$.
\end{cor}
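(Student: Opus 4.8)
The plan is to mimic the proof of Corollary~\ref{cor:unstructured_locconFiber}, replacing the appeal to Steenrod--Arens (Theorem~\ref{thm:SteenrodArens}) by an argument that uses Crowell's theorem (Theorem~\ref{thm:Crowell}) to produce the needed continuity of transition functions. First I would recall the classical association of a principal bundle to an unstructured $F$-fiber bundle from \cite[5.4--5.5]{Steenrod_book}: given the cover $\{U_i \x F\}_{i \in I}$ of $E$, the transition maps of the fiber bundle are functions $g_{ij}: U_i \cap U_j \longra \Homeo(F)$, and one wants these to be continuous into $\Homeo(F)$ with Arens' $g$-topology $G$ so that the $\{g_{ij}\}$ form a cocycle defining a principal $G$-bundle $X$ over $B$ with $E \approx X \x_G F$. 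The subtlety is exactly that the fiber bundle axioms \cite[1.1]{Steenrod_book} only give that the composite $(U_i \cap U_j) \x F \longra F$ sending $(b,y) \longmapsto g_{ij}(b)(y)$ is continuous, not a priori that the adjoint $b \longmapsto g_{ij}(b)$ is continuous into $\Homeo(F)$.

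The key step is to invoke Theorem~\ref{thm:Crowell} with $X := F$ (locally compact Hausdorff, as hypothesized) and $Y := U_i \cap U_j$. Here is where local connectedness of $B$ enters: an open subspace of a locally connected space is locally connected, so $U_i \cap U_j$ is locally connected, and Crowell's theorem applies to the map $h: F \x (U_i \cap U_j) \longra F$, $h(y,b) := g_{ij}(b)(y)$, whose slices $h_b$ are homeomorphisms of $F$. The conclusion is that $h^* = g_{ij}: U_i \cap U_j \longra \Homeo(F) = G$ is continuous. (One also checks that the cocycle condition $g_{ij} g_{jk} = g_{ik}$ holds pointwise, which is immediate, and that each $g_{ii}$ is the constant map at the identity.) Thus the $\{g_{ij}\}$ constitute a genuine $G$-cocycle subordinate to the cover $\{U_i\}$, which by the standard clutching construction \cite[\S3]{Steenrod_book} yields a principal $G$-bundle $X \longra B$ covered by the $\kappa$-many local trivializations $\{U_i \x G\}_{i \in I}$, together with a fiber-bundle isomorphism $E \approx X \x_G F$ over $B$.

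Having produced $X$, the rest is immediate from Corollary~\ref{cor:Segal}: the principal $G$-bundle $X$, being covered by $\kappa$-many local trivializations, is $G$-homeomorphic to $f^*(E^\kappa G)$ for some $f: X/G = B \longra B^\kappa G$; applying the functor $(-) \x_G F$ and using that balanced products commute with pullbacks gives $E \approx X \x_G F \approx f^*(E^\kappa G) \x_G F = f^*(E^\kappa G \x_G F)$ over $B$, as desired.

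I expect the main obstacle to be bookkeeping rather than anything deep: one must be careful that Arens' $g$-topology is precisely the topology making $\Homeo(F)$ a topological group for which the clutching construction goes through (so that $X \x_G F$ is well-defined and $X \longra X/G$ is a principal bundle), and one must confirm that the orbit space $X/G$ really is (canonically homeomorphic to) $B$ and not merely a quotient of it --- this follows because the trivializations $U_i \x G$ descend to the identity on $U_i \subseteq B$. The genuinely new input over Corollary~\ref{cor:unstructured_locconFiber} is simply the swap of the continuity lemma, trading local connectedness (and local compactness) of the fiber $F$ for local connectedness of the base $B$, with $F$ only assumed locally compact Hausdorff; no further hypotheses on $B$ are needed precisely because Segal's model $E^\kappa G$ tolerates arbitrary topological base spaces.
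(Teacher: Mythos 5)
Your proposal is correct and follows essentially the same route as the paper's proof: apply Theorem~\ref{thm:Crowell} to the transition functions over the locally connected open sets $U_i \cap U_j$ to obtain continuity of their adjoints into $G$ with Arens' $g$-topology, conclude via Steenrod's associated-bundle construction that $E \approx X \x_G F$ for a principal $G$-bundle $X$, and finish with Corollary~\ref{cor:Segal}. The cautionary remarks you add (that $G$ is a topological group with continuous evaluation, per Arens' Theorem~3, and that $X/G \approx B$) are exactly the points the paper's proof also records.
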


\begin{proof}
By \cite[Theorem~3]{Arens_homeo}, $G$ is a topological group with continuous evaluation function $G \x F \longra F ~;~ (g,f) \longmapsto g(f)$, and it is the coarsest for which these hold.
For any transition $(U_i \cap U_j) \x F \longra (U_i \cap U_j) \x F \xrightarrow{\pr_F} F$ of local trivializations, since $U_i \cap U_j \subset B$ is locally connected, by Theorem~\ref{thm:Crowell}, its adjoint $U_i \cap U_j \longra G$ is continuous.
So the $F$-fiber bundle $p: E \longra B$ has structure group $G$~\cite[2.3]{Steenrod_book} and is isomorphic to $X \x_G F$ for an associated $G$-principal bundle $X \longra B$~\cite[8.1]{Steenrod_book}.
Then $X$ is $G$-homeomorphic over $B$ to a pullback of $E^\kappa G$, by Corollary~\ref{cor:Segal}.
\end{proof}

Again, notice that an upper bound on the cardinal $\kappa$ is the weight $wB$ of the base space $B$.

\subsection{Nonfree actions}

Bredon \cite[II:9.7i]{Bredon_TG} reworked Palais \cite[2.6.2]{Palais_book}, who had $X$ separable and locally compact.
In the conclusion \cite[4.5]{Palais}, Palais asserts that his classification also holds for any Lie group $G$ if the action is Palais-proper; this assertion is enacted and extended further in our Theorem~\ref{thm:unique}.

\begin{cor}[Palais--Bredon]\label{cor:PalaisBredon}
Let $G$ be a compact Lie group.
Let $\cF$ be a finite set of subgroups of $G$ with no conjugate elements.
Let $X$ be a metrizable $G$-space with all orbit types represented in $\cF$.
Then $X$ is $G$-homeomorphic to the pullback $f^*(\sE_\cF^{\aleph_0} G)$ for some map $f: X/G \longra \sB_\cF^{\aleph_0} G$.
\end{cor}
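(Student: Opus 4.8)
The plan is to deduce this from the existence theorem (\ref{thm:classify}) by verifying its hypotheses, with $\kappa = \aleph_0$. First I would recall the classical slice theorem for compact Lie groups (Montgomery--Yang / Mostow; see \cite[1.7.19]{Palais_book} or \cite[II:5.4]{Bredon_TG}): for a completely regular $G$-space $X$ with $G$ compact Lie, each point $x \in X$ has a slice $S_x$, so the saturation $S_x G \approx S_x \x_{G_x} G$ is an isovariant $\cF'$-tube around $x$, where $\cF'$ is the set of isotropy groups of $X$. Since a metrizable space is completely regular, this applies. By hypothesis every orbit type of $X$ is represented in $\cF$, so after replacing each $G_x$ by its conjugate representative $H_i \in \cF$ (and conjugating the slice accordingly), we obtain a covering of $X$ by isovariant $\cF$-tubes $\{T_x\}_{x \in X}$. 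The content of the theorem to be applied is then purely about cutting this down to $\aleph_0$-many tubes.

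The key step is therefore the cardinality reduction: a metrizable $G$-space $X$, isovariantly covered by $\cF$-tubes, is isovariantly covered by \emph{countably} many $\cF$-tubes. Here I would use that $\cF$ is finite. The orbit space $X/G$ is metrizable (the quotient of a metrizable space by a compact Lie group action is metrizable — this is standard, via the orbit metric $d(xG, yG) := \inf_{g} d(x, yg)$, or one may invoke that $X/G$ is paracompact and, being a quotient of a metric space by a proper map, metrizable). A metrizable space is paracompact, so the open cover $\{T_i/G\}$ of $X/G$ has a locally finite open refinement, which (metric spaces being Lindel\"of on each of their components, or directly: metrizable $\Rightarrow$ every open cover has a $\sigma$-locally-finite, hence point-countable, refinement, but more simply one uses that $X/G$, being metrizable, has a countable cover by the pieces of a $\sigma$-discrete basis) admits a countable subcover when $X$ is second countable; in general one partitions $X/G$ into its clopen metrizable pieces. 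Cleanly: it suffices to observe that a metrizable space is paracompact and each $T_i$ retains its tube structure on any open $G$-subset, then pass to a $\sigma$-locally finite refinement $\{V_{n,\alpha}\}_{n \in \N}$; for fixed $n$ the family $\{V_{n,\alpha}\}_\alpha$ is locally finite, and within each $T_i$ (which deformation-structure we do not need) we can amalgamate the $V_{n,\alpha}$ meeting only $T_i$-type tubes into a single tube of the same orbit type $H_i$, using that $S_i \x_{H_i} G$ restricted to a saturated open subset is again an $H_i$-tube and that an arbitrary disjoint union of $H_i$-tubes is an $H_i$-tube. Since $\cF$ is finite, grouping by the finitely many $H_i \in \cF$ and by $n \in \N$ yields countably many $\cF$-tubes covering $X$ isovariantly.

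Having reduced to an isovariant cover of $X$ by $\aleph_0$-many $\cF$-tubes, Theorem~\ref{thm:classify} applies verbatim with $\kappa = \aleph_0$: it produces a $G$-map $F : X \longra \sE_\cF^{\aleph_0} G$ (landing in the isovariant part $\sE_\cF^{\aleph_0}G$ precisely because the cover is isovariant, as in the isovariant case of that proof) inducing $f : X/G \longra \sB_\cF^{\aleph_0} G$ with $\Psi : X \longra f^*(\sE_\cF^{\aleph_0} G)$ a $G$-homeomorphism. This is the claimed conclusion. The main obstacle I anticipate is the cardinality reduction step: one must be careful that amalgamating several disjoint tubes of the \emph{same} orbit type $H_i$ genuinely produces an $\cF$-tube (the slice of the amalgam is the disjoint union of the individual slices, an $H_i$-space, and disjointness of the saturations is what makes the balanced product split as claimed), and that local finiteness of each layer $\{V_{n,\alpha}\}_\alpha$ is what guarantees the saturations one amalgamates within a fixed layer are pairwise disjoint after possibly shrinking — this is exactly where finiteness of $\cF$ and metrizability (hence paracompactness, giving the $\sigma$-locally-finite refinement) of $X/G$ are both essential, and it recovers why Palais and Bredon needed the normalized Milnor join indexed by $\aleph_0$ rather than a larger cardinal.
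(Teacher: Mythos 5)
Your overall route is the intended one: the paper states this corollary without a written proof, and the derivation it has in mind is exactly (i) Mostow/Palais slices for the compact Lie group $G$ acting on the Tikhonov space $X$, conjugated so the tube types lie in $\cF$, giving an isovariant cover by $\cF$-tubes; (ii) a reduction to $\aleph_0$-many tubes using paracompactness of the metrizable orbit space $X/G$ together with the fact that a disjoint union of $H$-tubes is again an $H$-tube, finiteness (really, countability) of $\cF$ letting you group by type; and (iii) Theorem~\ref{thm:classify} with $\kappa=\aleph_0$. Steps (i), (iii), and the amalgamation observation in (ii) are all correct.

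The one step that fails as written is the disjointness claim inside (ii): a locally finite layer $\{V_{n,\alpha}\}_\alpha$ of a $\sigma$-locally-finite refinement does \emph{not} have pairwise disjoint members ``after possibly shrinking'' --- the cover of $\R$ by the intervals $(m-1,m+1)$ is locally finite and admits no disjoint shrinking that still covers. You need genuine disjointness within each layer before you may amalgamate same-type tubes into a single tube. Two standard repairs, both compatible with your setup: (a) since $X/G$ is metrizable, take a $\sigma$-\emph{discrete} (Bing--Stone) open refinement rather than merely a $\sigma$-locally-finite one; discreteness of each layer gives pairwise disjoint saturations outright; or (b) invoke the paper's own Proposition~\ref{prop:countable} (Milnor's trick), applied to the predicate ``the preimage in $X$ is a disjoint union of isovariant $\cF$-tubes,'' which is preserved under open subsets and disjoint unions; the resulting countable locally finite cover $\{V_n\}$ then splits, after grouping by the finitely many $H \in \cF$, into $\aleph_0 \cdot \card(\cF) = \aleph_0$ tubes. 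With either repair your argument is complete and agrees with the paper's intended derivation.
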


Recall, when $G$ is a compact Lie group, the Peter--Weyl theorem implies that there are only countably many conjugacy classes of compact subgroups \cite[1.7.27]{Palais_book}.
Next, Ageev had a similar construction to ours in the realm of metric spaces \cite[3.2]{Ageev4}.

\begin{cor}[Ageev]
Let $G$ be a compact Lie group.
Write $\cpt$ for the set of compact subgroups of $G$.
Let $X$ be a metrizable $G$-space (hence $X/G$ is metrizable).
Then $X$ is $G$-homeomorphic to pullback $f^*(E_\cpt^{\aleph_0} G)$ for a map $f: X/G \longra B_\cpt^{\aleph_0} G$.
\end{cor}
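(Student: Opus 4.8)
The plan is to deduce this from the slice theorem for compact Lie actions together with Theorem~\ref{thm:classify}; the only genuine work is to reduce an a~priori uncountable slice cover to a countable one.

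\emph{Setting up the tubes.} Since $G$ is a compact Lie group, every subgroup of $G$ is compact; and since $X$ is metrizable it is Hausdorff, so every isotropy group $G_x$ is closed \cite[I:3.5]{tomDieck_TG}. Hence $G_x \in \cpt$ for all $x \in X$. A metrizable $G$-space is $T_{3.5}$, so Mostow's slice theorem \cite[1.7.19]{Palais_book} gives, for each $x$, a slice $S_x$ at $x$ whose saturation is an open $G$-neighborhood $T_x \approx S_x \x_{G_x} G$. Each $T_x$ is a $\cpt$-tube, and $\{T_x\}_{x \in X}$ is an isovariant (hence approximate, by Definition~\ref{defn:isocover}) cover of $X$ by $\cpt$-tubes: $x \in T_x$ and the defining group of $T_x$ is $G_x$ itself.

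\emph{Reducing to a countable cover.} This is the crux. First note two stability facts: restricting a $\cpt$-tube $S \x_H G$ to $\pi\inv(V)$ for $V$ open in its orbit space $S/H$ again yields a $\cpt$-tube $S' \x_H G$ (with $S'$ the preimage of $V$), of the same orbit type and open in $X$ since $H$ is compact; and a discrete family of mutually disjoint $\cpt$-tubes of a common orbit type amalgamates to a single $\cpt$-tube, since conjugate isotropy groups let one rewrite all members over one defining group $H$, so the disjoint union is $\bigl(\bigsqcup S'\bigr) \x_H G$. Now work stratum by stratum: $G$ being compact Lie has only countably many orbit types $(H_1),(H_2),\dots$ by Peter--Weyl \cite[1.7.27]{Palais_book}, and for each $n$ the sets $\{\pi(T_x) : (G_x)=(H_n)\}$ form an open cover of an open subset of $X/G$ containing the image of the $(H_n)$-stratum. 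Since $X/G$ is metrizable (as recorded in the statement), this cover admits a $\sigma$-discrete (Bing--Nagata--Smirnov) open refinement, which pulls back and amalgamates --- discrete family by discrete family --- to countably many $\cpt$-tubes of type $(H_n)$ whose union contains the $(H_n)$-stratum. Taking the union over $n$ yields a countable cover of $X$ by $\cpt$-tubes that is still isovariant (through each point there passes a member whose defining group is conjugate to that point's isotropy), hence still approximate. When $X$ is additionally separable, each stratum image is Lindel\"of and one may skip the amalgamation, simply passing to a countable subcover per stratum.

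\emph{Conclusion.} This countable cover is approximate and every $G_x$ is closed, so the $E_\cF^\kappa G$ clause of Theorem~\ref{thm:classify}, taken with $\cF = \cpt$ and $\kappa = \aleph_0$, yields a $G$-homeomorphism $X \approx f^*(E_\cpt^{\aleph_0} G)$ for the induced map $f \colon X/G \longra B_\cpt^{\aleph_0} G$. I expect the main obstacle to be the middle step --- extracting a countable subfamily that remains a cover by honest $\cpt$-tubes and remains (approximately) isovariant, which requires the stratum-by-stratum bookkeeping above; the remainder is a direct appeal to facts already in hand.
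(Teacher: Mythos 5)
Your argument is correct in substance, and it supplies a proof that the paper itself does not write down: this corollary is stated without a proof environment, with only the surrounding hints that Mostow's slice theorem yields $\cF$-tubes for $T_{3.5}$ $G$-spaces (Remark~\ref{rem:comparison}) and that Peter--Weyl gives countably many conjugacy classes of compact subgroups (recalled in the sentence immediately preceding the statement). Your three steps match the intended route: the slice theorem to get an isovariant cover by $\cpt$-tubes, a countability reduction, and then the approximate/closed-isotropy clause of Theorem~\ref{thm:classify} with $\kappa=\aleph_0$. The place where you genuinely go your own way is the countability reduction, which is indeed the only nontrivial step since a metrizable $X/G$ need not be second countable: you take a Bing--Stone $\sigma$-discrete open refinement per orbit type and amalgamate each discrete (hence pairwise disjoint) family of same-type tubes into a single tube. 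That is sound --- disjoint open tubes rewritten over a common defining group do amalgamate to $\bigl(\bigsqcup S'\bigr)\x_H G$ --- and it is the same amalgamation idea that the paper's own countability device, Proposition~\ref{prop:countable}, would implement (with the predicate ``preimage is a disjoint union of $\cpt$-tubes,'' which is preserved under open subsets and disjoint unions, followed by the same per-type regrouping via Peter--Weyl); either tool works, and yours is the more direct one for metric spaces. Two small slips to repair: ``every subgroup of $G$ is compact'' is false (a non-closed subgroup of a compact Lie group, e.g.\ an irrational winding line in a torus, is not compact) --- what you need, and in effect use, is that each $G_x$ is closed in the compact Hausdorff group $G$ and hence compact; and the openness of $\pi\inv(V)$ inside a tube is just continuity of the orbit map on the open set $T$, not a consequence of compactness of $H$.
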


Now we generalize this further, from having $X/G$ be metrizable to only Tikhonov.

\begin{cor}\label{cor:classify}
Let $G$ be any Lie group.
Write $\cpt$ for the set of compact subgroups of $G$.
Let $X$ be a Tikhonov space, equipped with a Palais-proper $G$-action.
By Palais' slice theorem \cite{Palais}, $X$ is isovariantly covered by $\cpt$-tubes, say $\kappa$-many.
Then $X$ is $G$-homeomorphic to the pullback $f^*(E_\cpt^\kappa G)$ for some map $f: X/G \longra B_\cpt^\kappa G$.
\end{cor}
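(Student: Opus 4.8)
The plan is to deduce this corollary directly from Theorem~\ref{thm:classify} by verifying its hypotheses in the present setting. Since $G$ is a Lie group and the $G$-action on $X$ is Palais-proper, Palais' slice theorem \cite{Palais} provides, for each point $x\in X$, a slice $S_x$ at $x$ on which the isotropy group $G_x$ acts, with the associated tube $S_x\x_{G_x}G$ an open $G$-neighborhood of the orbit $xG$ in $X$. Because the action is proper, every isotropy group $G_x$ is compact, so $G_x\in\cpt$; hence each such tube is a $\cpt$-tube in the sense of Definition~\ref{defn:isocover}, and the family $\{S_x\x_{G_x}G\}_{x\in X}$ covers $X$. Moreover this cover is \emph{isovariant}: the tube through $x$ has slice group exactly $G_x$ (not merely conjugate to it), which is precisely the isovariance condition.

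The one genuine issue is the cardinality bookkeeping: Theorem~\ref{thm:classify} is stated for a $G$-space covered by $\kappa$-\emph{many} $\cF$-tubes, so I must exhibit an isovariant cover of $X$ by $\cpt$-tubes indexed by a set $I$ of some cardinality $\kappa$, and the statement only claims such a $\kappa$ exists. This is where the Tikhonov hypothesis enters. The raw cover $\{S_x\x_{G_x}G\}_{x\in X}$ is indexed by $X$ itself, but the natural bound is the weight $w(X/G)$: the orbit space $X/G$ of a Palais-proper action on a Tikhonov space is again Tikhonov (indeed, for proper actions the quotient inherits the separation axioms and $wX = w(X/G)$ since the orbit map is open), so a base $\cU$ for $X/G$ of cardinality $\kappa:=w(X/G)$ pulls back to a $G$-invariant base of $X$ of the same cardinality. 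For each basic open $U$ in this base for which the corresponding $G$-set $\pi^{-1}(U)$ is contained in one of the Palais tubes, select one such tube $T_U$; the resulting subfamily $\{T_U\}_{U\in\cU}$ still covers $X$ (each $x$ lies in its own tube, which contains a basic neighborhood of $\pi(x)$), is still isovariant (a sub-collection of isovariant tubes is isovariant, provided we keep for each $x$ a tube with slice group $G_x$ — which we arrange by choosing, for each $x$, a basic $U$ with $\pi^{-1}(U)\subseteq S_x\x_{G_x}G$), and is indexed by a set of cardinality $\le\kappa$. Thus $X$ is isovariantly covered by $\kappa$-many $\cpt$-tubes.

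Having arranged this, Theorem~\ref{thm:classify} applies verbatim with $\cF=\cpt$ and this $\kappa$: it yields a map $f\colon X/G\longra \sB_\cpt^\kappa G$ with $X\iso f^*(\sE_\cpt^\kappa G)$. Finally I would note that $\sE_\cpt^\kappa G\subseteq E_\cpt^\kappa G$ and $\sB_\cpt^\kappa G\subseteq B_\cpt^\kappa G$, so composing $f$ with the inclusion $\sB_\cpt^\kappa G\hookrightarrow B_\cpt^\kappa G$ and transporting the pullback along it gives the stated conclusion $X\iso f^*(E_\cpt^\kappa G)$ for a map $f\colon X/G\longra B_\cpt^\kappa G$ (the pullback of $E_\cpt^\kappa G$ along a map into the subspace $\sB_\cpt^\kappa G$ agrees with the pullback of $\sE_\cpt^\kappa G$, since $\sE_\cpt^\kappa G$ is exactly the restriction of $E_\cpt^\kappa G$ over $\sB_\cpt^\kappa G$). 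I expect the main obstacle to be the second paragraph: confirming that the weight of $X/G$ (equivalently of $X$, for a proper action) is the right bound on $\kappa$ and that one can thin the slice-theoretic cover down to that cardinality while preserving isovariance.
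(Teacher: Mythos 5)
Your first and third paragraphs are exactly the paper's (implicit) argument: Palais' slice theorem gives an isovariant cover of $X$ by $\cpt$-tubes, and Theorem~\ref{thm:classify} does the rest. Two remarks. First, your entire second paragraph addresses a problem the statement does not pose: the corollary merely names the cardinality of whatever cover the slice theorem produces (``say $\kappa$-many''), so no thinning down to the weight $w(X/G)$ is required. Moreover, the thinning argument as written has a gap: when several points $x$ with non-conjugate isotropy all satisfy $\pi^{-1}(U)\subseteq S_x\times_{G_x}G$ for the same basic open $U$, selecting a single tube $T_U$ can destroy isovariance for the points whose tubes were discarded, and your parenthetical fix (choosing a basic $U$ per $x$) does not resolve this because the selection is ultimately indexed by $U$, not by $x$. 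Second, your passage from $\sE_\cpt^\kappa G$ to $E_\cpt^\kappa G$ via the inclusion $\sB_\cpt^\kappa G\hookrightarrow B_\cpt^\kappa G$ is correct, but a more direct route is already built into Theorem~\ref{thm:classify}: an isovariant cover is in particular approximate (if $G_x=g^{-1}H_ig$ then $G_x\leqslant g^{-1}H_ig\subseteq O$ for every neighborhood $O$ of $G_x$), and each $G_x$ is compact hence closed in the Hausdorff group $G$, so the theorem's final sentence yields the conclusion with $E_\cpt^\kappa G$ outright.
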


In the next definition, the trivial group is in $\cpt$ but not in $\lrg$ for the $p$-adic integers $G = \Z_p$.

\begin{defn}[Antonyan]\label{defn:large}
Let $G$ be a locally compact Hausdorff group.
A subgroup $H$ of $G$ is \textbf{large} if the homogeneous space $G/H$ is a topological manifold.
Write $\lrg \subseteq \cpt$ (equality if $G$ Lie) for the subset of large compact subgroups of $G$.
\end{defn}

The approximate-slice theorem of Abels--Biller--Antonyan \cite[3.6]{Antonyan2} is used instead of Palais' slice theorem to prove the following generalization of Corollary~\ref{cor:classify}.
Our conclusion follows from theirs: their $G$-map $F$ is an embedding \cite[4.4(1)]{AAV}.
We conclude the same; our $F$ also separates points from closed sets \cite[2.3.20]{Engelking2}.

\begin{cor}[Antonyan--Antonyan--Valera-Velasco]
Let $G$ be a locally compact Hausdorff group.
Let $X$ be a Tikhonov space with a Palais-proper $G$-action.
Then $X$ is $G$-homeomorphic to the pullback $f^*(E_\lrg^{wX} G)$ for some map $f: X/G \longra B_\lrg^{wX} G$.
\end{cor}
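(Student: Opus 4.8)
The plan is to obtain this as a direct application of Theorem~\ref{thm:classify} with $\cF = \lrg$ and $\kappa = wX$, so that only two inputs must be supplied: an approximate cover of $X$ by $wX$-many $\lrg$-tubes, and the closedness of the isotropy groups. The second is immediate --- $X$ is Tikhonov, hence Hausdorff, so every $G_x$ is closed in $G$ by \cite[I:3.5]{tomDieck_TG} --- which makes the approximate clause of Theorem~\ref{thm:classify} applicable.

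For the first input I would invoke the approximate-slice theorem of Abels--Biller--Antonyan \cite[3.6]{Antonyan2}: since $G$ is locally compact Hausdorff and the $G$-action on $X$ is Palais-proper (so that the isotropy groups are compact), each $x \in X$ has, for every neighborhood $O$ of $G_x$ in $G$, an approximate slice whose associated tube $S \x_H G$ has $H$ large compact with $G_x \leqslant g\inv H g \subset O$ for a suitable $g \in G$. The totality of these tubes is an approximate cover by $\lrg$-tubes in the precise sense of Definition~\ref{defn:isocover}. To bound its cardinality by $wX$ I would use the explicit construction of Abels--Antonyan--Valera-Velasco \cite[4.4(1)]{AAV}: from such slices they assemble a single $G$-map $F \colon X \longra E_\lrg^{wX} G$ that is a $G$-embedding, the index set being a separating family of size $wX$ furnished by the weight of $X$, and the corresponding subfamily of tubes still covers $X$ approximately. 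Feeding this cover into Theorem~\ref{thm:classify} then produces $f = F/G \colon X/G \longra B_\lrg^{wX} G$ together with the $G$-homeomorphism $X \iso f^*(E_\lrg^{wX} G)$; the open injective $G$-map $\Psi$ of that proof is exactly the comparison recovering the embedding $F$, and, as noted just before the statement, one checks in addition via \cite[2.3.20]{Engelking2} that $F$ separates points from closed subsets of $X$.

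The step I expect to be the main obstacle is precisely this cardinality control: the cover produced directly by the approximate-slice theorem is indexed by $X$ itself and vastly exceeds $wX$, while passing to an arbitrary open subcover can destroy the approximate property at some pair $(x,O)$. One therefore either relies on the AAV construction, which builds a cover by $wX$ tubes \emph{ab initio} from a separating family, or carries out a refinement indexed over pairs consisting of a base element of $X$ and a small neighborhood of an isotropy group --- feasible because, for a proper action of a locally compact Hausdorff group, the isotropy groups are compact and so admit neighborhood bases that do not inflate the cardinality beyond $wX$.
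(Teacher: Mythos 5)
Your proposal follows the paper's own route: both reduce to Theorem~\ref{thm:classify} with $\cF=\lrg$, $\kappa=wX$ and the approximate clause (isotropy closed because $X$ is Hausdorff), and both manufacture the approximate cover by $\lrg$-tubes from the approximate-slice theorem \cite[3.6]{Antonyan2}; the paper additionally records the small argument (openness of $g\mapsto xg$ onto the orbit, so $xO=U\cap xG$, whence $xH\subset xO$ and $H\subset O$) needed to convert that theorem's conclusion into the containment $G_x\leqslant H\subset O$ demanded by Definition~\ref{defn:isocover}, a point you elide but which is routine. The one genuine divergence is the cardinality bound. The paper simply extracts from $\{T(x,O)\}$ a subcover of cardinality $\leqslant wX$ via \cite[1.1.14]{Engelking2}, whereas you decline this move --- correctly observing that an arbitrary subcover retains $x\in T_i$ and the automatic lower containment $G_x\leqslant g\inv H_i g$ but may lose the upper bound $g\inv H_i g\subset O$ --- and instead propose to import the $wX$-indexed family from the AAV embedding \cite[4.4(1)]{AAV}, which is precisely the alternative the paper itself acknowledges in the sentences preceding the corollary (``their $G$-map $F$ is an embedding''). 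That route does close the argument: an injective tube-built $F$ makes the open surjection $\Psi$ of Proof~\ref{thm:classify} bijective without any further appeal to approximateness. Your second fallback, however, is not sound as stated: a neighborhood base of the compact subgroup $G_x$ in $G$ has cardinality controlled by the weight (or local weight) of $G$, not of $X$, so indexing over such bases need not stay below $wX$. Net: same approach as the paper, with a more cautious treatment of the one delicate step, at the cost of outsourcing it to \cite{AAV} rather than proving it.
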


\begin{proof}
By Theorem~\ref{thm:classify} it suffices to approximately cover $X$ by $wX$-many $\lrg$-tubes.

Let $x \in X$; let $O$ be an open neighborhood of $G_x$ in $G$.
Since the $G$-action on $X$ is Bourbaki-proper \cite[1.4, 1.6c]{Biller_proper}, the $G$-map $G \longra xG ; g \longmapsto xg$ is open \cite[I:3.19iii]{tomDieck_TG}.
Then $xO = U \cap xG$ for an open set $U$ in $X$.
By the approximate-slice theorem \cite[3.6]{Antonyan2}, there exists a \emph{large} compact subgroup $H \geqslant G_x$ of $G$ with $xH \subset U$.
Then $xH \subset xO$.
Hence $H \subset O$.
Moreover, that theorem gives an open $G$-neighborhood $T(x,O)$ of $x$ in $X$ that is $G$-homeomorphic to $S \x_H G$ for some $H$-space $S$ \cite[3.5]{Abels}.
Finally, this approximate cover $\{T(x,O)\}$ of $X$ by $\lrg$-tubes, by the Axiom of Choice, has a subcover\footnote{Note Lindel\"{o}f's lemma is $wX=\aleph_0$: any open cover  has a countable subcover \cite[II: $\R^n$]{Lindelof}.} of cardinality $\leqslant wX$ \cite[1.1.14]{Engelking2}.
\end{proof}

\subsection{Generalized equivariant principal bundles}

\begin{thm}\label{thm:equiv_princ_bund}
Let $\G$ be a topological group; let $\Pi$ be any normal subgroup of $\G$.
Let $\cF$ be a set of subgroups of $\G$ such that $H \cap \Pi = 1$ for each $H \in \cF$.
Let $\kappa$ be a cardinal.
Let $X$ be a $\G$-space isovariantly covered by $\kappa$-many $\cF$-tubes.
Then $X$ is $\G$-homeomorphic to the pullback $c^*(\sE_\cF^\kappa \G)$ for some $\G/\Pi$-map $c: X / \Pi \longra \sE_\cF^\kappa \G / \Pi$.
\end{thm}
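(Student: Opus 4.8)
The plan is to reduce Theorem~\ref{thm:equiv_princ_bund} to the already-proven existence theorem (\ref{thm:classify}) applied to the group $\G$, and then to upgrade the resulting orbit map $f\colon X/\G \longra \sB_\cF^\kappa\G$ into a $\G/\Pi$-equivariant map on the $\Pi$-orbit level. First I would observe that the hypothesis $H \cap \Pi = 1$ for all $H \in \cF$ is exactly what makes $\Pi$ act \emph{freely} on any $\cF$-tube $S \x_H \G$: a point $[s,g]$ has isotropy $g\inv H g$, which meets $\Pi$ trivially since $\Pi$ is normal. Hence $\Pi$ acts freely on $X$ (as $X$ is covered by $\cF$-tubes), and likewise freely on $\sE_\cF^\kappa\G$ by Proposition~\ref{prop:cover_classifying}; therefore $X/\Pi$ and $\sE_\cF^\kappa\G/\Pi$ carry residual $\G/\Pi$-actions, and the quotient maps $X \to X/\Pi$, $\sE_\cF^\kappa\G \to \sE_\cF^\kappa\G/\Pi$ are $\Pi$-principal bundles (or at least $\Pi$-quotient maps) compatible with those actions.

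Next I would invoke Theorem~\ref{thm:classify} verbatim for the group $\G$, the family $\cF$, and the cardinal $\kappa$: since $X$ is isovariantly covered by $\kappa$-many $\cF$-tubes, we obtain a $\G$-homeomorphism $\Psi\colon X \longra f^*(\sE_\cF^\kappa\G)$ over a map $f\colon X/\G \longra \sB_\cF^\kappa\G$, where $\Psi(x) = (x\G, F(x))$ for the explicit $\G$-map $F\colon X \longra \sE_\cF^\kappa\G$ built coordinatewise from the tube charts. The key point is that $F$ is a $\G$-map, hence in particular a $\Pi$-map, so it descends to a continuous map $c\colon X/\Pi \longra \sE_\cF^\kappa\G/\Pi$. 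Because $F$ is $\G$-equivariant and $\Pi\trianglelefteq\G$, this descended map $c$ is automatically $\G/\Pi$-equivariant for the residual actions: for $\bar g \in \G/\Pi$, $c(\overline{x}\cdot\bar g) = \overline{F(xg)} = \overline{F(x)g} = c(\overline{x})\cdot\bar g$. I would then check that the $\G$-homeomorphism $\Psi\colon X \xrightarrow{\ \approx\ } f^*(\sE_\cF^\kappa\G)$ identifies, after passing to $\Pi$-orbits, $X/\Pi$ with $c^*(\sE_\cF^\kappa\G)$: a diagram chase shows the pullback square for $c$ (over $X/\Pi$) is obtained from the pullback square for $f$ (over $X/\G$) by the base change along $X/\Pi \to X/\G$, using that pullbacks commute with the free-$\Pi$-quotient functor here.

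The main obstacle I expect is precisely this last compatibility: verifying that forming the pullback along $c$ and forming the $\Pi$-quotient genuinely commute, i.e.\ that the canonical continuous $\G$-bijection $X \cong X/\Pi \times_{\sE_\cF^\kappa\G/\Pi} \sE_\cF^\kappa\G$ (composing $\Psi$ with this identification) is a homeomorphism. This should follow because $\Psi$ is already an open map onto its image (as established in the proof of Theorem~\ref{thm:classify}) and because $\Pi$ acting freely makes the relevant quotient maps open; concretely, $c^*(\sE_\cF^\kappa\G) = \{(\overline{x}\Pi\text{-data}, e)\}$ sits inside $X/\Pi \x \sE_\cF^\kappa\G$ with the subspace topology, and openness of $x\mapsto x\Pi$ together with openness of $\Psi$ lets the same neighborhood-basis argument (with $\phi_i\inv[U\x\G]$ replaced by its $\Pi$-saturation, which is open since $\Pi$ is a group of homeomorphisms) go through. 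A minor point to handle carefully is that $c$ is stated to land in $\sE_\cF^\kappa\G/\Pi$ rather than $\sB_\cF^\kappa\G = \sE_\cF^\kappa\G/\G$; this is automatic since $F$ maps into $\sE_\cF^\kappa\G$, and we simply do not quotient by all of $\G$, only by $\Pi$. No finiteness, Hausdorffness, or Lie hypotheses are needed, so the argument is purely formal once the freeness of $\Pi$ from $H\cap\Pi=1$ is in hand.
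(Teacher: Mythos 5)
Your proposal is correct in substance but establishes the pullback property by a genuinely different mechanism than the paper. Both you and the paper define $c$ identically, by descending the coordinatewise classifying $\G$-map $F: X \longra \sE_\cF^\kappa\G$ of Theorem~\ref{thm:classify} to $\Pi$-orbit spaces (and the $\G/\Pi$-equivariance of $c$ is as automatic as you say). The paper then does no further point-set topology: it notes that $H\cap\Pi=1$ gives $(S\x_H\G)/\Pi \iso S\x_{H\Pi/\Pi}(\G/\Pi)$, so $X/\Pi$ and $\sE_\cF^\kappa\G/\Pi$ are themselves isovariantly covered by tubes for the quotient group $\G/\Pi$; a second application of Theorem~\ref{thm:classify} to these quotient spaces shows the square with corners $X/\Pi$, $X/\G$, $\sE_\cF^\kappa\G/\Pi$, $\sB_\cF^\kappa\G$ is a pullback, and two uses of the pasting law then force the asserted square to be one. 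You instead attack that square directly, rerunning the openness and injectivity arguments of Proof~\ref{thm:classify} with $\Pi$ in place of $\G$; this works, and your instinct about where the danger lies (quotients versus pullbacks in $\mathrm{Top}$) is exactly right, but two assertions should be made explicit. First, bijectivity of $\alpha: x\longmapsto(x\Pi,F(x))$ onto $c^*(\sE_\cF^\kappa\G)$, and injectivity of the comparison map $\beta: c^*(\sE_\cF^\kappa\G)\longra f^*(\sE_\cF^\kappa\G)$, $(x\Pi,e)\longmapsto(x\G,e)$, both rest on the isovariance of $F$ (that $\G_{F(x)}=\G_x$), which is precisely what the injectivity half of Proof~\ref{thm:classify} establishes; without it, $x\G=y\G$ and $F(x)\Pi=F(y)\Pi$ only give $y\in x\,\G_{F(x)}\Pi$. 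Second, once $\beta$ is known to be a continuous injection with $\beta\circ\alpha=\Psi$, openness of $\alpha$ is immediate from $\alpha(W)=\beta\inv(\Psi(W))$, so you need not redo the neighborhood-basis computation with $\Pi$-saturations at all (and note $\phi_i\inv[U\x\G]$ is already $\Pi$-saturated). Also, your phrase ``identifies $X/\Pi$ with $c^*(\sE_\cF^\kappa\G)$'' is a slip --- it is $X$ that must be identified with $c^*(\sE_\cF^\kappa\G)$, as your later displayed fiber product correctly has it. The trade-off: the paper's route additionally shows $X/\Pi\longra X/\G$ is pulled back from $\sE_\cF^\kappa\G/\Pi\longra\sB_\cF^\kappa\G$, while yours is shorter and avoids checking that quotient tubes are tubes for $\G/\Pi$.
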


Observe $X \longra X/\Pi$ is a principal $\Pi$-bundle, as the restriction of $\cF$ to $\Pi$ is $\{1\}$.

\begin{proof}
Define $c$ using the $F$ and $f$ for $\G$ of Proof~\ref{thm:classify} in the commutative diagram
\begin{equation}\label{eqn:pb1}
\begin{tikzcd}
X \arrow{r} \arrow{d}{F} & X/\Pi \arrow{r} \arrow{d}{c} & X/\G \arrow{d}{f}\\
\sE_\cF^\kappa\G \arrow{r} & \sE_\cF^\kappa\G/\Pi \arrow{r} & \sB_\cF^\kappa\G.
\end{tikzcd}
\end{equation}
Then $X$ is $\G$-homeomorphic to the pullback $f^*(\sE_\cF^\kappa\G)$, by Theorem~\ref{thm:classify}.
Consider the set $\cF_\Pi := \left\{ H_\Pi := H\Pi/\Pi ~|~ H \in \cF \right\}$ of subgroups of the topological group $G := \G/\Pi$.

Note $G_{x\Pi} = (\G_x)_\Pi$.
For each $H \in \cF$, the quotient map $H \longra H_\Pi$ is an isomorphism, since $H \cap \Pi = 1$.
Then any $H$-space $S$ is an $H_\Pi$-space.
So $(S \x_H \G) / \Pi \iso S \x_{H_\Pi} G$ as $G$-spaces.
Thus, by Proof~\ref{prop:cover_classifying}, the $G$-space $\sE_\cF^\kappa\G / \Pi$ is isovariantly covered by $\kappa\cdot\card(\cF)$-many $\cF$-tubes.
Theorem~\ref{thm:classify} gives the commutative diagram
\begin{equation}\label{eqn:pb2}
\begin{tikzcd}
X/\Pi \arrow{r}{c} \arrow{d} & \sE_\cF^\kappa\G / \Pi \arrow{r} \arrow{d} & \sE_\cF^{\kappa\cdot\card(\cF)} G \arrow{d}\\
X/\G \arrow{r}{f} & \sB_\cF^\kappa \G \arrow{r} & \sB_\cF^{\kappa\cdot\card(\cF)} G
\end{tikzcd}
\end{equation}
where the right square and the rectangle are pullbacks.
By the so-called \emph{pasting law} \cite[III:4.8b]{MacLane_book}, the left square of \eqref{eqn:pb2} is a pullback.
Again, since the right square and the rectangle of \eqref{eqn:pb1} are pullbacks, the left square of \eqref{eqn:pb1} is a pullback.
\end{proof}

May--Elmendorf's model~\cite[p278]{Elmendorf} led to this existence part of \cite[Theorem~9]{LM}.
The pullback property \cite[p269]{Lashof_equivbundles} is established in \cite[Theorem~7]{LR}.

\begin{cor}[Lashof--May--Rothenberg]
Let $\Pi$ be a closed normal subgroup of a compact Lie group $\G$.
Write $\cF(\Pi;\G)$ for the set of closed subgroups $H$ of $\G$ with $H \cap \Pi = 1$.
Let $X \longra X/\Pi$ be a numerable $(\Pi;\G)$-bundle \cite{LM} with $X$ Tikhonov.
Suppose that $X$ has the $\G$-homotopy type of a $\G$-CW complex.\footnote{A $(\Pi;\G$)-bundle $X \longra X/\Pi$ is numerable and $X \in T_{3.5}$ if $X$ \emph{is} a $\G$-CW complex \cite[4,5]{LM}.}
Then the $\G$-space $X$ is $\G$-homeomorphic to $c^*(\sE_{\cF(\Pi;\G)}^{\aleph_0} \G)$ for some $\G/\Pi$-map $c: X / \Pi \longra \sE_{\cF(\Pi;\G)}^{\aleph_0} \G / \Pi$.
\end{cor}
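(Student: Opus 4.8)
The plan is to obtain this as a direct application of Theorem~\ref{thm:equiv_princ_bund}, taking the normal subgroup to be $\Pi$, the family to be $\cF:=\cF(\Pi;\G)$, and the cardinal to be $\kappa:=\aleph_0$. The only hypothesis of that theorem not already granted is that $X$ is isovariantly covered by $\aleph_0$-many $\cF(\Pi;\G)$-tubes; once this is in place, Theorem~\ref{thm:equiv_princ_bund} yields the desired $\G/\Pi$-map $c\colon X/\Pi\longra\sE_{\cF(\Pi;\G)}^{\aleph_0}\G/\Pi$ together with the $\G$-homeomorphism $X\approx c^*(\sE_{\cF(\Pi;\G)}^{\aleph_0}\G)$.

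First I would check that every $\G$-isotropy group of $X$ lies in $\cF(\Pi;\G)$. Since $X\longra X/\Pi$ is a principal $\Pi$-bundle, $\Pi$ acts freely, so $\G_x\cap\Pi\leqslant\Pi_x=1$ for every $x\in X$; moreover $\G_x$ is closed in $\G$ because $X$ is Hausdorff \cite[I:3.5]{tomDieck_TG}. As $X$ is Tikhonov and $\G$ is a compact Lie group, Mostow's slice theorem \cite[1.7.19]{Palais_book} supplies for each $x\in X$ a tube $T_x\approx S_x\x_{\G_x}\G$ about the orbit $x\G$; because $\Pi$ is normal, every conjugate of $\G_x$ also meets $\Pi$ trivially, so each $T_x$ is an $\cF(\Pi;\G)$-tube and $\{T_x\}_{x\in X}$ is an isovariant cover of $X$.

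The remaining step --- and the main obstacle --- is to reduce this cover to one of cardinality $\aleph_0$. Here I would use numerability of the $(\Pi;\G)$-bundle: applying Milnor's countable-refinement trick \cite[p25--26]{Milnor_notes}\cite[5.9]{MilnorStasheff} $\G/\Pi$-equivariantly over the base --- averaging a subordinate partition of unity over the compact group $\G/\Pi$ to make it $\G/\Pi$-invariant --- one gets a countable $\G/\Pi$-invariant family of local models covering $X/\Pi$, and, by the local structure of a $(\Pi;\G)$-bundle over a $\G/\Pi$-space (available in the form used by \cite{Elmendorf}\cite[Theorem~7]{LR} precisely because $X$ has the $\G$-homotopy type of a $\G$-CW complex), the preimages of these models in $X$ are $\cF(\Pi;\G)$-tubes whose slice groups realize the $\G$-isotropy groups of $X$. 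Care is needed here: a subfamily of an isovariant tube cover need not itself be isovariant, since a point may lie only in tubes whose slice group properly contains a conjugate of its isotropy; the reason for building the countable cover from the bundle's own local models, rather than extracting a countable subcover of the Mostow cover, is exactly that these models carry the correct slice groups, so isovariance is preserved. With a countable isovariant $\cF(\Pi;\G)$-tube cover of $X$ thus obtained, Theorem~\ref{thm:equiv_princ_bund} completes the proof.
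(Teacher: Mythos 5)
Your proposal is correct and follows essentially the same route as the paper's one-line proof: an isovariant $\cF(\Pi;\G)$-tube cover from the slice theorem (the paper cites Palais, you cite Mostow --- interchangeable here since $\G$ is compact and $X$ Tikhonov), reduction to $\kappa=\aleph_0$ via numerability and the Milnor/tomDieck countable-refinement trick, and then Theorem~\ref{thm:equiv_princ_bund}. Your extra care that the countable refinement must preserve isovariance (by building it from tubes whose slice groups realize the isotropy, rather than from an arbitrary subcover) is a genuine detail the paper's proof leaves implicit, not a different approach.
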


\begin{proof}
An isovariant cover exists by Palais~\cite{Palais}; $\kappa = \aleph_0$ by tomDieck~\cite{tomDieck_numerable}.
\end{proof}

A noncompact result exists for twisted equivariant principal bundles \cite[11.4]{LU}.

\begin{cor}[L\"uck--Uribe]\label{cor:LU}
Let $\Pi, G$ be compactly generated Hausdorff groups.
Let $\tau: G \longra \Aut(\Pi)$ be a homomorphism with adjoint $G \x \Pi \longra \Pi$ continuous.
Write $\G := k(\Pi \rtimes_\tau G)$ \cite[III:2.28]{Bourbaki} \cite{Steenrod_convenient}.
Let $X$ be a $\G$-CW complex with $X \longra X/\Pi$ a $(G,\tau)$-equivariant principal $\Pi$-bundle.
Suppose $\cR$ is a family of local representations for $(G,\tau,\Pi)$ \cite[3.3]{LU} that satisfies the (H)-condition \cite[6.1]{LU}.
Write $\cF(\cR)$ for its associated family of closed subgroups of $\G$ \cite[3.5]{LU}.
Then $X$ is $\G$-homeomorphic to the pullback $c^*(\sE_{\cF(\cR)}^{\aleph_0} \G)$ for a $G$-map $c: X / \Pi \longra \sE_{\cF(\cR)}^{\aleph_0} \G / \Pi$.
\end{cor}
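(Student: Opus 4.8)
The plan is to reduce this statement to Theorem~\ref{thm:equiv_princ_bund} exactly as the preceding Lashof--May--Rothenberg corollary was reduced, the only real work being to match the hypotheses of Lück--Uribe's framework with those of our Theorem~\ref{thm:equiv_princ_bund}. First I would check that $\G = k(\Pi \rtimes_\tau G)$ is a genuine topological group in which $\Pi$ sits as a closed normal subgroup with quotient $\G/\Pi \iso G$; this is where the $k$-ification and the continuity of the adjoint action $G \x \Pi \longra \Pi$ enter, following \cite[III:2.28]{Bourbaki} and the conventions of \cite{Steenrod_convenient}. Next I would recall from \cite[3.5]{LU} that $\cF(\cR)$ consists of closed subgroups $H$ of $\G$ each of which meets $\Pi$ trivially (they are graphs of the local representations), so the hypothesis ``$H \cap \Pi = 1$ for each $H \in \cF$'' of Theorem~\ref{thm:equiv_princ_bund} is satisfied.

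The second step is to produce the isovariant cover of $X$ by $\kappa$-many $\cF(\cR)$-tubes with $\kappa = \aleph_0$. Here I would invoke that $X$ is a $\G$-CW complex, hence in particular Tikhonov ($T_{3.5}$) and admitting a numerable $\G$-invariant cover by tubes; the (H)-condition on $\cR$ (as in \cite[6.1]{LU}) is precisely what guarantees the isotropy groups of the $(G,\tau)$-equivariant principal $\Pi$-bundle $X \longra X/\Pi$ are conjugate into $\cF(\cR)$, so by the slice theorem for $\G$-CW complexes (Palais~\cite{Palais}, or the cellular slice structure) the cover can be taken isovariant. Countability of the index set follows from tomDieck's numerability trick \cite{tomDieck_numerable} applied to the $\G$-CW structure, exactly as in the Lashof--May--Rothenberg proof above, giving $\kappa = \aleph_0$.

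With these two verifications in hand, Theorem~\ref{thm:equiv_princ_bund} applied to $\G$, $\Pi$, $\cF = \cF(\cR)$, $\kappa = \aleph_0$ yields a $\G/\Pi$-map $c: X/\Pi \longra \sE_{\cF(\cR)}^{\aleph_0}\G/\Pi$ with $X \iso c^*(\sE_{\cF(\cR)}^{\aleph_0}\G)$ as $\G$-spaces; identifying $\G/\Pi$ with $G$ via the above isomorphism gives the asserted $G$-map $c$.

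I expect the main obstacle to be the second step --- specifically, extracting from Lück--Uribe's (H)-condition and their notion of ``family of local representations'' the concrete statement that $X$ is \emph{isovariantly} covered by $\cF(\cR)$-tubes in our sense of Definition~\ref{defn:isocover}. Their bundle-theoretic setup packages the local data differently (via equivariant local trivializations over the base $X/\Pi$ twisted by representations), so one must unwind \cite[3.3, 3.5, 6.1]{LU} to see that each such local trivialization, pushed up to $X$, is a $\G$-tube $S \x_H \G$ with $H \in \cF(\cR)$ and that every isotropy group of $X$ is achieved on the nose; the numerability/countability bookkeeping and the $k$-space subtleties around $\G$ are comparatively routine once that translation is made.
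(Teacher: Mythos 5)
Your reduction to Theorem~\ref{thm:equiv_princ_bund} is exactly the intended argument: the paper gives no explicit proof for this corollary, leaving it implicit on the model of the preceding Lashof--May--Rothenberg corollary (isovariant cover from the $\G$-CW/slice structure, $\kappa=\aleph_0$ from tomDieck's numerability trick, and $H\cap\Pi=1$ because the members of $\cF(\cR)$ are graphs of local representations). Your flagged concern about unwinding L\"uck--Uribe's (H)-condition into an isovariant $\cF(\cR)$-tube cover in the sense of Definition~\ref{defn:isocover} is the genuinely nontrivial translation step, which the paper outsources to \cite[11.4]{LU} rather than carrying out.
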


\begin{rem}[Guillou--May--Merling]
Let $\Pi$ be either a discrete group or a compact Lie group.
Let $G$ be a discrete group.
Let $\tau: G \longra \Aut(\Pi)$ be any homomorphism.
Write $\G := \Pi \rtimes_\tau G$.
Corollary~\ref{cor:LU} applies with $\cF(\cR) = \cF(\Pi;\G)$.
The model in \cite[0.4]{GMM} is more rigid, as it descends from a categorical framework.
\end{rem}

\section{The classifying property: uniqueness, I}

\subsection{Topological groups $G$ and arbitrary $G$-spaces}

As a prelude to the next subsection, we discuss coarse cones and coarse joins (survey in \cite{FG}).

\begin{defn}\label{defn:coarse_cone}
Let $(A,\cT)$ be a topological space.
Consider the half-smash set
\[
A_+ \wedge [0,1] ~:=~ A \x [0,1] ~/~ (\forall a, a' \in A : (a,0) \sim (a',0)).
\]
The \textbf{coarse cone} $C(A,\cT)$ is this set equipped with the coarsest topology for which the functions $A_+ \wedge [0,1] \longra [0,1] ~;~ [a,t] \longmapsto t$ and $A \x (0,1] \longra A ~;~ [a,t] \longmapsto a$ are continuous.
The \textbf{fine cone} $\sC(A,\cT)$ is that set equipped with the finest topology for which the function $A \x [0,1] \longra A_+ \wedge [0,1] ~;~ (a,t) \longmapsto [a,t]$ is continuous.
\end{defn}

\begin{rem}\label{rem:cone_metric}
For any space $X$, the identity function $\sC X \longra C X$ is continuous.
It is a homeomorphism if $X$ is compact, by the tube lemma.
It is not so for $X=\R$, since $\{ [x,t] \in \R_+ \wedge [0,1] ~|~ t < 1/(1+x^2) \}$ is a neighborhood of the cone point in the fine topology but not the coarse one.
If $d$ is a metric on $X$ then $C X$ has metric
\[
Cd([x,s],[y,t]) ~:=~ |s-t| + \min\{s,t\} \cdot d(x,y).
\]
Note a function $f=(f_0,f_1): A \longra CX$ is continuous if and only if its coordinates $f_1: A \longra [0,1]$ and $f_0: f_1\inv(0,1] \longra X$ are continuous.
A function $\sC X \longra Z$ is continuous if and only if the composition with $X \x [0,1] \longra \sC X$ is continuous.
\end{rem}

\begin{rem}
Historically, these two notions of cone were implicit in the topological study of simplicial complexes $K$.
If $K$ is given the CW topology \cite[p316]{Whitehead_simplicial}, then $\sC K$ is canonically a CW complex.
If $K$ is given the euclidean-metric topology \cite[I.1:4.12]{Lefschetz_TT}, then $C K$ is induced by the euclidean metric\footnote{Given the vertex set $S$ of the abstract simplicial complex \cite[\S IV.1:1]{AH} underlying $K$, there is a geometric realization in terms of basis vectors in the coproduct $\R^{\oplus S}$ equipped with the 2-norm.}.
Recall the CW topology on $K$ is finer than the metric one and is it if and only if $K$ is locally finite.
\end{rem}

If $X$ is a $G$-space, then $C X$ and $\sC X$ have $G$-actions defined by $[x,t] \cdot g := [x g, t]$.

\begin{exm}[Gelfand--Fuks]
Their unrestricted join \cite{GF} of any $G$-spaces is
\[
\bigcoast_{i \in I} X_i ~:=~ \left(\prod_{i \in I} \sC X_i\right) - \{0\}^I.
\]
\end{exm}

Correspondingly, we reformulate Milnor's definition \cite{Milnor_univ2} in terms of cones.

\begin{defn}[Milnor]\label{defn:Milnor_join}
The \textbf{coarse join} of any set of topological spaces is
\[
\mathop{\bigcirc}_{i \in I} X_i ~:=~ \left\{ [x,t] \in \prod_{i \in I} CX_i ~\bigg|~ \exists \text{ finite } J \subseteq I : \sum_{j \in J} t_j = 1 \text{ and } \forall i \in I-J : t_i=0 \right\}
\]
endowed with the subspace topology induced from Tikhonov's product topology.
Write $\bE Z := Z^{\circ \aleph_0}$ for any space $Z$, with the diagonal $G$-action if $Z$ has a $G$-action.
\end{defn}

As just above, we update for arbitrary cardinality, in \cite[1.3.6]{Palais_book} \cite[p108]{Bredon_TG}.

\begin{defn}[Palais]\label{defn:Palais_join}
The \textbf{isovariant join} of a set of topological $G$-spaces is
\[
\bigoasterisk_{i \in I} X_i ~:=~ \left\{ [x,t] \in \mathop{\bigcirc}_{i \in I} X_i ~\bigg|~ \exists i \in I : G_{[x,t]} = G_{x_i} \right\}.
\]
\end{defn}

However, if $I$ is infinite, this `hemorrhages' in neighborhoods of $A\cup X_{(H)}$ in $X$ in \cite[Proof~II:9.5]{Bredon_TG}, obstructing renormalization on $X-A$ to finite support.
This na\"ivet\'e is untenable for Proof~\ref{thm:extensor}; we introduce our own coarse join in the Gelfand--Fuks style, which for finite $I$ is $G$-homeomorphic to Palais' join via normalization.

\begin{defn}\label{defn:Khan_join}
The \textbf{unrestricted isovariant join} of topological $G$-spaces is
\[
\bigbcoast_{i \in I} X_i ~:=~ \left\{ [x,t] \in \prod_{i \in I} CX_i - \{0\}^I ~\bigg|~ \exists i \in I : G_{[x,t]} = G_{x_i} \right\}.
\]
\end{defn}

We remind the reader of the following notion of a proper action \cite[1.2.2]{Palais}.
Note that the Palais-proper condition is automatic if $G$ is an arbitrary compact group.

\begin{defn}[Palais]\label{defn:Palais_action}
A topological $G$-space $X$ is \textbf{Palais} if every $x \in X$ has a neighborhood $U$ in $X$ satisfying: each $y \in X$ has a neighborhood $V$ in $X$ so that the \textbf{transporter} $\langle U, V \rangle_G := \{g \in G ~|~ Ug \cap V \neq \emptyset \}$ is precompact (that is, has compact closure).
Observe that if $G$ is discrete, then $\langle U, U \rangle_G$ being precompact (hence finite) means that the action is properly discontinuous.
\end{defn}

In the second half of this subsection, we quickly construct a filtered homotopy.

\begin{defn}
The following topological space we shall call \textbf{bi-Sierpi\'{n}ski space}:
\[
\bI_2 ~:=~ \left( \{-1,0,+1\},~ \{ \emptyset, \{0\}, \{-1, 0\}, \{0,+1\},  \{-1,0,+1\} \} \right).
\]
It is the particular-point topology on three elements where $\bI_1$ is for two \cite[II:8]{SS}.
The inclusion $\bI_1 \longra \bI_2; 0,1 \longmapsto 1,0$ has left-inverse $\bI_2 \longra \bI_1; -1,0,1 \longmapsto 1,1,0$.
Earlier, $\bI_2$ occurs as the upper topology on the poset of the 1-simplex \cite[I:1.4]{AH}.
Notice the continuous surjection $\Delta: [-1,1] \longra \bI_2 ~;~ \pm 1 \longmapsto \pm1, -1<t<1 \longmapsto 0$.
\end{defn}

\begin{lem}\label{lem:homotopy}
Let $G$ be a topological group.
Let $\cF$ be any set of subgroups of $G$.
Suppose $\{T_i \approx G \x_{H_i} S_i\}_{i \in I}$ and $\{T_j \approx G \x_{H_j} S_j\}_{j \in J}$ each $\cF$-isovariantly cover a $G$-space $X$ for sets $I$ and $J$.
There is a $G$-map $\Phi: X \x \bI_2 \longra \sE_\cF^{I \sqcup J} G$ that restricts to the classifying $G$-maps $F_-: X \x \{-1\} \longra \sE_\cF^I G$ and $F_+: X  \x \{+1\} \longra \sE_\cF^J G$.
The same holds for approximate coverings by $\cF$-tubes where $E_\cF^* G$ (\ref{defn:Segal}) replace $\sE_\cF^* G$.
\end{lem}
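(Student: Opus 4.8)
The strategy is to build $\Phi$ by specifying its coordinates, one for each index in $I \sqcup J$, and to interpolate between the two families using the parameter in $\bI_2$. Recall from Remark~\ref{rem:cone_metric} that a map into a coarse cone $C(\cF\bs G)$ is the same data as a height function to $[0,1]$ together with a map to $\cF\bs G$ defined over the preimage of $(0,1]$; and recall the continuous surjection $\Delta:[-1,1]\longra\bI_2$. So I would first define height functions $\lambda_i : X\x\bI_2 \longra [0,1]$ for $i\in I$ and $\mu_j : X\x\bI_2\longra[0,1]$ for $j\in J$ that vanish appropriately: at the point $+1\in\bI_2$ all the $\lambda_i$ are pushed to $0$ (so only the $J$-coordinates survive, recovering $F_+$), at $-1\in\bI_2$ all the $\mu_j$ are pushed to $0$ (recovering $F_-$), and at $0\in\bI_2$ both survive. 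Concretely, for a point $x\in X$ in the $\cF$-tube $T_i\approx G\x_{H_i}S_i$, set the $\cF\bs G$-coordinate to $(q_i\circ\phi_i)(x)\in H_i\bs G$ exactly as in Proof~\ref{thm:classify}, and take the height to be, say, a function of $x$ and the $\bI_2$-coordinate that is $1$ when it should be ``on'' and $0$ outside $T_i$ or at the wrong end of $\bI_2$. The simplest choice is to let the height on the $I$-side be the composite $X\x\bI_2 \to \bI_2 \xrightarrow{\,\text{left-inv}\,} \bI_1$ detecting whether the $\bI_2$-coordinate is in $\{-1,0\}$, multiplied by an indicator of membership in $T_i$; symmetrically on the $J$-side with $\{0,+1\}$. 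Since $\bI_1$ detects open sets as continuous maps to $\bI_1$ (the footnote on Sierpiński space), and $T_i$ is open, these height data are continuous, and the pair assembles to a continuous map $\Phi_i : X\x\bI_2 \longra C(\cF\bs G)$ by Remark~\ref{rem:cone_metric}.

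Next I would check that the assembled map $\Phi=(\Phi_k)_{k\in I\sqcup J}$ actually lands in $\sE_\cF^{I\sqcup J}G$, i.e.\ that for every $(x,\epsilon)\in X\x\bI_2$ some coordinate $e_k$ satisfies $G_{[\text{all coords}]} = G_{e_k}$. This is where isovariance is used: whatever $\epsilon\in\bI_2$ is, at least one of the two families is ``active'' (the $I$-family if $\epsilon\in\{-1,0\}$, the $J$-family if $\epsilon\in\{0,+1\}$), and for the active family isovariance of the cover gives an index with $x\in T_i$ and $G_x$ conjugate to $H_i$; the argument in Proof~\ref{prop:cover_classifying} and the injectivity computation in Proof~\ref{thm:classify} (that $F_i(x)=H_i ng$ with $n\in N_G(H_i)$, forcing the stabilizer of that coordinate down to $G_x$) then shows that coordinate realizes the total stabilizer. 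One must also confirm $\Phi(x,\epsilon)\neq\{0\}^{I\sqcup J}$, which is immediate since at least one height is $1$. Continuity of $\Phi$ as a map into the product $\prod_{k} C(\cF\bs G)$ follows coordinatewise, hence into the subspace $\sE_\cF^{I\sqcup J}G$. The restrictions $\Phi|_{X\x\{-1\}}$ and $\Phi|_{X\x\{+1\}}$ are then, by construction, exactly $F_-$ and $F_+$ viewed inside the larger join via the obvious coordinate inclusions $\sE_\cF^I G\hookrightarrow\sE_\cF^{I\sqcup J}G$ and $\sE_\cF^J G\hookrightarrow\sE_\cF^{I\sqcup J}G$ (padding the absent coordinates with the cone point $0$).

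For the approximate case, the only change is that isovariance ($G_x$ conjugate to $H_i$) is weakened to $G_x\leqslant g\inv H_i g\subset O$, so one can no longer guarantee a coordinate whose stabilizer equals the total stabilizer; hence one works in $E_\cF^{I\sqcup J}G$ rather than $\sE_\cF^{I\sqcup J}G$, and the only thing to verify is that $\Phi$ avoids $\{0\}^{I\sqcup J}$, which again is automatic. The classifying maps $F_\pm$ are then the approximate-case maps $F$ from Theorem~\ref{thm:classify}, and the same padding-by-cone-points inclusions apply.

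I expect the main obstacle to be purely organizational rather than conceptual: getting the height functions to depend continuously on $\epsilon\in\bI_2$ (a non-Hausdorff, non-$T_1$ space) while simultaneously degenerating correctly at the two closed points $\pm1$ and remaining ``live'' at the generic point $0$. The $\bI_2\to\bI_1$ retractions make this manageable --- one packages the dependence on $\epsilon$ entirely through continuous maps to Sierpiński-type spaces, so that the product/subspace-topology continuity checks reduce to the openness of the tubes $T_i$, exactly as in Proof~\ref{thm:classify} --- but care is needed that the two retractions $\bI_2\to\bI_1$ used on the $I$-side and the $J$-side are the two genuinely different ones (collapsing $\{-1,0\}$ versus $\{0,+1\}$), since it is their overlap at the open point $0$ that makes $\Phi$ a homotopy through $\sE_\cF^{I\sqcup J}G$ rather than two disjoint maps.
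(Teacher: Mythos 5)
Your construction is essentially the paper's: each coordinate indexed by $i \in I$ is ``switched on'' exactly over the open set $T_i \x \{-1,0\}$ (and each $j \in J$ over $T_j \x \{0,+1\}$), so that $\Phi$ restricts at $-1$ to $F_-$ and at $+1$ to $F_+$ via extension by zero, agrees at $0$ with the classifying map of the combined cover, and is continuous because these sets are open in $X \x \bI_2$. One caveat: the target here is built from McCord's indiscrete cone $c(\cF \bs G)$ of Definition~\ref{defn:Segal}, not the coarse cone $C(\cF \bs G)$ of Definition~\ref{defn:coarse_cone}, so the relevant continuity criterion is not the $[0,1]$-valued height description of Remark~\ref{rem:cone_metric} but the Sierpi\'nski-type one you in fact fall back on (a map into $c(A)$ is precisely an open set together with a continuous map from it to $A$); with that substitution your argument is the paper's proof.
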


So the classifying maps $f_\pm: X/G \longra \sB_\cF^{I\sqcup J} G$ are homotopic via $(\id \x \Delta) \circ \Phi/G$.
The proof works more generally for $G$-maps $F_\pm$ not necessarily induced from tubes.

\begin{proof}
Define $\Phi|: X\x\{0\} \longra \sE_\cF^{I \sqcup J} G$ to be the classifying $G$-map of Theorem~\ref{thm:classify} for the combined isovariant cover $\{T_i\}_{i \in I} \sqcup \{T_j\}_{j \in J}$ of the $G$-space $X$ by $\cF$-tubes.
Define $\Phi| X\x\{-1\}$ to be the classifying $G$-map $F_-$ for the isovariant cover $\{T_i\}_{i \in I}$; define $\Phi| X\x\{+1\}$ to be the classifying $G$-map $F_+$ for the isovariant cover $\{T_j\}_{j \in J}$.
Here we use the $G$-embedding $\sE_\cF^I G \sqcup \sE_\cF^J G \longra \sE_\cF^{I \sqcup J} G$ given by extension by zero.

Let $O$ be open in $G$.
Let $i \in I$.
In the product topology, consider the open set
\[
B(O,i) ~:=~ \{ e \in \sE_\cF^{I\sqcup J} G ~|~ e_i \in OH_i/H_i \}.
\]
There is equipped a $G$-homeomorphism $\phi_i: T_i \longra G \x_{H_i} S_i$.
Note the preimage
\[
\Phi\inv(B(O,i)) ~=~ \phi_i\inv(OH_i \x_{H_i} S_i) \x \{-1,0\}
\]
is open in $X \x \bI_2$.
Similarly one defines $B(O,j)$ for any $j \in J$ and verifies a similar equality.
Observe that $\{B(O,k) ~|~ O \text{ open in } G \text{ and } k \in I \sqcup J \}$ is a subbase for the topology of $\sE_\cF^{I \sqcup J} G$.
Therefore the $G$-function $\Phi$ is continuous.
\end{proof}

\begin{thm}\label{thm:final}
Let $G$ be a topological group.
Let $\cF$ be any set of subgroups of $G$.
Let $\kappa$ be an infinite cardinal.
Then $E_\cF^\kappa G$ is a final object in the category of all topological $G$-spaces covered by $\kappa$-many $\cF$-tubes and $G$-homotopy classes of $G$-maps.
\end{thm}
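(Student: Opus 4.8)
The plan is to verify the universal property directly: for $E_\cF^\kappa G$ to be final in this category, I must show that for every $G$-space $X$ covered by $\kappa$-many $\cF$-tubes, the set of $G$-homotopy classes of $G$-maps $X \longra E_\cF^\kappa G$ is a singleton. \emph{Existence} is essentially already in hand: the construction in Proof~\ref{thm:classify} produces a $G$-map $F\colon X \longra E_\cF^\kappa G$ from the tube data (using the approximate version, which only needs each $G_x$ closed --- but actually, re-examining the setup, for a bare ``covered by'' hypothesis one should note that the $F_i\colon X \longra c(\cF\bs G)$ are continuous $G$-maps requiring no separation axiom, since continuity of each $F_i$ only uses openness of $T_i$ and continuity of $q_i\circ\phi_i$; the $T_0$/regularity hypotheses in Theorem~\ref{thm:classify} were needed for injectivity of $\Psi$, which plays no role here). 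So the first step is simply to record that $F$ exists and lands in $E_\cF^\kappa G$ because $X$ is covered, so no point maps to $\{0\}^I$.

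The substance is \emph{uniqueness up to $G$-homotopy}. Here I would invoke Lemma~\ref{lem:homotopy}: given two $G$-maps $F_-, F_+\colon X \longra E_\cF^\kappa G$, I need to produce a $G$-homotopy between them. The lemma as stated compares the two \emph{canonical} classifying maps arising from two tube coverings indexed by $I$ and $J$; but the remark immediately following its statement observes that ``the proof works more generally for $G$-maps $F_\pm$ not necessarily induced from tubes.'' So I would apply that generalized form: take $I = J$ (both of cardinality $\kappa$) with the \emph{same} tube cover, feed in the arbitrary $G$-maps $F_-, F_+$ as the boundary data, obtain $\Phi\colon X \x \bI_2 \longra E_\cF^{I\sqcup J}G$, and then compose with $\id\x\Delta$ where $\Delta\colon[-1,1]\longra\bI_2$ is the continuous surjection, giving an honest homotopy $X\x[-1,1]\longra E_\cF^{I\sqcup J}G$. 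The last wrinkle is the index set: $I\sqcup J$ has cardinality $2\kappa = \kappa$ (as $\kappa$ is infinite), and the remark after Definition~\ref{defn:Segal} states the $G$-homeomorphism type of $E_\cF^\kappa G$ is independent of the representative set, so $E_\cF^{I\sqcup J}G \iso E_\cF^\kappa G$ $G$-equivariantly; composing the homotopy with this identification lands it back in $E_\cF^\kappa G$.

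I expect the main obstacle to be a bookkeeping one rather than a conceptual one: carefully justifying that Lemma~\ref{lem:homotopy}'s proof really does go through for $F_\pm$ not induced from tubes. Inspecting that proof, the boundary maps $F_\pm$ are used only to define $\Phi$ on $X\x\{\pm1\}$ via the zero-extension $G$-embedding $E_\cF^I G \sqcup E_\cF^J G \longra E_\cF^{I\sqcup J}G$, and the continuity check of $\Phi$ against the subbasic opens $B(O,k)$ factors through the tube structure only on the ``middle'' slice $X\x\{0\}$; on $X\x\{\pm1\}$ the preimage $\Phi\inv(B(O,k))$ is just $F_\pm\inv(B(O,k))\x\{\pm1\}$ (for $k$ in the appropriate summand) or empty, which is open precisely because $F_\pm$ is continuous. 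Assembling these three pieces into one continuous $G$-function on $X\x\bI_2$ uses that $\{0\}$ is the generic point of $\bI_2$, so openness of a preimage in $X\x\bI_2$ is controlled by its trace on $X\x\{0\}$ together with its traces on $X\x\{\pm1\}$ --- exactly the three computations performed. Hence the argument transfers, and one concludes that any two $G$-maps $X \longra E_\cF^\kappa G$ are $G$-homotopic, completing the proof that $E_\cF^\kappa G$ is final.
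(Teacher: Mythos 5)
Your overall route is the paper's: existence of a $G$-map $X \longra E_\cF^\kappa G$ comes from the construction of $F$ in Proof~\ref{thm:classify} (the separation hypotheses there are indeed only needed for the pullback/injectivity statement), and uniqueness up to $G$-homotopy comes from Lemma~\ref{lem:homotopy} composed with $\id \x \Delta$, using $\kappa + \kappa = \kappa$. There is, however, one step that fails as you have described it: the continuity of the generalized $\Phi$. In $\bI_2$ every nonempty open set contains $0$, so an open subset $W$ of $X \x \bI_2$ must have traces satisfying $W_{-1} \cup W_{+1} \subseteq W_0$; openness of the three traces alone is \emph{not} sufficient, contrary to your closing sentence. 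You propose to keep the middle slice $\Phi|X\x\{0\}$ as the tube-derived classifying map of the (doubled) cover while replacing the boundary slices by arbitrary $G$-maps $F_\pm$. Then for $i \in I$ the preimage $\Phi\inv(B(O,i))$ has trace $F_-\inv(\pi_i\inv(OH_i/H_i))$ on $X\x\{-1\}$ but trace $\phi_i\inv(OH_i\x_{H_i}S_i)$ on $X\x\{0\}$, and for an arbitrary $F_-$ there is no reason for the former to be contained in the latter; so your $\Phi$ is discontinuous at points of $X\x\{\pm1\}$ in general. (In the lemma as stated this containment holds with equality precisely because $F_-$ and the combined classifying map share their $I$-coordinates.)

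The repair is small but it is the actual content of the remark you cite: for arbitrary $F_\pm$ one must also replace the middle slice, defining $\Phi|X\x\{0\}$ to be the juxtaposition $(F_-,F_+)\colon X \longra E_\cF^{I\sqcup J}G$ whose $I$-coordinates are those of $F_-$ and whose $J$-coordinates are those of $F_+$. Then $\Phi\inv(B(O,i)) = F_-\inv(\pi_i\inv(OH_i/H_i)) \x \{-1,0\}$ and likewise for $j \in J$ with $\{0,+1\}$, so all preimages of subbasic opens are open and the containment condition holds by construction; when $F_\pm$ are classifying maps of covers, this juxtaposition is exactly the classifying map of the combined cover, which is how the lemma's proof reads. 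With that change your argument goes through and coincides with the paper's (which likewise passes to $E_\cF^{I\sqcup J}G \iso E_\cF^\kappa G$ via $\kappa+\kappa=\kappa$ at the same level of detail you give).
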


For paracompact Hausdorff orbit space, one assumes $\kappa  = \aleph_0$ by Proposition~\ref{prop:countable}.

\begin{proof}
Existence is in Proof~\ref{thm:classify}, without ``approximate'' for the extra ``pullback.''
Uniqueness up to $G$-homotopy is Lemma~\ref{lem:homotopy} via $(\id \x \Delta) \circ \Phi$ and $\kappa + \kappa = \kappa$.
\end{proof}

Here is an important case \cite[I:6.6]{tomDieck_TG}, upon which the Baum--Connes conjecture is formulated.
Asserted in \cite[Proof~A:1]{BCH}, one must replace Husem\"oller's case of $\cF = \{1\}$ with L\"uck's observation \cite[2.5i]{Lueck_classify} that the former case works for noncompact $G$.
(Earlier, tomDieck had a narrower case \cite{tomDieck_orbittypes} derived from \cite{tomDieck_numerable}.)

\begin{cor}[tomDieck]
Let $G$ be a locally compact Hausdorff group.
Let $\cF$ be a set of closed subgroups of $G$ preserved under finite intersections and under conjugacy.
The coarse join $\bE(\cF \bs G) = (\cF \bs G)^{\circ \aleph_0}$  is a final object in the full subcategory of numerable $G$-spaces. (Recall the isovariant $G$-map $\bE(\cF \bs G) \longra E_\cF^{\aleph_0} G$ of \ref{rem:final}.)
\end{cor}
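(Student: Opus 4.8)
The plan is to deduce this final corollary from Theorem~\ref{thm:final} together with the comparison map of Remark~\ref{rem:final} and the basic theory of numerable $G$-spaces. First I would recall that a numerable $G$-space $X$ (in the sense of \cite{tomDieck_numerable}) is, by definition, one admitting a numerable cover by tubes; when $\cF$ is a set of closed subgroups closed under conjugacy and finite intersections, such a cover can be taken to consist of $\cF$-tubes, and --- crucially --- by Milnor's countable partition-of-unity trick (as invoked for Corollary~\ref{cor:MilnorDold} via \cite{tomDieck_numerable}) one may always shrink it to a cover by $\aleph_0$-many $\cF$-tubes. Hence every numerable $G$-space lies in the category appearing in Theorem~\ref{thm:final} with $\kappa = \aleph_0$, so $E_\cF^{\aleph_0} G$ is already a final object there. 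It remains to transfer finality along the identity-on-objects comparison, i.e. to replace $E_\cF^{\aleph_0} G$ by the coarse join $\bE(\cF\bs G)$.

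The key step is to show that the isovariant $G$-map $\rho\colon \bE(\cF\bs G) \longra E_\cF^{\aleph_0} G$ of Remark~\ref{rem:final} (the inclusion induced by the continuous identity $\sC(\cF\bs G) \longra c(\cF\bs G)$, since the coarse join sits inside $(\sC(\cF\bs G))^{\aleph_0}$ which maps to $(c(\cF\bs G))^{\aleph_0}$) is itself a $G$-homotopy equivalence after one observes that $\bE(\cF\bs G)$ is numerable: Milnor's join construction is the paradigmatic numerable space, its standard partition of unity $\{t_n\}$ giving tubes $t_n\inv(0,1] \approx (\cF\bs G)\x_{\,?}\,G$, so $\bE(\cF\bs G)$ is covered by $\aleph_0$-many $\cF$-tubes and thus lies in the category of Theorem~\ref{thm:final}. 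Now finality is a purely formal consequence: in any category, if $Z$ is final and $W$ is any object that itself receives the canonical map and is such that the composite $W \longra Z \longra W$ (using finality of $Z$ to get $Z \longra W$... wait) --- more cleanly, two final objects of a category are canonically isomorphic, so it suffices to show $\bE(\cF\bs G)$ is \emph{also} final in the numerable category. For that, given any numerable $G$-space $X$, existence of a $G$-map $X \longra \bE(\cF\bs G)$ is Milnor's classical construction (the coarse join admits a $G$-map from any space with a numerable cover by $\cF$-tubes, built from a countable numerable partition of unity subordinate to the tubes, exactly as in \cite{tomDieck_numerable}, \cite{Husemoller}), and uniqueness up to $G$-homotopy follows from Lemma~\ref{lem:homotopy} applied with the source space $X$ --- since the proof of that lemma works for arbitrary $G$-maps $F_\pm$ induced from tubes and factors through the combined cover, and since $\Delta$ converts the $\bI_2$-parametrized $G$-map into an honest homotopy --- provided one checks the renormalization goes through in the coarse (finitely-supported) join; here finite support is built into Definition~\ref{defn:Milnor_join}, so the homotopy lands in $\bE(\cF\bs G)$ without the `hemorrhaging' pathology flagged after Definition~\ref{defn:Palais_join}.

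Concretely, the steps in order: (1) record that for $\cF$ closed under conjugacy and finite intersections, ``numerable $G$-space'' $=$ ``$G$-space with a numerable cover by $\cF$-tubes'' $=$ (via Milnor shrinking) ``$G$-space covered by $\aleph_0$-many $\cF$-tubes carrying a subordinate numerable partition of unity''; (2) verify $\bE(\cF\bs G)$ is itself such a space, via its tautological partition of unity; (3) construct, for any numerable $G$-space $X$ with partition $\{\lambda_n\}$ subordinate to $\cF$-tubes $\{\phi_n\colon T_n \approx S_n\x_{H_n} G\}$, the $G$-map $X \longra \bE(\cF\bs G)$ with $n$-th coordinate $x \mapsto [\,(q_n\circ\phi_n)(x),\ \lambda_n(x)\,]$, checking continuity against the coarse-cone criterion of Remark~\ref{rem:cone_metric} and finiteness of support from local finiteness of $\{\operatorname{supp}\lambda_n\}$; (4) invoke Lemma~\ref{lem:homotopy} for uniqueness up to $G$-homotopy, noting its proof adapts to the coarse join; (5) conclude $\bE(\cF\bs G)$ is final in the numerable category, hence canonically $G$-homotopy equivalent to $E_\cF^{\aleph_0} G$ restricted there, which is final by Theorem~\ref{thm:final}. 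The main obstacle I anticipate is step (4): one must confirm that the homotopy $\Phi$ of Lemma~\ref{lem:homotopy}, when composed with $\Delta$ and read into the coarse (finite-support) join, genuinely preserves finite support throughout the parameter $[-1,1]$ and remains continuous in the subspace topology of $\prod CX_i$ --- i.e. that interpolating between two finitely-supported configurations via the $\bI_2$-trick does not transiently create infinite support. Since $\bI_2$ only mixes two ``halves'' and each half already has the combined-cover map as its $0$-level, the support at the parameter $0$ is the union of two locally finite families hence locally finite, and away from $0$ it is one half or the other; the renormalization $\Delta$ then acts coordinatewise without enlarging support, so this should go through, but it is the point requiring care.
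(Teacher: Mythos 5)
The paper offers no proof of this corollary at all: it is stated as a known ``important case'' and deferred to tomDieck \cite[I:6.6]{tomDieck_TG} together with L\"uck's observation for noncompact $G$, so your proposal must be judged on its own. Your steps (1)--(3) and (5) follow the standard tomDieck--Husem\"oller route and are fine in outline: a numerable $G$-space has a countable numerable cover by $\cF$-tubes (Proposition~\ref{prop:countable} or \cite[Hilfsatz~2]{tomDieck_numerable}), the coordinatewise formula $x \mapsto [(q_n\circ\phi_n)(x), \lambda_n(x)]$ is continuous by the criterion of Remark~\ref{rem:cone_metric}, and closure of $\cF$ under conjugacy and finite intersections is exactly what makes $\bE(\cF\bs G)$ itself a numerable object of the subcategory (the isotropy of a join point is a finite intersection of conjugates of members of $\cF$ --- this is where your ``$\x_{\,?}\,G$'' must be resolved, and you should say so explicitly).

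The genuine gap is step (4). Lemma~\ref{lem:homotopy} cannot be ``adapted to the coarse join,'' and the obstacle is not the finite-support issue you flag. The continuity of $\Phi: X \x \bI_2 \longra \sE_\cF^{I\sqcup J}G$ rests entirely on the non-Hausdorff topologies involved: the subbasic preimages are sets of the form $(\text{open})\x\{-1,0\}$, open only because $\{-1,0\}$ is open in the particular-point space $\bI_2$, and the indiscrete cone $c(\cF\bs G)$ carries no join parameter to interpolate. The composite $\Phi\circ(\id\x\Delta)$ is constant on $X\x(-1,1)$ and \emph{jumps} at $t=\pm1$, instantaneously killing all $J$-coordinates (resp.\ $I$-coordinates); read into the coarse join $\bE(\cF\bs G)$, whose coordinates carry genuine $[0,1]$-valued weights and which is a Tikhonov space, such a map is simply discontinuous. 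There is no ``renormalization'' of this homotopy: $\Delta$ is just the quotient $[-1,1]\longra\bI_2$ and does not act on join coordinates. Uniqueness up to $G$-homotopy in the numerable category requires the classical, genuinely different argument --- the even/odd reindexing $G$-homotopy equivalences $\bE(\cF\bs G)\longra\bE(\cF\bs G)$ followed by the straight-line homotopy between classifying maps with disjoint coordinate support, driven by the partitions of unity --- as in \cite{tomDieck_numerable} and \cite[4:12.4]{Husemoller}. Without replacing step (4) by that argument, your proof of finality of $\bE(\cF\bs G)$ does not close.
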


The first case is recorded independently in \cite[4:12.4]{Husemoller} \cite[\S3]{tomDieck_numerable} after Milnor.

\begin{cor}[Husem\"oller]
Let $G$ be a topological group.
Then $\bE G$ is a final object in the category of numerable $G$-spaces and $G$-homotopy classes of $G$-maps.
\end{cor}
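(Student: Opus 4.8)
The plan is to verify the two defining properties of a terminal object: that for every numerable $G$-space $X$ --- a free $G$-space whose orbit map $p\colon X \to X/G$ carries a numerable trivializing cover --- there is a $G$-map $X \to \bE G$, and that any two $G$-maps $X \to \bE G$ are $G$-homotopic. Observe that $\bE G = G^{\circ\aleph_0}$ is itself such an object, via the barycentric partition of unity $[g,t] \mapsto t_n$ on $\bB G$.

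For existence I would run the classical Milnor construction in the coarse-cone formalism. Milnor's shrinking trick \cite[p25--26]{Milnor_notes}\cite[\S3]{tomDieck_numerable} first replaces the given cover by a countable numerable trivializing cover $\{V_n\}_{n\in\N}$ with a subordinate locally finite partition of unity $\{u_n \colon X/G \to [0,1]\}$ and trivializations whose $G$-coordinate maps $\rho_n \colon p\inv(V_n) \to G$ obey $\rho_n(xg) = \rho_n(x)g$. Then set $F\colon X \to \bE G$, $F_n(x) := [\rho_n(x),\, u_n(p(x))] \in CG$, reading $[-,0]$ as the cone point so that $F_n$ is defined on all of $X$. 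Equivariance is immediate; the image lies in the coarse join of Definition~\ref{defn:Milnor_join} because $\sum_n u_n(p(x)) = 1$ has finite support; and continuity follows from the criterion of Remark~\ref{rem:cone_metric} --- a map into $CG$ is continuous iff its weight and, where positive, its point coordinate are --- together with local finiteness of $\{\mathrm{supp}\,u_n\}$, which near each point of $X$ makes all but finitely many $F_n$ constantly the cone point. (One also gets $X \iso f^*(\bE G)$ over $f := F/G$ as in Corollary~\ref{cor:Segal}, though finality needs only $F$.)

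For uniqueness up to $G$-homotopy I would use the join-mixing argument. First I would establish the $G$-homeomorphism $\bE G \mathop{\bigcirc} \bE G = G^{\circ\aleph_0} \mathop{\bigcirc} G^{\circ\aleph_0} \xrightarrow{\ \approx\ } G^{\circ(\aleph_0 + \aleph_0)} = \bE G$ got by flattening the iterated coarse join along a bijection $\N \sqcup \N \iso \N$: it is a homeomorphism, not merely a continuous bijection, because on any Milnor join the total weight carried by a subfamily of coordinates is a continuous $[0,1]$-valued function (the sets where this total weight exceeds $a$, respectively falls below $b$, are unions of sets cut out by finite subsums of the coordinate weights, using that all weights sum to $1$), because $\aleph_0 + \aleph_0 = \aleph_0$, and because the map is injective so it suffices to check subbasic open sets, as in Proof~\ref{prop:cover_classifying}. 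Given $G$-maps $F_0, F_1 \colon X \to \bE G$, the weight-slide $H(x,u) := [(F_0(x), 1-u), (F_1(x), u)]$ is a $G$-map $X \x [0,1] \to \bE G \mathop{\bigcirc} \bE G$ (continuous by the same criterion; equivariant since $G$ fixes $[0,1]$), so composing with the flattening gives a $G$-homotopy from $\iota_0 \circ F_0$ to $\iota_1 \circ F_1$, where $\iota_0, \iota_1 \colon \bE G \to \bE G$ are the coordinate reindexings onto the images of the two copies of $\N$. Chaining with $\iota_k \circ F_k \simeq_G F_k$ then yields $F_0 \simeq_G F_1$.

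The step I expect to be the main obstacle is exactly $\iota_k \circ F_k \simeq_G F_k$ --- equivalently, that a coordinate-reindexing self-$G$-map of $\bE G$ along an injection $\N \hookrightarrow \N$ is $G$-homotopic to $\id_{\bE G}$. Every naive weight-sliding homotopy built inside $\bE G \mathop{\bigcirc} \bE G$ lands at a \emph{reindexed} copy of the map one started from rather than the map itself, so this ``push-off to infinity'' has to be imported: it is Milnor's classical observation that the infinite join absorbs copies (a join swindle), and it alternatively follows from Dold's covering-homotopy theorem for numerable principal bundles \cite{Dold} applied to $\bE G \to \bB G$, the base self-map $\iota_k/G$ being homotopic to $\id_{\bB G}$ with the homotopy lifting. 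A secondary nuisance is the topology bookkeeping for coarse cones and joins behind the word ``continuous,'' streamlined but not removed by Remark~\ref{rem:cone_metric}. (The statement is also the case $\cF = \{1\}$ of the preceding Corollary.)
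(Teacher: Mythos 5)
Your proposal is correct in outline, but it follows the classical Milnor--Dold--Husem\"oller route rather than the mechanism this paper actually runs on. The paper records this corollary essentially by citation (Husem\"oller 4:12.4, tomDieck \S3) as the $\cF=\{1\}$ case of the tomDieck corollary, and derives its finality statements from Theorem~\ref{thm:final}, whose two halves are: existence via the tube/classifying-map formula of Proof~\ref{thm:classify} (for numerable covers, made countable by Proposition~\ref{prop:countable}, this specializes to your partition-of-unity formula $F_n(x)=[\rho_n(x),u_n(px)]$), and uniqueness via Lemma~\ref{lem:homotopy}: a single $G$-map $\Phi\colon X\times\bI_2\longra E^{I\sqcup J}G$ into the \emph{non-Hausdorff} Segal model which equals $F_-$ at $-1$, $F_+$ at $+1$, and the combined classifying map at $0$, then composed with $\Delta\colon[-1,1]\longra\bI_2$ and with $\kappa+\kappa=\kappa$. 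Because the only neighborhood of the indiscrete conepoint is the whole space, switching coordinates on and off is automatically continuous there, so the paper never has to reindex a map back to itself. Your approach instead works directly in the Hausdorff coarse join $\bE G$, where that issue cannot be dodged: you correctly isolate $\iota_k\circ F_k\simeq_G F_k$ as the one genuinely nontrivial step. That step is exactly what Segal's non-Hausdorff trick is designed to eliminate, and in $\bE G$ it is not a formality --- it is the shift-coordinates-to-infinity homotopy of Milnor/Husem\"oller, carried out by infinitely many elementary moves on shrinking time intervals, whose continuity at the terminal time needs the finite-support structure of the join. Citing Dold's classification theorem for it is circular in spirit, since that theorem already contains the uniqueness you are proving; the honest reference is the explicit swindle. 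So: your existence half is complete and matches what the paper's general construction specializes to; your uniqueness half is the standard argument with its hardest lemma flagged but not proved, whereas the paper's uniqueness is obtained by a genuinely different (and, for the Segal model, strictly easier) device. What your route buys is a proof living entirely in the classical Hausdorff join $\bE G$, which is the space the corollary actually names; what the paper's route buys is uniformity across arbitrary cardinals and families $\cF$ at the cost of passing through the $T_0$-but-not-$T_1$ model $E^{\aleph_0}G$ and relating it to $\bE G$ only by the comparison map of Remark~\ref{rem:final}.
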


To state a stronger uniqueness, we require the notion of a stratified homotopy.
The following definition we amplify to preorders from partial-orders~\cite[2.6]{Hughes}; we shall need it in such generality and cannot assume closedness if $X$ is non-Hausdorff.

\begin{defn}[Hughes]\label{defn:stratified}
Let $\cP$ be a set with a preorder\footnote{Recall a \textbf{preorder} is a partial-order without antisymmetry: $a \preccurlyeq a$ holds and $a \preccurlyeq c$ if $a \preccurlyeq b \preccurlyeq c$.} $\preccurlyeq$.
A topological space $Y$ shall be \textbf{$(\cP,\preccurlyeq)$-filtered} if it is equipped with a set $\{Y^a\}_{a \in \cP}$ of subspaces where $Y = \bigcup_{a \in \cP} Y^a$ and $b \prec a$ implies $Y^b \subseteq Y^a$.
A continuous function $f: Y \longra Z$ of $(\cP,\preccurlyeq)$-filtered spaces is \textbf{$(\cP,\preccurlyeq)$-filtered} if $f(Y^a) \subseteq Z^a$ for each $a \in \cP$.
In particular, a map $f: Y \longra Z$ shall be \textbf{$(\cP,\preccurlyeq)$-stratified}\footnote{If $(\cP,\preccurlyeq)$ has upper topology \cite[before~II]{Aleksandrov}, $\preccurlyeq$ satisfies antisymmetry ($a=b$ if $a \preccurlyeq b \preccurlyeq a$), and each set $Y^a$ is closed, then $\{Y_a\}_{a \in \cP}$ is a $(\cP,\preccurlyeq)$-stratification in the sense of Lurie~\cite[A.5.1]{Lurie_HigherAlgebra}. If further the partition $\{Y_a\}_{a \in \cP}$ is locally finite, then it is a $(\cP,\preccurlyeq)$-decomposition in the sense of Goresky--MacPherson~\cite[1.1]{GM_book}. If the partially ordered set is finite and $Y^a$ is closed cofibrant in $Y^b$ if $a \prec b$, then $\{Y^a\}_{a \in \cP}$ is a $(\cP,\preccurlyeq)$-filtration of $Y$ in the sense of Weinberger~\cite[p115]{Weinberger_book}.} if $f(Y_a) \subseteq Z_a$ for each $a \in \cP$, where
\[
Y_a ~:=~ Y^a ~-~ \bigcup_{b \prec a} Y^b.
\]
The source $Y \x [-1,1]$ of a homotopy has stratification $(Y \x [-1,1])_a = Y_a \x [-1,1]$.
\end{defn}

\begin{exm}\label{exm:stratified}
Let $G$ be a topological group.
Let $\cF$ be a set of subgroups of $G$.
Write $(\cF)$ for the set of $G$-conjugacy classes of elements of $\cF$.
Define a preorder $\geqslant$ on $(\cF$) by: $(H) \geqslant (K)$ if $H$ contains a $G$-conjugate of $K$ (the reverse of \cite[3.5]{Khan_Lie}).
Let $X$ be a $G$-space with orbit types in $\cF$.
The \textbf{orbit-type filtration} of $X/G$ is
\[
(X/G)^{(H)} ~:=~ X^{(H)}/G ~=~ \left\{ xG \in X/G ~|~ \exists g \in G : H \subseteq G_{xg} \right\}.
\]
The \textbf{orbit-type stratification} of the orbit space $X/G$ is
\[
(X/G)_{(H)} ~:=~ X_{(H)}/G ~=~ \left\{ xG \in X/G ~|~ \exists g \in G : H = G_{xg} \right\}.
\]
By isovariance, the map $f$ (\ref{thm:classify}) is stratified and homotopy $\Phi/G$ (\ref{lem:homotopy}) is filtered\footnote{To see $\Phi/G: X/G \x \bI_2 \longra \sB_\cF^{I \sqcup J} G$ need not be stratified, take $K$ cohopfian in $X=K\bs G$ and $\kappa = 1, T_1 = X = T_2$ and $\phi_1(Kg) = Kg, \phi_2(Kg) = Kag$ with $a \notin N_G(K)$. Note $G_{H(K,-1)} = G_{(K,0)} = K$ and $G_{H(K,1)} = G_{(0,Ka)} = a\inv K a$, but note $G_{H(K,0)} = G_{(K,Ka)} = K \cap a\inv K a \neq K.$}.
\end{exm}

\begin{rem}\label{rem:BS}
In the preceding example, if $G$ is a Lie group and $\cF \subseteq \cpt(G)$, then it follows from Cartan's closed-subgroup theorem that $\geqslant$ is moreover a partial-order.
However, even for the solvable Baumslag--Solitar group
\[
G ~=~ BS(1,4) ~=~ \gens{ x,y ~|~ yxy\inv = x^4 }
~\cong~ \Z[{\textstyle\frac{1}{4}}] \rtimes_4 \Z,
\]
which is a 0-dimensional Lie group with the discrete topology, antisymmetry of $\geqslant$ fails for $\cF = \{ \gens{x}, \gens{x^2} \}$.
Therefore, for general $G$ and $\cF$, our Definition~\ref{defn:stratified} of filtered spaces is stated in terms of preorders, not the more familiar partial-orders.
\end{rem}

\subsection{Arbitrary Lie groups $G$ and isometric $G$-actions}

At first, the filtered homotopy $\Phi/G$ (\ref{lem:homotopy}) had a stratified strengthening \cite[\S2.7]{Palais_book} \cite[II:9.7]{Bredon_TG}.

\begin{thm}[Palais--Bredon]\label{thm:unique_PB}
Let $G$ be a compact Lie group.
Let $\cF \subseteq \cpt(G)$ be finite with no conjugate elements.
Consider Palais' join of Milnor's join (\ref{defn:Milnor_join}):
\[
\bE_\cF^n G ~:=~ \bigoasterisk_{H \in \cF} (H \bs G)^{\circ n}
\]
for some $n \in \N$.
Let $B$ be an $(\cF)$-filtered metrizable space of covering dimension~$< n$.
Suppose $f, g: B \longra \bB_\cF^n G := \bE_\cF^n G / G$ are stratified maps.
If $f^*(\bE_\cF^n G)$ and $g^*(\bE_\cF^n G)$ are $G$-homeomorphic over the identity $\id_B$, then there exists a stratified homotopy from $f$ to $g$.
\end{thm}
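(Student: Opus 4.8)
The plan is to reduce the stratified-homotopy statement to a stratum-by-stratum obstruction-theoretic argument, exploiting the finiteness of $\cF$ (so that $(\cF)$ is a finite partially ordered set by Remark~\ref{rem:BS}) and the dimension hypothesis $\dim B < n$. First I would set $P^* := f^*(\bE_\cF^n G)$ and observe that a $G$-homeomorphism $P^* \to g^*(\bE_\cF^n G)$ over $\id_B$ amounts to a $G$-map $\widehat{g}: P^* \to \bE_\cF^n G$ covering $g$ whose underlying data agrees with the tautological $\widehat{f}: P^* \to \bE_\cF^n G$ covering $f$ on orbit spaces; what must be produced is a $G$-homotopy $P^* \x [-1,1] \to \bE_\cF^n G$ from $\widehat f$ to $\widehat g$ that is \emph{fiberwise stratified}, i.e.\ the orbit-type stratum of $P^*$ indexed by $(H)$ maps into $(\bE_\cF^n G)_{(H)}$ throughout. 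The key point is that for a \emph{compact} Lie group $G$ and $H\in\cF$, the stratum $(H\bs G)^{\circ n}$ of $\bE_\cF^n G$ lying over orbit type $(H)$ is, after passing to the $H$-fixed set and collapsing, an $(n-1)$-connected space — precisely the $n$-fold Milnor join of the discrete-up-to-$G/N_G(H)$ set $H\bs G$ — so that Milnor's standard connectivity estimate for joins applies.

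Next I would carry out a downward induction over the finite poset $(\cF)$, working from maximal conjugacy classes $(H)$ (the closed strata, which are the ``smallest'' orbit-type pieces $X^{(H)}_{\max}$) toward minimal ones. At stage $(H)$ one has, restricting everything to the closed $G$-subset $B^{(H)} = \bigcup_{(K)\geqslant(H)} B_{(K)}$, a homotopy already constructed over $B^{(H)} - B_{(H)}$ (all strictly higher strata, handled at earlier inductive stages) and must extend it over the stratum $B_{(H)}$. Here I invoke: (i) Palais' slice theorem / the isovariant tube cover to see that near $B_{(H)}$ the bundle $P^*$ is modeled on $S \x_{N_G(H)} G$-type neighborhoods, reducing the equivariant extension problem over the $G$-space to a nonequivariant one over the stratum $B_{(H)}$ itself with values in the $H$-fixed stratum $\mathrm{Fix}_H(\bE_\cF^n G) \cap (\bE_\cF^n G)_{(H)}$; and (ii) that $B_{(H)}$ (a locally closed subset of the metrizable space $B$) has covering dimension $< n$, so by the classical extension theorem for maps into $(n-1)$-connected spaces (Whitehead's extension lemma, valid over metrizable — equivalently paracompact, finite-dimensional — bases) the partial homotopy, defined on $B_{(H)} \x \{-1,1\} \cup (\text{frontier}) \x [-1,1]$, extends over $B_{(H)} \x [-1,1]$. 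Reassembling these strata-wise extensions and using the closed-cofibration structure of the orbit-type filtration (for compact Lie $G$ on metrizable spaces, each $B^{(K)}$ is closed and the inclusions are $G$-cofibrations, cf.\ \cite[II:9.7]{Bredon_TG}) yields a globally defined fiberwise-stratified $G$-homotopy, which descends to the desired stratified homotopy $f \simeq g$.

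The main obstacle I anticipate is \textbf{the gluing/naturality bookkeeping across strata}, not the connectivity input: one must ensure the homotopy built over $B_{(H)}$ agrees on the frontier $\overline{B_{(H)}} - B_{(H)}$ with what was built at the earlier (higher) stages, which forces the extension problem to be set up relative to that frontier — hence it is really an extension over the pair $(B^{(H)}, B^{(H)} - B_{(H)})$, and one needs the relative version of the Whitehead extension lemma together with a compatible choice of tubular-neighborhood retractions. This is where Palais--Bredon's finiteness of $\cF$ and their finite-dimensionality of $B$ are both essential: finiteness of $(\cF)$ makes the induction terminate, while $\dim B < n$ (not merely $\dim B_{(H)} < n$, though that is what is directly used) guarantees every obstruction class lives in a cohomology group $H^{k}(B_{(H)}, \mathrm{frontier}; \pi_{k-1}) = 0$ for $k \leqslant n$ since the target stratum is $(n-1)$-connected. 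The remaining details — continuity of the reassembled $G$-map, $G$-equivariance, and the passage $\id \x \Delta$ through $\bI_2$ as in Lemma~\ref{lem:homotopy} to realize the homotopy on orbit spaces — are routine and follow the template of Proof~\ref{lem:homotopy} and \cite[II:9.7]{Bredon_TG}.
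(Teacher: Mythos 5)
The paper does not actually reprove this statement; it is quoted from Palais and Bredon as motivation, and the paper's own Theorem~\ref{thm:homotopy} replaces the finite join by the infinite one precisely so that the connectivity-plus-dimension input you rely on can be swapped for absolute-extensor theory ($\bE W \in \text{AE}(\cM)$, Lemma~\ref{lem:Banakh}), with no dimension bound at all. Measured against the Palais--Bredon argument you are reconstructing, your opening reduction is right: a $G$-homeomorphism over $\id_B$ converts the problem into extending an isovariant $G$-map from $f^*(\bE_\cF^n G)\times\{0,1\}$ over $f^*(\bE_\cF^n G)\times[0,1]$, exactly as in Proof~\ref{thm:homotopy}. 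But your organization of the extension is not theirs, and the difference matters. Palais--Bredon (and Lemma~\ref{lem:PB_cone} here) do \emph{not} induct downward over the poset $(\cF)$ and glue stratum by stratum: they extend the join \emph{coordinatewise}. For each $H\in\cF$ separately, the $H$-th cone coordinate $A \longra C((H\bs G)^{\circ n})$ is extended over all of $X$ subject to the single condition that it avoid the conepoint on $X_{(H)}$, and isovariance of the assembled map is then automatic because these conditions for distinct $H$ are independent of one another. The ``gluing/naturality bookkeeping across strata'' that you single out as the main obstacle is precisely what this coordinatewise scheme is designed to eliminate; your frontier-relative poset induction reintroduces it without supplying a mechanism to resolve it.

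The sharper gap is the connectivity count. Milnor's join of $n$ nonempty spaces is $(n-2)$-connected, not $(n-1)$-connected, and the fiber relevant over the $(H)$-stratum is an $n$-fold join built from the possibly disconnected group $W_G(H)$ (cf.\ Lemma~\ref{lem:Gextensor}), hence only $(n-2)$-connected in general. With $\dim B\leqslant n-1$, the obstruction to a vertical homotopy of sections in degree $k=n-1$ lies in a group of the form $H^{n-1}(-;\pi_{n-1}(\text{fiber}))$, which is killed neither by connectivity nor by the dimension of $B$; your asserted vanishing ``$H^{k}(\ldots;\pi_{k-1})=0$ for $k\leqslant n$'' silently assumes one more degree of connectivity than is available. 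This is not cosmetic: already for $G=\Z/2$, $\cF=\{1\}$, $n=2$, $B=S^1$, the degree-$1$ and degree-$3$ maps $S^1\longra \R P^1=\bB^2_{\{1\}}G$ pull back isomorphic double covers over $\id_{S^1}$ but are not homotopic, so the uniqueness range for the $n$-fold join sits one dimension below the existence range, and any obstruction-theoretic proof must track that shift explicitly rather than absorb it into an off-by-one connectivity claim.
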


We generalize $G, \cF, n$ following their strategy, but we implement it differently.

\begin{thm}\label{thm:homotopy}
Let $G$ be an arbitrary Lie group.
Let $\cF \subseteq \cpt(G)$ with no conjugate elements.
Consider our unrestricted isovariant join (\ref{defn:Khan_join}) of copies of Milnor's infinite join:
\[
\bE_\cF G ~:=~ \bigbcoast_{H \in \cF} \bE(H \bs G).
\]
Let $B$ be an $(\cF)$-filtered metrizable space.
Suppose $f, g: B \longra \bB_\cF G := \bE_\cF G / G$ are stratified maps (\ref{defn:stratified}).
If $f^*(\bE_\cF G)$ and $g^*(\bE_\cF G)$ are $G$-homeomorphic over the identity $\id_B$, then there exists a stratified homotopy from $f$ to $g$.
\end{thm}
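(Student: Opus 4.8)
The plan is to extract, from the given $G$-homeomorphism over $\id_B$, a $G$-homotopy that is isovariant at every time and then to descend it. Put $P := f^*(\bE_\cF G)$, so that $P/G = B$ and the projection onto the second factor is a $G$-map $F^- : P \longra \bE_\cF G$ inducing $f$; composing the given $G$-homeomorphism $\Theta : P \xrightarrow{\,\iso_G\,} g^*(\bE_\cF G)$ (over $\id_B$) with the projection of $g^*(\bE_\cF G)$ onto $\bE_\cF G$ gives a second $G$-map $F^+ : P \longra \bE_\cF G$ inducing $g$. Both are isovariant: since $P$ lies over the trivially-acted space $B$ one has $G_p = G_{F^-(p)}$ automatically, and $\Theta$ preserves isotropy. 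It therefore suffices to build a $G$-homotopy $\widehat\Phi : P \x [-1,1] \longra \bE_\cF G$ from $F^-$ to $F^+$ with $G_{\widehat\Phi(p,t)} = G_p$ for all $(p,t)$: since $[-1,1]$ is compact, $\widehat\Phi$ then descends to a continuous $\Phi := \widehat\Phi/G : B \x [-1,1] \longra \bB_\cF G$, which restricts to $f$ and $g$ on the ends and sends each $B_{(K)} \x [-1,1]$ into $(\bB_\cF G)_{(K)}$, i.e.\ is a stratified homotopy from $f$ to $g$.

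To build $\widehat\Phi$ I would follow Bredon's ``swinging'' argument \cite[II:9.5, II:9.7]{Bredon_TG}, but using the unrestricted coarse join $\bigbcoast$ (\ref{defn:Khan_join}) in place of Palais' normalized one --- precisely to avoid the renormalization that, as noted after Definition~\ref{defn:Palais_join}, ``hemorrhages'' once the index set is infinite. Step one, mimicking Bredon's first homotopy: $G$-homotope $F^-$ and $F^+$, isovariantly at every time, onto two disjoint co-infinite subsets of the countable coordinate set of each inner factor $\bE(H\bs G) = (H\bs G)^{\circ\aleph_0}$. The move that relocates a coordinate is performed inside $\big((H\bs G)^{\circ 2}\big)^{\circ\aleph_0} \iso \bE(H\bs G)$, letting the join parameter of $(H\bs G)^{\circ 2}$ carry each coordinate's outgoing element alongside its incoming one; a coordinate then carrying two elements $y, y'$ of $H\bs G$ has isotropy $G_y \cap G_{y'}$, and since no coordinate's datum is ever dropped the total isotropy stays equal to $G_p$. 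No bound on $\dim B$ is needed, because infinitely many coordinates are available a priori, and no renormalization is needed, because $\bigbcoast$ imposes no sum condition on coordinates. Step two: linearly interpolate the two relocated maps --- interpolate the outer coarse-cone parameters affinely, rescale the inner weights of $F^-$ by $\tfrac{1-t}{2}$ and those of $F^+$ by $\tfrac{1+t}{2}$ onto the two coordinate-subsets, then renormalize the inner weights to sum to $1$; continuity into the product-of-coarse-cones topology is the coordinatewise criterion of Remark~\ref{rem:cone_metric}. This interpolation is isovariant throughout: on any $H$-coordinate with positive cone parameter the isotropy is $G_{(F^-(p))_H} \cap G_{(F^+(p))_H} \supseteq G_p$, so the outer intersection contains $G_p$; and for $t<1$ an isovariance witness of $F^-(p)$ survives while for $t>-1$ one of $F^+(p)$ survives, so $\widehat\Phi(p,t)$ lands in $\bigbcoast$ with isotropy exactly $G_p$ and never collapses onto $\{0\}$.

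The step I expect to be the main obstacle is step one: relocating the coordinate data of $F^\pm$ inside an infinite Milnor join \emph{while never enlarging the isotropy beyond $G_p$ and never leaving the isovariant join} (i.e.\ always keeping some coordinate that witnesses the isotropy), as parameter mass is redistributed. This is exactly the point at which Palais' finite-support join breaks down in infinite dimensions; replacing it by the coarse, unnormalized model $\bigbcoast$ is what makes the relocation --- and hence the whole stratified homotopy --- go through with no finiteness hypothesis on $\cF$ and no dimension hypothesis on $B$.
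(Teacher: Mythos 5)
Your reduction---replace the given $G$-homeomorphism over $\id_B$ by two isovariant $G$-maps $F^\pm$ on $P=f^*(\bE_\cF G)$ and then produce an isovariant $G$-homotopy between them---matches the paper's setup exactly: the paper likewise defines an isovariant $G$-map $\vphi$ on the ends $A:=f^*(\bE_\cF G)\x\{0,1\}$ of the cylinder $X:=f^*(\bE_\cF G)\x[0,1]$. From there the two arguments diverge completely. The paper does not construct the homotopy by hand; it checks that $X\in(G,\cF)\text{-}\cM$ (using Theorem~\ref{thm:Banakh_generalized} and the countability of conjugacy classes in $\cpt(G)$ to put a $G$-invariant metric on $\bE_\cF G$) and then quotes Theorem~\ref{thm:extensor}, the isovariant absolute $G$-extensor property of $\bE_\cF G$, so that the extension over the interior of the cylinder is automatic; the difficulty you isolate as ``step one'' is absorbed there and handled stratum-by-stratum via Lemma~\ref{lem:closed}, Lemma~\ref{lem:Gextensor}, and the cone lemma (Lemma~\ref{lem:PB_cone}). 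Your route---Milnor--Dold coordinate relocation inside each $\bE(H\bs G)$ followed by affine interpolation in the unrestricted join---is more elementary and, for this theorem taken in isolation, would bypass the equivariant ANR machinery entirely; your two key observations are exactly the right ones to make, namely that $F^\pm$ are genuinely isovariant (so the combined isotropy is $G_{F^-(p)}\cap G_{F^+(p)}=G_p$, which is precisely what fails in the non-stratified example in the footnote to Example~\ref{exm:stratified}, where one endpoint map is only equivariant) and that a witness coordinate for the $\bigbcoast$-condition survives both stages. Two caveats: this shortcut buys no independence from Theorem~\ref{thm:extensor}, which the paper still needs for the surjectivity half of Theorem~\ref{thm:unique}; and your step one is only a sketch---the continuity of the infinite composition of elementary moves in the subspace-of-product-of-coarse-cones topology of Definition~\ref{defn:Milnor_join}, and the bookkeeping guaranteeing that no datum is ever dropped while coordinates are cleared and refilled, must be written out in the style of the free case \cite{tomDieck_numerable} \cite{Husemoller} before the argument is complete.
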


The proof appears after some lemmas in the spirit of the Palais--Bredon strategy.

The first lemma generalizes \cite[II:9.2]{Bredon_TG} without using transfinite induction.
Therein, the members of $\cC$ had dimension $\leqslant n$ and $F$ was a compact $(n-1)$-connected polytope \cite[II:9.1]{Bredon_TG}.
Our $\cC$ shall be the class $\cM$ of metrizable spaces.
Recall that $Z$ is an \textbf{absolute extensor} for $\cC$, written $Z \in \text{AE}(\cC)$, means that for any $X \in \cC$ and closed subset $A \subset X$, any map $A \longra Z$ has an extension $X \longra Z$.

\begin{lem}\label{lem:extendsection_fiberbundle}
Let $\cC$ be a subclass of the class $\cP$ of paracompact Hausdorff spaces, such that any closed subset of any member of $\cC$ is a member of $\cC$.
Let $p: E \longra X$ be an $F$-fiber bundle with any structure group \cite[2.3]{Steenrod_book} and $X \in \cC$ and $F \in \text{AE}(\cC)$.
For any closed subset $A$ of $X$, any section $A \longra E$ extends to a section $X \longra E$.
\end{lem}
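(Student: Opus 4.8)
The plan is to reduce the global extension problem to a local one over a locally finite open cover, then glue using a partition of unity in a way that stays inside the fibers of $p$. First I would choose a locally finite open cover $\{W_\alpha\}_{\alpha \in A}$ of $X$ that trivializes the bundle, so that $p^{-1}(W_\alpha) \approx W_\alpha \x F$; such a cover exists because $X$ is paracompact Hausdorff, and by shrinking we may take a second open cover $\{V_\alpha\}$ with $\ol{V_\alpha} \subseteq W_\alpha$. Let $s_A \colon A \longra E$ be the given section over the closed set $A \subseteq X$. Over each $W_\alpha$ the composite $W_\alpha \cap A \longra p^{-1}(W_\alpha) \approx W_\alpha \x F \xrightarrow{\pr_F} F$ is a map from the closed subset $W_\alpha \cap A$ of $W_\alpha$ (and $W_\alpha \in \cC$ since $\cC$ is closed under... no — here I must be careful: $\cC$ need only be closed under \emph{closed} subsets, and $W_\alpha$ is open, not closed, in $X$). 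To fix this I would instead work with the closed sets $\ol{V_\alpha} \in \cC$: restrict to $\ol{V_\alpha} \cap A$, which is closed in $\ol{V_\alpha}$, obtain a map $\ol{V_\alpha} \cap A \longra F$, and use $F \in \text{AE}(\cC)$ to extend it to $\sigma_\alpha \colon \ol{V_\alpha} \longra F$; reading this back through the trivialization gives a local section $s_\alpha \colon \ol{V_\alpha} \longra p^{-1}(\ol{V_\alpha}) \subseteq E$ agreeing with $s_A$ on $\ol{V_\alpha} \cap A$.

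Next I would patch the local sections together. The subtlety is that sections of a general (nonprincipal, unstructured-structure-group) fiber bundle do not form an affine space, so one cannot simply take convex combinations of the $s_\alpha$ globally. The standard device is to order the index set $A$ well and build the section by transfinite recursion over initial segments — but the lemma explicitly promises a proof ``without using transfinite induction,'' so instead I would invoke the local contractibility/extension structure more cleverly: subdivide using the nerve. Concretely, pull back along the nerve: the locally finite cover $\{V_\alpha\}$ together with a subordinate partition of unity $\{\lambda_\alpha\}$ gives a map $\nu \colon X \longra |N|$ to the geometric realization of the nerve, and over each closed simplex the bundle is trivial; one extends the section skeleton-by-skeleton over the (finite-dimensional-on-compacta, but possibly infinite-dimensional globally) simplicial complex $|N|$, at each stage extending across a family of disjoint cells whose boundaries already carry a section, which is exactly an $\text{AE}(\cC)$ extension problem fiberwise over a closed subset. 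Since the cells of a fixed dimension are mutually disjoint, the extension at each dimension is a single application of the absolute-extensor property, and no transfinite process in the \emph{index set} is needed — only the (finite or $\omega$-indexed) skeletal filtration.

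The main obstacle is precisely this patching step: ensuring that extensions chosen over different pieces are compatible on overlaps \emph{and} land in $E$ rather than merely in some abstract total space, given that we have no structure group to rigidify the gluing. The clean way around it is to never glue two partial sections by hand, but to always extend a \emph{single} already-defined section from a closed set to a slightly larger closed set — over a trivializing piece this is a fiberwise $\text{AE}(\cC)$ problem, and $\text{AE}$ extensions over a closed subset are automatically compatible with whatever was already there. So the real content is (i) organizing $X$ so that ``slightly larger'' closed sets exhaust $X$ through a well-behaved (skeletal, hence non-transfinite) filtration, which is where paracompactness and the hypothesis ``$\cC$ closed under closed subsets'' are both used, and (ii) checking continuity of the resulting global section, which follows from local finiteness of the cover. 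I expect step (i) — getting an honest non-transfinite exhaustion that keeps every intermediate set inside $\cC$ — to be where the care is needed; the fiberwise extensions themselves are immediate from $F \in \text{AE}(\cC)$.
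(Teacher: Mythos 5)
Your local step and your closing philosophy (never glue two hand-chosen sections, only extend one already-defined section over one more closed trivializing piece at a time, so that agreement on the old part is automatic) are exactly right and match the paper. But the concrete mechanism you propose for making this exhaustion non-transfinite --- the nerve map $\nu\colon X \longra |N|$ and its skeletal filtration --- has a genuine gap: the bundle need \emph{not} be trivial over $\nu\inv(\ol{\sigma})$ for a closed simplex $\sigma$ of the nerve. If $\sigma$ has vertices $\alpha_0,\dots,\alpha_k$ then $\nu\inv(\ol{\sigma})$ is only contained in the finite \emph{union} $W_{\alpha_0}\cup\dots\cup W_{\alpha_k}$, not in any single $W_{\alpha_i}$ (already for $S^1$ covered by two arcs, $\nu\inv(\ol{\sigma})$ of the unique $1$-simplex is all of $S^1$, over which a nontrivial bundle is not trivial). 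Moreover the closed $n$-cells of a given dimension are not disjoint --- they share faces --- so the stage-$n$ extension is not ``a single application of the absolute-extensor property'' over one trivializing chart. So the fiberwise $\mathrm{AE}(\cC)$ problem you want at each skeletal stage is not actually available.

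The missing idea is the Milnor--tomDieck countable reduction (the paper cites \cite[Hilfsatz~2]{tomDieck_numerable}; compare Proposition~\ref{prop:countable} in this paper): from a locally finite trivializing open cover, the disjoint sets $V_F$ indexed by finite subsets $F$ with $\card F = n$ each lie in a single trivializing chart, so their union $V_n$ is again trivializing, and one obtains a \emph{countable} locally finite trivializing open cover $\{U_i\}_{i=1}^{\infty}$. Taking a locally finite closed refinement $\{C_i \subseteq U_i\}$ (Dowker), each $C_i$ lies in $\cC$ and is a single trivializing closed piece, and one then runs your ``extend over one more closed piece'' step by ordinary induction over $\N$: set $A_0 := A$, $A_{n+1} := A_n \cup C_{n+1}$, extend the $F$-valued coordinate over $C_{n+1}$ using $F \in \mathrm{AE}(\cC)$ applied to the closed subset $A_n \cap C_{n+1}$ of $C_{n+1} \in \cC$, and paste; continuity of the limit follows from local finiteness of $\{C_i\}$. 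This is the paper's proof; your argument becomes correct once the skeletal filtration is replaced by this countable exhaustion.
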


A structure group does not occur in \cite[II:9.2]{Bredon_TG} but does in his applications.

\begin{proof}
Since $X \in \cP$, there is a $p$-trivializing locally finite open cover of $X$.
The associated principal bundle has a trivializing locally finite open cover $\{U_i\}_{i=1}^\infty$ that is \emph{countable} \cite[Hilfsatz~2]{tomDieck_numerable}, which works also for the $F$-fiber bundle $p$.
There is a closed refinement $\{C_i \subset U_i\}_{i=1}^\infty$ \cite[2]{Dowker4}, which is locally finite and $p$-trivializing.

Write $A_0 := A$ and $A_{n+1} := A_n \cup C_{n+1}$.
Inductively assume a section $A_n \longra E$ exists extending $A \longra E$ for some $n > 0$.
Since $p\inv(C_{n+1}) \approx C_{n+1} \x F$, sections $C_{n+1} \longra E$ correspond bijectively to maps $C_{n+1} \longra F$.
Then the section $A_n \cap C_{n+1} \longra E$ corresponds to a map $A_n \cap C_{n+1} \longra F$.
Since $A_n$ is closed in $X$, we have $A_n \cap C_{n+1}$ is closed in $C_{n+1} \in \cC$.
Then there is an extension $C_{n+1} \longra F$.
Equivalently, the section $A_n \cap C_{n+1} \longra E$ extends to a section $C_{n+1} \longra E$.
By the pasting lemma, we obtain a section $A_{n+1} \longra E$.
We are done by induction.
\end{proof}

T\,O~Banakh proved the following observation using direct methods \cite[1.3]{Banakh}.
Indirectly, this already followed from Haver \cite{Haver} with Dold \cite[Proof~8.1]{Dold}.

\begin{lem}[Banakh]\label{lem:Banakh}
Let $W$ be a Lie group.
Then Milnor's join $\bE W \in \text{AE}(\cM)$.
\end{lem}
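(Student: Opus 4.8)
The plan is to verify that $\bE W = W^{\circ\aleph_0}$ (Definition~\ref{defn:Milnor_join}) is metrizable, contractible, locally contractible, and countable-dimensional; Haver's theorem then makes it an ANR, and a contractible metrizable ANR is an absolute retract for metrizable spaces, i.e.\ a member of $\text{AE}(\cM)$. Metrizability is immediate from Remark~\ref{rem:cone_metric}: for any metric $d$ on the Lie group $W$ the coarse cone $CW$ carries the metric $Cd$, so $\prod_{n\in\N}CW$ is metrizable and $\bE W$ is a subspace. Contractibility is the classical contractibility of an infinite Milnor join, whose contracting homotopy consists of linear slides of the mass-coordinates with colors carried along, hence is continuous into the coarse cones by the coordinatewise criterion of Remark~\ref{rem:cone_metric} (the mass-parts vary affinely, the color-parts are inherited wherever the mass stays positive). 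Countable-dimensionality holds because $\bE W$ is the union of the countably many subspaces $W^{\circ S}$, $S\subseteq\N$ finite (the points supported in $S$), each of which is a finite join of the finite-dimensional space $W$ and hence finite-dimensional; so $\bE W$ is a countable union of finite-dimensional subspaces.

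The remaining, and genuinely substantive, point is \emph{local contractibility}, which I expect to be the main obstacle. Fix $p \in \bE W$; its support $J$ is finite. A basic neighborhood $N$ of $p$ pins every coordinate indexed by $J$ to lie within $\delta$ of its value in $p$ and, since the $J$-masses of $p$ sum to $1$, forces the total off-$J$ mass of each point of $N$ to be less than $|J|\,\delta$. I would contract $N$, within a neighborhood of $p$ of controlled size, in two stages. First, ``drain'' the off-$J$ mass back into $J$ by the homotopy whose coordinate-$n$ mass at time $\tau$ is $(1-\tau)s_n + \tau\,[n\in J]\,s_n/\sigma$, with colors unchanged, where $\sigma$ is the total $J$-mass of the point and satisfies $\sigma \geqslant 1 - |J|\,\delta > 0$ throughout $N$; a short verification with the criterion of Remark~\ref{rem:cone_metric} shows this homotopy is continuous, keeps the $J$-coordinates positive, preserves total mass $1$, and at time $1$ lands in the finite-dimensional slice of points supported exactly on $J$ --- a copy of the open $(|J|{-}1)$-simplex times $\prod_{n\in J}B_\delta(w_n)$. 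Second, contract that slice onto $p$: the open simplex linearly onto $p$'s barycentre, and each ball $B_\delta(w_n)$ onto the corresponding color of $p$ inside $W$, which is legitimate because the Lie group $W$ is locally contractible and only the finitely many colors of $p$ occur. Taking $\delta$ small against these finitely many local contractions of $W$ keeps the composed homotopy inside the prescribed neighborhood, establishing local contractibility.

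With metrizability, local contractibility, and countable-dimensionality in hand, Haver's theorem \cite{Haver} gives that $\bE W$ is an ANR; being moreover contractible, it is an absolute retract for metrizable spaces, i.e.\ $\bE W \in \text{AE}(\cM)$. This realizes the ``indirect'' argument alluded to above: metrizability and local contractibility of $\bE W$ may alternatively be quoted from Dold \cite[Proof~8.1]{Dold}, after which Haver supplies the ANR step, while Banakh's ``direct'' proof \cite[1.3]{Banakh} instead extends maps into $\bE W$ by hand using partitions of unity.
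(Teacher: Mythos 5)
Your argument is correct in outline, and it is precisely the ``indirect'' route that the paper itself acknowledges in the sentence preceding the lemma (``this already followed from Haver with Dold'') --- but it is not the route the paper actually takes. The paper's own self-contained proof is the $G=1$ specialization of the final paragraph of Proof~\ref{thm:Banakh_generalized}: embed $W$ as a closed subset of its Arens--Eells space $N$ \cite{ArensEells}; show $\bE N \in \text{AE}(\cM)$ by embedding the coarse cone $CN$ as a convex subset of $N\oplus\R$ and applying Dugundji's extension theorem \cite{Dugundji} (Lemma~\ref{lem:notBanakh} with $K=1$); build an explicit neighborhood retraction $R$ of $\bE W$ inside $\bE N$ from a Urysohn-type function $\eta$ and the weighted sum $s[x,t]=\sum_i \eta(x_i)\,t_i$; and finally upgrade $\text{ANE}(\cM)$ to $\text{AE}(\cM)$ using contractibility \cite[8.1]{Dold} and \cite[III:7.2]{Hu}. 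The trade-off is this: the paper's route needs no dimension-theoretic input at all and is built to generalize equivariantly (which is the entire point of Theorem~\ref{thm:Banakh_generalized}), whereas your route is more elementary but leans on Haver's theorem, whose standard statement \cite{Haver} requires the space to be a countable union of finite-dimensional \emph{compacta}, not merely countable-dimensional. Your decomposition of $\bE W$ into the closed finite sub-joins $W^{\circ S}$ therefore suffices only when $W$ is $\sigma$-compact (e.g.\ second countable); for a Lie group with uncountably many components you would need either a strengthened form of Haver's theorem or the paper's Dugundji-based argument, so you should either add that hypothesis or cite the precise version of Haver you intend. Your local-contractibility analysis (drain the off-$J$ mass back into $J$, then contract the resulting finite slice using local contractibility of the manifold $W$), checked against the continuity criterion of Remark~\ref{rem:cone_metric}, is sound and is exactly the part that neither Dold nor the paper writes out; if you keep this route, that computation is the content worth retaining.
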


For any class $\cC$ of topological $G$-spaces, a $G$-space $Z$ is an \textbf{absolute $G$-extensor} for $\cC$, written $Z \in G\text{-AE}(\cC)$, if for any closed $G$-subset $A \subset X \in \cC$, any $G$-map $A \longra Z$ extends to a $G$-map $X \longra A$.
Write $G\text{-}\cM$ for the class of Palais (\ref{defn:Palais_action}) $G$-metrizable spaces.
Here, \textbf{$G$-metrizable} means there is a $G$-invariant metric.
Furthermore, a $G$-space $Z$ is an \textbf{absolute neighborhood $G$-extensor} for $\cC$, written $Z \in G\text{-ANE}(\cC)$, if for any closed $G$-subset $A \subset X \in \cC$, any $G$-map $A \longra Z$ admits an extension to a $G$-map $U \longra Z$ for some $G$-neighborhood $U$ of $A$ in $X$.

\begin{lem}\label{lem:notBanakh}
Let $K$ be a compact Lie group.
For any \textbf{$K$-normed linear space} $V$: a vectorspace with linear $K$-action and $K$-invariant norm, $\bE V \in K\text{-ANE}(K\text{-}\cM)$.
\end{lem}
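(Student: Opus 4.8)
The plan is to reduce the problem, via the orbit map $\pi\colon X\longra X/K$, to two extension problems --- one over $X/K$, one over $X$. The structural observation driving this is a dictionary: since $K$ acts diagonally on $\bE V=V^{\circ\aleph_0}$ and fixes every cone coordinate, a $K$-map $g\colon Z\longra\bE V$ amounts to a point-finite partition of unity $\{u_n\colon Z/K\longra[0,1]\}$ --- with $u_n$ the ($K$-invariant, hence orbit-space-factored) $n$-th cone coordinate of $g$ --- together with a $K$-map $g_n\colon\pi^{-1}(\{u_n>0\})\longra V$ for each $n$; conversely such data reassemble into a $K$-map, and continuity of $g$ is equivalent to continuity of all the $u_n$ and all the $g_n$ on their open domains. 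This is exactly the coarse-cone continuity criterion of Remark~\ref{rem:cone_metric}, applied factorwise inside $\prod_n CV$; it uses nothing about $Z$.

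Now let $A$ be a closed $K$-subset of $X\in K\text{-}\cM$ and $f\colon A\longra\bE V$ a $K$-map, with data $\{u_n\}$ on $A/K$ and $\{f_n\}$. Because $K$ is compact, $K\text{-}\cM$ is merely the class of metrizable $K$-spaces (closed under open $K$-subspaces), $X/K$ is metrizable, and $\pi$ is a closed map, so $A/K$ is closed in $X/K\in\cM$. First, a point-finite partition of unity on a space is the same datum as a map to the convex set $\bE(\{1\})=\{1\}^{\circ\aleph_0}\subseteq\prod_n\R$, so it extends over any closed subset of a metrizable space by Dugundji's theorem (equivalently $\bE(\{1\})\in\text{AE}(\cM)$, a special case of Lemma~\ref{lem:Banakh}); extend $\{u_n\}$ to a point-finite partition of unity $\{v_n\}$ on $X/K$. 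Put $U_n:=\pi^{-1}(\{v_n>0\})$, an open $K$-subspace of $X$, hence in $K\text{-}\cM$, in which $A_n:=U_n\cap A$ is a closed $K$-subset; the $K$-map $f_n\colon A_n\longra V$ now asks to be $K$-extended. If $V\in K\text{-AE}(K\text{-}\cM)$ one extends each $f_n$ over all of $U_n$ and reassembles a $K$-map $X\longra\bE V$ by the dictionary; if only $V\in K\text{-ANE}(K\text{-}\cM)$ one extends over $K$-neighborhoods and, using point-finiteness of $\{v_n\}$, shrinks to a common open $K$-neighborhood $U\supseteq A$ on which all the relevant $g_n$ are defined, reassembling a $K$-map $U\longra\bE V$ extending $f$.

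Thus the whole lemma rests on $V$ being a $K$-absolute neighborhood extensor for $K\text{-}\cM$, and this I expect to be the main obstacle. Non-equivariantly $V$ is a convex subset of a normed space, so $V\in\text{AE}(\cM)$ by Dugundji. To equivariantize: extend a $K$-map $A_n\longra V$ to an ordinary map $U_n\longra V$ by Dugundji, then average over the normalized Haar measure of the compact group $K$ (using that the $K$-action is linear and isometric); the average is a $K$-map agreeing with the original on $A_n$. The catch --- and the reason the stated conclusion is only a \emph{neighborhood} extensor --- is that the Haar integral of a continuous $V$-valued function on $K$ a priori lands in the completion of $V$; when $V$ is a Banach $K$-module this is no issue and one in fact obtains $V\in K\text{-AE}(K\text{-}\cM)$ and hence $\bE V\in K\text{-AE}(K\text{-}\cM)$ outright, but for a merely normed $V$ the averaging must be confined to suitable $K$-neighborhoods, whence ``ANE''. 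For comparison, the more direct attempt --- exhibiting $\bE V$ as a $K$-neighborhood retract of the $K$-AE $\prod_n CV$ by Milnor's renormalization --- is hampered by the Tikhonov topology, in which no open set pins down all but finitely many coordinates, so the renormalization would need a locally finite refinement of the cover $\{t_n>0\}$ (essentially re-deriving Lemma~\ref{lem:Banakh} by hand); routing through the orbit space sidesteps this.
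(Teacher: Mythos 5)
Your architecture is genuinely different from the paper's: you decompose a $K$-map into $\bE V$ as a point-finite partition of unity on the orbit space together with coordinate $K$-maps into $V$, whereas the paper embeds each coarse cone $CV$ into $V\oplus\R$ via $[x,t]\longmapsto\bigl(tx/(1+\|x\|),t\bigr)$, observes that the image of $\bE V$ is a \emph{convex} $K$-subset of the locally convex metrizable linear $K$-space $(V\oplus\R)^{\aleph_0}$, and applies Antonyan's equivariant Dugundji theorem \cite{Antonyan5} once. Your dictionary and the non-equivariant extension of $\{u_n\}$ are fine, but the reduction leaves two genuine gaps. First, the step you yourself flag as the main obstacle, $V\in K\text{-ANE}(K\text{-}\cM)$, is not proved: the Haar average of a Dugundji extension is a limit of convex combinations of values in $V$ and so lands only in the completion $\widehat{V}$, and ``confining the averaging to suitable $K$-neighborhoods'' does not address this --- shrinking the domain does nothing to force the integral back into an incomplete $V$. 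The statement is true, but the tool that proves it is precisely Antonyan's theorem on convex $K$-subsets of locally convex metrizable linear $K$-spaces, i.e., the very ingredient the paper applies directly to $\bE V$; your route does not avoid it, it only postpones it (and, had you full $K\text{-AE}$, your first branch would in fact yield the stronger conclusion $\bE V\in K\text{-AE}(K\text{-}\cM)$ --- but that needs completeness or Antonyan, not averaging).

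Second, even granting $V\in K\text{-ANE}(K\text{-}\cM)$, the reassembly in your neighborhood branch fails. The extension $\{v_n\}$ of $\{u_n\}$ is only point-finite, not locally finite: a map to $\bE(\{1\})$ in the Tikhonov topology controls only finitely many coordinates on any basic open set, so near a point $a\in A$ there may be infinitely many $n$ with $v_n(\pi a)=0$ yet $v_n>0$ arbitrarily close to $a$. For such $n$ the set $A_n=U_n\cap A$ can be empty, hence the neighborhood extension $W_n$ of $A_n$ in $U_n$ can be empty, and then \emph{no} neighborhood $U$ of $A$ has the property that $g_n$ is defined wherever $v_n\circ\pi>0$ on $U$. (Concretely: $X=[0,1]$ with trivial action, $A=\{0\}$, $v_n$ for $n\geqslant 1$ supported in intervals accumulating at $0$.) Point-finiteness is exactly not enough here; you would need to re-choose the $v_n$ locally finitely near $A$ while still extending each $u_n$ coordinatewise, which is not automatic and is essentially the renormalization difficulty you correctly identified in your last sentence, reappearing on the orbit space.
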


A nonexample is the circle group $K=U_1$ and $V = C_b(\C, \R)$ with the sup-norm, due to failure of continuity of right-action $(f \cdot g)(x) := f(xg)$ \cite[Example~8:1]{Antonyan1}.

\begin{proof}
The \textbf{topological product} $P := (V\oplus\R)^{\aleph_0}$ is the algebraic direct product of $\R$-vectorspaces with product topology.
The topological vectorspace $P$ is metrizable and \emph{locally convex} but not normable.
(Also $P$ is Fr\'echet if and only if $V$ is Banach.)
To see that $P$ is locally convex \cite[\S II:4.1]{Bourbaki3}, recall $P$ has the coarsest topology for which each $n$-th projection $P \longra V\oplus\R$ is continuous.
The norm-topology on $V\oplus\R$ is the coarsest for which the norm and all of its vectorspace translations are continuous.
Thus the topology on $P$ is the coarsest for which each $n$-th seminorm, given by $n$-th projection then $n$-th norm, and all coordinatewise vectorspace translations are continuous \cite[\S II:1.2]{Bourbaki3}.
So $P$ is locally convex \cite[\S II:4.1]{Bourbaki3}.
The proof that $P$ is metrizable formally generalizes that for $V=0$: $\R^{\aleph_0}$ \cite[p547]{Tikhonov}.
Lastly, $P$ is not normable because any basic open neighborhood of $0$ contains a line.

Factoring through metric orbit spaces, by Tietze's extension theorem \cite[Satz~3]{Tietze}, we obtain that $\R \in K\text{-AE}(K\text{-}\cM)$.
Using the coordinate projections, observe that the $K$-action on $P$ is continuous; also $P \in K\text{-AE}(K\text{-}\cM)$ if $V \in K\text{-AE}(K\text{-}\cM)$.
Above, we implicitly used the sup-norm on $V \oplus \R$, namely: $\|(v,r)\| := \max\{\|v\|, |r|\}$.

Consider the bounded level-preserving $K$-injection from the coarse cone (\ref{defn:coarse_cone}):
\[
\iota: CV \longra V \oplus \R ~;~ [x,t] \longmapsto \left( \frac{tx}{1+\|x\|}, t \right).
\]
The restriction $\iota|(CV - \{0\})$ away from the conepoint is an embedding.
Note
\[
\iota\inv\{ (v,r) ~|~ \|(v,r)\| < \eps \} ~=~ \{ [x,t] ~|~ t < \eps \}.
\]
Thus $\iota$ is both continuous and open at the coarse conepoint.
So $\iota$ is a $K$-embedding.
Hence the product function $\iota^{\aleph_0}: (CV)^{\aleph_0} \longra P$ is a $K$-embedding.
Since the image $\iota(CV)$ in $V\oplus\R$ is convex, it follows that $\iota^{\aleph_0}(\bE V) \subset \iota^{\aleph_0}(CV)$ in $P$ is also convex.
Therefore, since $\bE V$ admits a $K$-embedding as a convex $K$-subset of a locally convex vectorspace $P$, and since $K$ is a compact Lie group, by Antonyan's partial generalization \cite{Antonyan5} of Dugundji's extension theorem, $\bE V \in K\text{-ANE}(K\text{-}\cM)$.
\end{proof}

The above variations of extensor, if a member of $\cC$ also, are spaces $Z$ which have the stated extension property specialized to when $A \longra Z$ is the identity map.
They are forms of the \textbf{retract} notion, denoted by the letter R instead of E \cite{Hu}.

The Lie hypothesis of Lemma~\ref{lem:notBanakh} is necessary; if $K$ is a non-Lie metric compact group, there are $K$-normed linear spaces not in $K\text{-ANE}(K\text{-}\cM)$ \cite[Theorem~6]{Antonyan1}.
Lemma~\ref{lem:Banakh} is a case of Banakh's lemma \cite[1.3]{Banakh} stated for all $W \in \text{ANR}(\cM)$.
The latter lemma shall be the $G=1$ case of the following equivariant generalization.

\begin{thm}\label{thm:Banakh_generalized}
Let $G$ be a Lie group.
Milnor's join (\ref{defn:Milnor_join}) defines a class-function
\[
\bE : G\text{-ANR}(G\text{-}\cM) \longra \text{AR}(\cM) \cap G\text{-}\cM \cap G\text{-ANE}(G\text{-}\cM) ~;~ Z \longmapsto \bE Z := Z^{\circ \aleph_0}.
\]
\end{thm}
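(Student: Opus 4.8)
The plan is to verify the three asserted memberships in turn: $\bE Z \in G\text{-}\cM$, $\bE Z \in \text{AR}(\cM)$, and $\bE Z \in G\text{-ANE}(G\text{-}\cM)$, the last being the real work. For $G\text{-}\cM$: take a $G$-invariant metric $d$ on $Z$, transport it to the $G$-invariant metric $Cd$ on the coarse cone $CZ$ of Remark~\ref{rem:cone_metric}, then to the $G$-invariant product metric $\hat d((x_i)_i,(y_i)_i):=\sup_i 2^{-i}\min\{1,Cd(x_i,y_i)\}$ on $\prod_i CZ$, and restrict to the subspace $\bE Z=Z^{\circ\aleph_0}$; this exhibits $\bE Z$ as $G$-metrizable. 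For the Palais property (\ref{defn:Palais_action}), note any $e=[x,t]\in\bE Z$ has a coordinate $j$ with $t_j>0$, so $\pr_j(e)$ lies in the copy $Z\x(0,1]\iso Z$ inside $CZ$; given $e,e'\in\bE Z$, a short case analysis on whether the barycentric vectors $t,t'$ are close shows that a transporter between small neighborhoods of $e$ and $e'$ is either empty (when $t\neq t'$, since $G$ preserves barycentric coordinates) or contained in a transporter $\langle\pr_j U,\pr_j U'\rangle_G$ inside $Z$, which is precompact because $Z$ is Palais. Hence $\bE Z\in G\text{-}\cM$.

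For $\text{AR}(\cM)$: forgetting the action, $Z$ is an ANR --- by the Palais slice theorem the proper $G$-action is locally of the form $S\x_K G$ with $K$ compact and $S$ a $K$-ANR hence an ANR, and $G$ a Lie group hence an ANR, so $Z$ is metrizable and locally an ANR, hence an ANR (compare Theorem~\ref{thm:extensor}). Banakh's lemma in its full generality \cite[1.3]{Banakh}, of which Lemma~\ref{lem:Banakh} is the Lie-group case, then gives $\bE Z=Z^{\circ\aleph_0}\in\text{AE}(\cM)$, so by the metrizability just established, $\bE Z\in\text{AR}(\cM)$.

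For $G\text{-ANE}(G\text{-}\cM)$: let $A$ be a closed $G$-subset of some $X\in G\text{-}\cM$ and $\psi\colon A\longra\bE Z$ a $G$-map; I will extend $\psi$ over a $G$-neighborhood of $A$, following the Palais--Bredon strategy as reorganized around Lemmas~\ref{lem:extendsection_fiberbundle} and~\ref{lem:notBanakh}. Since the $G$-action on $X$ is Palais and $G$ is Lie, the Palais slice theorem covers $X$ by tubes $S_m\x_{K_m}G$ with $K_m$ compact; as in Proof~\ref{lem:extendsection_fiberbundle} (tomDieck's countable trivializing cover plus a Dowker shrinking) there is a countable, locally finite closed refinement $\{C_m\}_{m\geqslant 1}$ with each $C_m$ inside such a tube. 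Put $A_0:=A$, $A_m:=A_{m-1}\cup C_m$, and extend $\psi$ inductively over $A_m$, now obtaining neighborhood extensions and patching by the pasting lemma, as $\bE Z$ is only an ANE. The inductive step extends a $G$-map over the closed $G$-subset $A_{m-1}\cap C_m$ of the tube $S_m\x_{K_m}G$; by the adjunction $G\text{-Map}(S\x_K G,\bE Z)\iso K\text{-Map}(S,\bE Z)$ this becomes a $K_m$-equivariant extension problem over a closed $K_m$-subset of the $K_m$-metrizable $S_m$ (automatically Palais, $K_m$ being compact). So it suffices to show: for $K$ a compact Lie group and $Z$ (restricted to $K$, a $K$-ANR by Theorem~\ref{thm:extensor}), one has $\bE Z\in K\text{-ANE}(K\text{-}\cM)$. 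For this, embed the $K$-ANR $Z$ as a $K$-neighborhood retract of a $K$-normed linear space $V$ (classical for compact Lie $K$), upgrade this to realize $\bE Z$ as a $K$-neighborhood retract of $\bE V$, and invoke Lemma~\ref{lem:notBanakh}, which puts $\bE V$, hence $\bE Z$, in $K\text{-ANE}(K\text{-}\cM)$.

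I expect the decisive obstacle to be precisely that ``upgrade'': passing a neighborhood retraction $r\colon W\longra Z$, with $W$ open in $V$, to one of $\bE Z$ inside $\bE V$. The difficulty is a genuine feature of the coarse cone (Remark~\ref{rem:cone_metric}): every neighborhood of the cone point ignores the base coordinate, so the naive candidate $\{[x,t]\in\bE V: x_i\in W\text{ whenever }t_i>0\}$ fails to be open, since a coordinate can jump from zero to small-positive (hence constrained) under an arbitrarily small perturbation while its base point stays unconstrained. I would circumvent this by first applying a $K$-equivariant renormalizing deformation of $\bE V$ that drives every coordinate small relative to the barycentric maximum down to the cone point itself: on the resulting subspace the finite support is locally constant, the naive neighborhood becomes genuinely open, the retraction $r$ may be applied coordinatewise, and composing with the deformation yields the sought neighborhood retraction. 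This is exactly where the coarse rather than fine cone is indispensable. The remaining points --- continuity of the patched extensions, and that open $G$-subspaces of Palais $G$-metrizable spaces remain in $G\text{-}\cM$ --- are routine.
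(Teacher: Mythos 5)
You correctly isolate the crux: a neighborhood retraction $r\colon W\longra Z$ inside a linear space $V$ does not naively induce one of $\bE Z$ inside $\bE V$, because coarse-cone neighborhoods of the cone point ignore the base coordinate. But your proposed repair does not work as stated: a renormalizing deformation that kills every coordinate whose weight $t_i$ is small relative to $\max_j t_j$ necessarily \emph{moves the points of $\bE Z$ itself} (it alters their barycentric coordinates), so the composite ``deform, then apply $r$ coordinatewise'' is not the identity on $\bE Z$ and hence is not a retraction; without a genuine retraction the standard transfer argument (extend into $\bE V$, then retract back onto $\bE Z$) collapses. The paper's device, due to Banakh, reweights by a function of the \emph{base} coordinate instead: a $G$-invariant Urysohn map $\eta$ on the ambient linear space with $\eta\equiv 1$ on $eZ$ and $\eta\equiv 0$ off the retraction domain $O$, the continuous sum $s[x,t]:=\sum_i\eta(x_i)t_i$, and $R[x,t]:=[r(x_i),\eta(x_i)t_i/s[x,t]]$ on the open set $s\inv(0,1]$. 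This is simultaneously well defined (coordinates with $x_i\notin O$ receive weight $0$), continuous, $G$-equivariant, and --- decisively --- the identity on $\bE(eZ)$. A threshold depending only on the weights cannot be $1$ on all of $\bE Z$ and $0$ outside $O$ at once, which is exactly why $\eta$ must see $x_i$.

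Your global-to-compact reduction also has an unaddressed patching problem. In the absolute-extensor induction of Lemma~\ref{lem:extendsection_fiberbundle} each step extends over \emph{all} of $C_{n+1}$ and agrees with the previous stage on $A_n\cap C_{n+1}$, so the pasting lemma applies verbatim; in your ANE version each step only yields an extension over an unspecified neighborhood, two such extensions need not agree where their domains overlap, and over infinitely many steps the neighborhoods can shrink so that their union contains no neighborhood of $A$. The paper sidesteps all of this: it takes a genuinely $G$-equivariant closed embedding $e\colon Z\longra (L-0)\x N$ \cite[3.10]{AAR} (not a separate $K$-embedding per tube), builds the single $G$-retraction $R$ above inside $\bE(L\oplus N)$, concludes $\bE Z\in K\text{-ANE}(K\text{-}\cM)$ for every compact $K\leqslant G$ from Lemma~\ref{lem:notBanakh}, and then passes to $G\text{-ANE}(G\text{-}\cM)$ in one stroke via Antonyan's neighborhood version \cite{Antonyan3} of Abels' induction theorem \cite{Abels}. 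Finally, note that citing Banakh's lemma in full generality for the $\text{AR}(\cM)$ clause is against the grain of the theorem, whose $G=1$ case \emph{is} that lemma; the paper instead reproves it independently using the Arens--Eells embedding and Dugundji's extension theorem.
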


We mostly repeat the second half of Banakh's proof and introduce Palais actions.
Also we remove his intermediate need for a convex subset and we fill in some details.
I did not fully understand the first half of Banakh's proof, which involved some sort of abstract convexity structure and an appeal to the proof of Dugundji's theorem, so I replaced it with my own Lemma~\ref{lem:notBanakh} which applies Antonyan's rigorous work.

\begin{proof}
Fix $Z \in G\text{-ANR}(G\text{-}\cM)$.
Since $G$ is locally compact and $Z \in G\text{-}\cM$, there exist a $G$-normed linear space $L$, a normed linear space $N$, and a closed $G$-embedding $e: Z \longra (L-0)\x N$ with open $G$-subset $L-0$ Palais \cite[3.10]{AAR}.
Since the $G$-action on $N$ is trivial, $(L-0)\x N$ is Palais (\ref{defn:Palais_action}) hence lies in $G\text{-}\cM$.
Then, since $Z \in G\text{-ANR}(G\text{-}\cM)$, there exists a $G$-retraction $r: O \longra e(Z)$ for some $G$-neighborhood $O$ of $e(Z)$ in $(L-0)\x N \subset L \oplus N$.
Consider the $G$-invariant map
\begin{equation}\label{eqn:eta}
\eta: L \oplus N \longra [0,1] ~;~ x \longmapsto \frac{d(x,C)}{d(x,eZ) + d(x,C)}
\end{equation}
where $C := L\oplus N - O$ and $d(x,S) := \inf_{y \in S} \|x-y\|$.
Note $\eta(C) = \{0\}$ and $\eta(eZ) = \{1\}$.
So $\eta$ for $\cM$ realizes the conclusion of Urysohn's lemma for $T_4$ spaces.

Using $r$ and $\eta$, next we reproduce Banakh's neighborhood retraction $R$ of $\bE (eZ)$ in $\bE(L\oplus N)$, and the map $R$ shall turn out to be $G$-equivariant.
Define a $G$-function
\[
s: \bE(L\oplus N) \longra [0,1] ~;~ [x,t] \longmapsto \sum_{i=0}^\infty \eta(x_i) t_i.
\]
To prove Banakh's assertion that $s$ is continuous, for any $i \in \N$ consider the function $\eta(x_i) t_i: C(L\oplus N) \longra [0,1]$ defined on the coarse cone of a $G$-normed linear space.
It is continuous away from the conepoint, since $\eta$ and multiplication are continuous.
Given $\eps>0$, taking $\delta = \eps$, if $| t_i - 0 | < \delta$ then $| \eta(x_i) t_i - 0 | \leqslant t_i < \eps$, so it is continuous.
Thus, as the $i$-th projection is continuous, for all $n \in \N$, the $n$-th partial sum is too:
\[
s_n : C(L\oplus N)^{\aleph_0} \longra [0,n] ~;~ [x,t] \longmapsto \sum_{i=0}^n \eta(x_i) t_i.
\]
Let $[x,t] \in \bE(L\oplus N)$.
Then $t_i = 0$ for some $n \geqslant 0$ and all $i>n$.
Let $\eps > 0$.
Since $s_n$ and each $[x,t] \longmapsto t_i$ are continuous, then in the product topology (see \ref{defn:Milnor_join}), there exists an open neighborhood $U$ of $[x,t]$ in the subspace $\bE(L\oplus N) \subset C(L\oplus N)^{\aleph_0}$ such that: if $[x',t'] \in U$ then $| s_n[x,t] - s_n[x',t'] | < \eps/2$ and $\sum_{i=0}^n t'_i > 1-\eps/2$; note
\begin{gather*}
s[x,t] - s[x',t'] = s_n[x,t] - s[x',t'] \leqslant s_n[x,t] - s_n[x',t'] < \eps/2 < \eps\\
s[x',t'] - s[x,t] = \sum_{i>n}^\infty \eta(x'_i) t'_i + s_n[x',t'] - s_n[x,t] < \sum_{i>n}^\infty t'_i + \eps/2 < \eps
\end{gather*}
so $| s[x,t] - s[x',t'] | < \eps$.
Thus $s$ is continuous.
Banakh's neighborhood retraction~is
\[
R : s\inv(0,1] \longra \bE (eZ) ~;~ [x,t] \longmapsto \left[ r(x_i), \frac{\eta(x_i) t_i}{s[x,t]} \right].
\]

Let $K \in \cpt(G)$.
By Lemma~\ref{lem:notBanakh}, $\bE(L\oplus N) \in K\text{-ANE}(K\text{-}\cM)$, as $G$ hence $K$ is Lie by Cartan's closed subgroup theorem \cite[27]{CartanE}.
Then $\bE Z \in K\text{-ANE}(K\text{-}\cM)$,~as $\bE Z$ is a neighborhood $K$-retract of $\bE(L\oplus N)$.
Thus $\bE Z \in G\text{-ANE}(G\text{-}\cM)$, by Antonyan's neighborhood version \cite[Thm~5]{Antonyan3} of Abels' induction theorem \cite[4.2]{Abels}.
Also, as $Z \in G\text{-}\cM$, the coarse cone $CZ \in G\text{-}\cM$ by Remark~\ref{rem:cone_metric}.
So the induced metric \cite[20.5]{Munkres} on the countable product $(CZ)^{\aleph_0}$ is $G$-invariant.
Hence $\bE Z \in G\text{-}\cM$.

Finally, we establish the fact that $\bE Z \in AE(\cM)$, independently of \cite[1.3]{Banakh}.
Take $G=1$ in the above arguments and simplify, as follows.
Take $L=0$ and the closed embedding $e: Z \longra N$ with $N$ Arens--Eells' space for $Z \in \cM$ \cite{ArensEells}.
Obtain $R$ as above.
In Lemma~\ref{lem:notBanakh} for $K=1$, replace \cite{Antonyan5} with Dugundji's extension theorem \cite{Dugundji}, to find $\bE N \in \text{AE}(\cM)$.
Then $\bE Z \in \text{ANE}(\cM)$, skipping \cite{CartanE} and \cite{Antonyan3}.
So $\bE Z \in \text{AE}(\cM)$ \cite[III:7.2]{Hu}, since $\bE Z$ is contractible \cite[8.1]{Dold}.
\end{proof}

Next, let $\cK$ be a set of compact subgroups of a locally compact Hausdorff~group~$G$.
Write $(G,\cK)\text{-}\cM$ for the Palais $G$-metrizable spaces with isotropy \emph{conjugate} into $\cK$.

\begin{lem}\label{lem:Gextensor}
Fix $H \in \lrg(G)$ (see \ref{defn:large}) for a locally compact Hausdorff group $G$.
Then the Palais $G$-space $\bE(H \bs G)$ is a member of the class $G\text{-AE}((G,\{H\})\text{-}\cM)$.
\end{lem}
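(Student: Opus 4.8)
The plan is to trade the $G$-equivariant extension problem over $X$ for an ordinary section-extension problem over the orbit space $B:=X/G$, and then to invoke Lemma~\ref{lem:extendsection_fiberbundle} together with Banakh's Lemma~\ref{lem:Banakh}. So fix $X \in (G,\{H\})\text{-}\cM$, a closed $G$-subset $A \subseteq X$, and a $G$-map $\phi\colon A \longra \bE(H\bs G)$; the goal is to extend $\phi$ over $X$. First I would record the structure of such an $X$: since $H$ is large compact and every isotropy group of $X$ is $G$-conjugate to $H$, Palais' slice theorem covers $X$ by tubes $S\x_H G$, and a tube of pure orbit type $(H)$ with $H$ large compact is forced to have $H$ act trivially on its slice $S$, hence is $G$-homeomorphic to $S\x(H\bs G)$. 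Therefore the orbit map $q\colon X\longra B$ is a locally trivial fiber bundle with fiber $H\bs G$ and structure group $\Aut_G(H\bs G)=N_G(H)/H=:WH$, and $B$ is metrizable, being the orbit space of a Palais $G$-metrizable space.

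The next step is to pass from $G$-maps to sections. For any $G$-space $Z$ there is a natural bijection $\mathrm{Map}_G(H\bs G, Z)\cong Z^H$, $f\longmapsto f(H)$; running this through the local trivializations of $q$ and checking compatibility with the transition functions, one obtains a natural bijection $\mathrm{Map}_G(X,Z)\cong\Gamma(B;\mathcal Z)$, where $\mathcal Z\longra B$ is the fiber bundle with fiber $Z^H$ and structure group $WH$ associated with $q$. Since Milnor's join (\ref{defn:Milnor_join}) commutes with passage to the $H$-fixed subspace, taking $Z=\bE(H\bs G)$ gives the fiber $F:=\bE(H\bs G)^H=\bE\big((H\bs G)^H\big)$; write $\xi:=\mathcal Z$ for this choice. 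By naturality the $G$-map $\phi$ on the closed $G$-subset $A$ becomes a section of $\xi$ over the closed subspace $A/G$ of $B$, and extending $\phi$ over $X$ is exactly extending this section over all of $B$.

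Finally I would apply the two lemmas. Since metrizable spaces are paracompact Hausdorff and closed subspaces of metrizable spaces are metrizable, Lemma~\ref{lem:extendsection_fiberbundle} with $\cC=\cM$ reduces the section-extension to verifying $F\in\mathrm{AE}(\cM)$. Here I would use that, $H$ being large compact, any conjugate of $H$ that contains $H$ must equal $H$, whence $(H\bs G)^H=N_G(H)/H=WH$; and $WH$ is a Lie group (in the intended application $H\in\cpt(G)$ with $G$ a Lie group, so $N_G(H)$ is closed and hence Lie). Thus $F=(WH)^{\circ\aleph_0}$ lies in $\mathrm{AE}(\cM)$ by Banakh's Lemma~\ref{lem:Banakh}, equivalently by the $G=1$ case of Theorem~\ref{thm:Banakh_generalized}, and the desired extension of $\phi$ exists.

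I expect the main obstacle to be the structural step: showing that in pure orbit type $(H)$ the slice action of the large compact group $H$ is necessarily trivial, so that $q\colon X\longra B$ genuinely is a locally trivial fiber bundle with fiber $H\bs G$ whose structure group reduces to $\Aut_G(H\bs G)=WH$, and then checking that the local ``$G$-maps $\leftrightarrow$ sections'' bijections are continuous and glue. The identification $(H\bs G)^H=WH$ used at the end likewise rests on largeness and compactness of $H$. Everything else is a routine assembly of the two quoted lemmas.
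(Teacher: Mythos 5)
Your strategy is the right one and is, at its core, the same as the paper's: convert the $G$-extension problem into an ordinary section-extension problem for a fiber bundle over $B=X/G$ with fiber $\bE W_G(H)$, then finish with Lemma~\ref{lem:extendsection_fiberbundle} and Banakh's Lemma~\ref{lem:Banakh}; your fiber $F=\bE\bigl((H\bs G)^H\bigr)=\bE\bigl(H\bs N_G(H)\bigr)=\bE W_G(H)$ is exactly the paper's. The difference is in how the reduction is packaged. You trivialize $X\longra B$ itself as an $(H\bs G)$-bundle with structure group $W_G(H)$ and form the associated $Z^H$-bundle via $\mathrm{Map}_G(H\bs G,Z)\cong Z^H$; the paper instead forms the single global object $M_G(X,E)=X_H\x_{N_G(H)}E^H=X_H\x_W\bE W$ over $X/G=X_H/W$, using the section/$G$-map correspondence of \cite[2.6]{Khan_Lie}, and gets local triviality from Palais' slice theorem applied to the \emph{free} action of the Lie group $W=W_G(H)$ on the $H$-stratum $X_H$. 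That packaging buys something concrete in the stated generality: the lemma is asserted for $G$ locally compact Hausdorff and $H\in\lrg(G)$, where your two structural inputs --- Palais' slice theorem for the $G$-action on $X$, and cohopficity of $H$ in $G$ (needed both for ``$H$ acts trivially on the slice'' and for $(H\bs G)^H=W_G(H)$) --- are not available: Palais' theorem is a Lie-group theorem, and Footnote~\ref{ft:noantisymmetry} exhibits a large compact $H$ with $gHg\inv\subsetneq H$. By restricting attention to $X_H$ (isotropy exactly $H$, not merely conjugate to $H$) and to the Lie quotient $W$ (Remark~\ref{rem:large}), the paper never has to invoke a slice theorem for $G$ itself. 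So your argument is sound for Lie $G$ --- which is all that is used downstream in Lemma~\ref{lem:PB_cone} --- but as written it does not establish the statement in its full locally-compact-Hausdorff generality.
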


This update of \cite[II:9.3]{Bredon_TG} generalizes half of a recent theorem \cite[4.3]{ZAA}.

\begin{cor}[Zhang--Antonyan--Antonyan]
Let $G$ be a Lie group.
The trivial group $1$ is a large compact subgroup of $G$, hence $\bE G = \bE(1 \bs G) \in G\text{-AE}((G,\{1\})\text{-}\cM)$.
\end{cor}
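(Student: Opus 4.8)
The plan is simply to check that the hypotheses of Lemma~\ref{lem:Gextensor} are met for the choice $H=1$, and then to quote that lemma without modification. First I would verify that the trivial subgroup $1$ is a large compact subgroup of $G$. Since $G$ is a Lie group it is locally compact Hausdorff, so Definition~\ref{defn:large} applies to its subgroups. The subgroup $1$ is a single point, hence compact. To see that it is \emph{large}, recall this means the homogeneous space $G/1$ is a topological manifold; but $G/1 = G$, and a Lie group is by definition a smooth manifold, in particular a topological manifold. Hence $1 \in \lrg(G)$. This is precisely the instance, for Lie $G$, of the parenthetical equality $\lrg = \cpt$ asserted in Definition~\ref{defn:large}.

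With $1 \in \lrg(G)$ in hand, I would apply Lemma~\ref{lem:Gextensor} with $H=1$. As right $G$-spaces one has $1 \bs G = G$: the right $1$-cosets are exactly the singletons $\{g\}$, and the tautological bijection $1 \bs G \longra G$ is a $G$-homeomorphism intertwining the right $G$-actions. Therefore $\bE(1 \bs G) = \bE G$, Milnor's infinite join of $G$, and the lemma yields $\bE G \in G\text{-AE}((G,\{1\})\text{-}\cM)$. Here $(G,\{1\})\text{-}\cM$ is the class of Palais $G$-metrizable spaces all of whose isotropy groups are conjugate into $\{1\}$, i.e.\ are trivial --- the \emph{free} Palais $G$-metrizable spaces --- so the corollary is exactly the free case of the general lemma.

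There is essentially no obstacle: the entire content is the one-line observation that a Lie group, being a manifold, renders its trivial subgroup large, after which Lemma~\ref{lem:Gextensor} applies verbatim. The only point deserving a word is the identification $1 \bs G = G$, which turns the abstract join $\bE(H \bs G)$ into the familiar $\bE G$.
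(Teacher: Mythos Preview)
Your proposal is correct and matches the paper's approach exactly: the corollary is stated immediately after Lemma~\ref{lem:Gextensor} without a separate proof, since it is the special case $H=1$, and your verification that $1 \in \lrg(G)$ (because $G/1 = G$ is a manifold) together with the identification $\bE(1\bs G) = \bE G$ is precisely the intended justification.
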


\begin{rem}\label{rem:large}
Let $G$ be a locally compact group.
Updating his own earlier work, S~Antonyan defines a \emph{closed} subgroup $H$ as \textbf{large in $G$} to be that $G/N$ is a Lie group for some normal subgroup $N \subseteq H$ of $G$ \cite[3.1]{Antonyan2}.
By \cite[3.2]{Antonyan2}, this is equivalent to our Definition~\ref{defn:large}.
When $G$ is separable compact Hausdorff, this equivalence is immediate from \cite[\foreignlanguage{russian}{Теорема}~75]{Pontryagin2} or \cite[Theorem~6.3:1]{MZ_book}, since the kernel of the $G$-action on $G/H$ is the normal subgroup $N = \bigcap_{g \in G} g H g\inv$.

As the large subgroup $H$ is closed in the Hausdorff group $G$, so is its normalizer $N_G(H)$.
The closed subgroup $N_G(H)/N$ of the Lie group $G/N$ is Lie \cite[27]{CartanE}.
Since $H/N$ is closed in $N_G(H)/N$, then $W_G(H) := N_G(H)/H$ is Lie \cite[I:5.3]{tomDieck_TG}.
\end{rem}

Recall the \textbf{$H$-skeleton} (or $H$-fixed set) and \textbf{$H$-stratum} are the $W_G(H)$-spaces
\[
X^H ~:=~ \{x \in X ~|~ H \leqslant G_x \} \quad\text{and}\quad X_H ~:=~ \{x \in X ~|~ H = G_x \}.
\]

Our approach shall \emph{avoid} the related and noteworthy criterion of James--Segal: for any compact Lie group $K$, a member of $K\text{-ANE}(K\text{-}\cP)$ belongs to $K\text{-AE}(K\text{-}\cP)$ if and only if each $H$-skeleton belongs to $\text{AE}(\cP)$ for all closed $H \leqslant K$ \cite[4.1]{JS}.

\begin{proof}[Proof of Lemma~\ref{lem:Gextensor}]
For easier reading, shorten $W := W_G(H)$ and $E := \bE(H\bs G)$.
Observe that $E$ has all orbit types\footnote{\label{ft:noantisymmetry}Like Remark~\ref{rem:BS}, there are large compact subgroups $H$ of locally compact Hausdorff groups $G$ and $g \in G$ with $g H g\inv \subsetneq H$, e.g.~the infinite-dimensional toral group $H = (U_1)^{\aleph_0}$ in $G = H \rtimes \Z$.} $\sqsupseteq (H)$ and that $E^H = \bE (H \bs N_G(H)) = \bE W$.

Recall from \cite[2.6]{Khan_Lie} the space $M_G(X,Y) := \{(x,y) \in X \times Y ~|~ G_x \leqslant G_y \}/G$.
Let $X$ be a $G$-metrizable space with Palais $G$-action of single orbit type $(H)$.
Note
\[
M_G(X,E) ~=~ X_H \x_{N_G(H)} E^H ~=~ X_H \x_W \bE W.
\]
The map $\pi: M_G(X,E) \longra X/G$ becomes $X_H \x_W \bE W \longra X_H/W$, which is an $\bE W$-fiber bundle, as $W$ is Lie (Remark~\ref{rem:large}), by Palais' slice theorem \cite[2.3.1]{Palais}.

Suppose that $A$ is a closed $G$-subset of $X$ and $f: A \longra E$ is a $G$-map.
The $G$-extensions of $f$ to $X$ correspond bijectively \cite[2.6]{Khan_Lie}\footnote{Indeed $E \in \cM \subset T_{3.5}$, since $H \bs G \in \cM$ (\ref{defn:large}) so $C(H \bs G), C(H \bs G)^{\aleph_0} \in \cM$ (\ref{rem:cone_metric}) \cite[4.2.4]{Engelking2}.} to the extensions of the $\pi$-section $\Gamma f: A/G = A_H/W \longra X_H \x_W \bE W$ to $\pi$-sections from $X/G = X_H/W$.
The latter exists by Lemma~\ref{lem:extendsection_fiberbundle}, since $\bE W \in \text{AE}(\cM)$ (\ref{lem:Banakh}) and $\cM \subset \cP$ \cite{Stone1}.
\end{proof}

Extending the above notions, the \textbf{$(H)$-skeleton} and \textbf{$(H)$-stratum} are $G$-spaces
\[
X^{(H)} ~:=~ \{x \in X ~|~ \exists g \in G : H \leqslant G_{xg} \} \enspace\text{and}\enspace X_{(H)} ~:=~ \{x \in X ~|~ \exists g \in G : H = G_{xg} \}.
\]

\begin{lem}\label{lem:closed}
Let $H$ be a compact subgroup of a Lie group $G$.
Let $X$ be a Tikhonov space with Palais $G$-action.
Both $X^{(H)}$ and $X^{(H)}-X_{(H)}$ are closed subsets of $X$.
\end{lem}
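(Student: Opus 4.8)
The plan is to localize both assertions at an arbitrary point and feed them to Palais' slice theorem. Since $X$ is Tikhonov (hence Hausdorff) and the $G$-action is Palais, each isotropy group $G_x$ is closed in $G$ \cite[I:3.5]{tomDieck_TG} and lies in a precompact transporter $\langle U,U\rangle_G$ as in Definition~\ref{defn:Palais_action}, hence is compact; being a closed subgroup of the Lie group $G$, it is a compact Lie group. So Palais' slice theorem \cite{Palais} applies at every point, producing a tube $T \approx S \x_{G_x} G$ around $x$ with $S$ a $G_x$-space. The balanced-product isotropy computation gives $G_{[s,g]} = g\inv (G_x)_s\, g$ for $[s,g] \in S \x_{G_x} G$, where $(G_x)_s \leqslant G_x$ is the $G_x$-isotropy of $s$; hence every orbit type occurring in $T$ is $G$-subconjugate to $G_x$, i.e.\ $(G_x) \geqslant (G_y)$ for all $y \in T$ in the preorder of Example~\ref{exm:stratified}. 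I also record that, once $X^{(H)}$ is known to be closed in $X$, closedness of $X^{(H)} - X_{(H)}$ in $X$ is equivalent (using $X_{(H)} \subseteq X^{(H)}$) to $X_{(H)}$ being open in the \emph{subspace} $X^{(H)}$.

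For $X^{(H)}$ closed I would show its complement is open. Let $x \notin X^{(H)}$, so no $G$-conjugate of $H$ lies in $G_x$, and take a tube $T \approx S \x_{G_x} G$ at $x$. If some $y \in T$ lay in $X^{(H)}$, a $G$-conjugate of $H$ would lie in $G_y$, and composing with a conjugation carrying $G_y$ into $G_x$ (available since $G_y$ is subconjugate to $G_x$) would place a $G$-conjugate of $H$ inside $G_x$ --- a contradiction. Thus $T$ is a neighborhood of $x$ disjoint from $X^{(H)}$. This step uses only what makes the slice theorem run, with no finer Lie input.

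For $X_{(H)}$ open in $X^{(H)}$, let $x \in X_{(H)}$, so $G_x$ is $G$-conjugate to $H$ (in particular compact), and take a tube $T \approx S \x_{G_x} G$ at $x$. For $y \in T$ the tube gives $(H) = (G_x) \geqslant (G_y)$; and if moreover $y \in X^{(H)}$ then $(G_y) \geqslant (H)$ by definition of $X^{(H)}$. Here I invoke Remark~\ref{rem:BS}: since $G$ is a Lie group and $G_y, H$ are compact subgroups, the preorder $\geqslant$ on conjugacy classes of compact subgroups is antisymmetric (a partial order), by Cartan's closed-subgroup theorem. Hence $(G_y) = (H)$, i.e.\ $y \in X_{(H)}$. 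Therefore $T \cap X^{(H)}$ is a neighborhood of $x$ in $X^{(H)}$ contained in $X_{(H)}$, so $X_{(H)}$ is open in $X^{(H)}$; with the previous paragraph, $X^{(H)} - X_{(H)}$ is closed in $X^{(H)}$, hence in $X$.

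The main obstacle --- and the only place the Lie hypothesis does real work --- is the antisymmetry input behind Remark~\ref{rem:BS}: the rigidity that a $G$-conjugate $gHg\inv$ of a compact subgroup $H$ of a Lie group, if it is contained in $H$, must equal $H$ (seen by counting connected components, using that conjugation by $g$ is an isomorphism $H \to gHg\inv$ matching their identity components). For non-Lie $G$ this can fail with $gHg\inv \subsetneq H$ (cf.\ footnote~\ref{ft:noantisymmetry}), whereupon $X^{(H)} - X_{(H)}$ need not be closed. Everything else is bookkeeping with the slice theorem and the balanced-product isotropy formula; the one subtlety is that the slice theorem as invoked wants $X$ Tikhonov, which is precisely the standing hypothesis.
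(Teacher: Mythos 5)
Your proposal is correct and follows essentially the same route as the paper: localize at a point via Palais' slice theorem, observe that all isotropy in the resulting tube is subconjugate to $G_x$, and then use antisymmetry of the subconjugacy preorder on compact subgroups of a Lie group to get $X_{(H)}$ open in $X^{(H)}$. The only cosmetic difference is that you phrase the slice theorem's output as a tube $S \x_{G_x} G$ with the balanced-product isotropy formula, whereas the paper phrases it as a $G$-retraction onto the orbit; these yield the same subconjugacy inequality.
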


So $X_{(H)}$ is locally closed in $X$ with closure $\subseteq X^{(H)}$; see \cite[p68]{Bredon_TG} \cite[I:6.2]{tomDieck_TG}.
Therefore, $X$ satisfies the Frontier Condition over the poset $(\cpt,\geqslant)$ \cite[I:1.1]{GM_book}.

\begin{proof}
We use the notation of Example~\ref{exm:stratified}.
Let $x \in X - X^{(H)}$.
Then $(H) \nleqslant (G_x)$.
By Palais' slice theorem \cite[2.3.1]{Palais}, there exist a $G$-neighborhood $U$ of $x G$ in $X$ and a $G$-retraction $U \longra x G \approx G_x \bs G$.
If $y \in U$ then $(G_y) \leqslant (G_x)$ so $(H) \nleqslant (G_y)$.
Thus $U \subseteq X - X^{(H)}$.
Therefore $X^{(H)}$ is closed in $X$.

Let $a \in X_{(H)}$.
Then $(H) = (G_a)$.
By Palais' slice theorem, there exist a $G$-neighborhood $O$ of $a G$ in $X$ and $G$-retraction $O \longra a G \approx G/H$.
Since $G$ is Lie, its closed subgroups are cohopfian, so the preorder $\leqslant$ is antisymmetric; see Footnote~\ref{ft:noantisymmetry} and \cite[I:3.7]{tomDieck_TG}.
If $b \in X^{(H)} \cap O$ then $(H) \leqslant (G_b) \leqslant (H)$ so $(H)=(G_b)$.
Thus $X^{(H)} \cap O \subseteq X_{(H)}$.
So $X_{(H)}$ is open in $X^{(H)}$.
Hence $X^{(H)} - X_{(H)}$ is closed in $X$.
\end{proof}

Finally, we update Palais--Bredon's key cone lemma \cite[\S2.7]{Palais_book} \cite[II:9.4]{Bredon_TG}.
They only had considered compact $G$, so they equivalently used the fine cone (\ref{rem:cone_metric}).

\begin{lem}\label{lem:PB_cone}
Fix a compact subgroup $H$ of an arbitrary Lie group $G$.
Let $A$ be a closed $G$-subset of a Palais $G$-metrizable space $X$.
Any $G$-map $\vphi : A \longra C\bE(H\bs G)$ with $0 \notin \vphi(A_{(H)})$ admits a $G$-extension $\Phi: X \longra C\bE(H\bs G)$ such that $0 \notin \Phi(X_{(H)})$.
\end{lem}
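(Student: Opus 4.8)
The plan is to reduce the extension problem to a section-extension problem for a fiber bundle, exactly as in Lemma~\ref{lem:Gextensor}, but now over the coarse cone of $\bE(H\bs G)$ rather than $\bE(H\bs G)$ itself. Write $W := W_G(H)$ and $E := \bE(H\bs G)$, so $C E = C\bE(H\bs G)$ carries the $G$-action $[e,t]\cdot g = [eg,t]$ from Remark~\ref{rem:cone_metric}. First I would decompose the target: a $G$-map $\vphi: A \longra CE$ is, by Remark~\ref{rem:cone_metric}, the same as a pair $(\vphi_0,\vphi_1)$ with $\vphi_1: A \longra [0,1]$ a $G$-invariant map and $\vphi_0: \vphi_1\inv(0,1] \longra E$ a $G$-map on the open $G$-subset where the cone coordinate is nonzero. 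The hypothesis $0 \notin \vphi(A_{(H)})$ says precisely that $A_{(H)} \subseteq \vphi_1\inv(0,1]$.

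Next I would handle the two coordinates. The scalar coordinate is easy: $\vphi_1: A \longra [0,1]$ is a $G$-invariant map, so it factors through $A/G$, a metrizable space (since $X \in G\text{-}\cM$, $X/G$ is metrizable and $A/G$ is a closed subset), and Tietze's theorem gives a $G$-invariant extension $\Phi_1: X \longra [0,1]$. I must arrange $\Phi_1$ so that $\Phi_1\inv(0,1] \supseteq X_{(H)}$; this is where I would use Lemma~\ref{lem:closed}, which says $X_{(H)}$ is locally closed with closure in $X^{(H)}$, and that $X^{(H)} - X_{(H)}$ is closed in $X$. Concretely, I would extend $\vphi_1$ over $A \cup (X^{(H)} - X_{(H)})$ by declaring it $0$ on the latter (which is consistent since $A_{(H)}$ is where $\vphi_1 > 0$, and $A \cap (X^{(H)}-X_{(H)})$ maps to $0$ already by the hypothesis together with $A^{(H)} = A_{(H)} \sqcup (A \cap (X^{(H)}-X_{(H)}))$ after passing to conjugates), then apply Tietze to this closed set; any extension $\Phi_1$ then has $\Phi_1\inv(0,1] \cap X^{(H)} \subseteq X_{(H)}$, in particular $\Phi_1$ vanishes on $X^{(H)} - X_{(H)}$, and we separately ensure $\Phi_1 > 0$ on all of $X_{(H)}$ by first extending to be, say, identically $1$ there — possible because $X_{(H)}$ is open in $X^{(H)}$ and $A_{(H)}$ closed in it, so one can glue.

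For the slice coordinate, set $Y := \Phi_1\inv(0,1]$, an open $G$-subset of $X$, and $A' := A \cap Y$, which is closed in $Y$; then $\vphi_0$ is defined on $A'$. The goal is a $G$-extension $\Phi_0: Y \longra E$. Restricting attention to the single orbit type $(H)$, as in Proof~\ref{lem:Gextensor}, the relevant data over $Y_{(H)}$ reduces via $M_G(Y,E) = Y_H \x_W \bE W \longra Y_H/W$ to an $\bE W$-fiber bundle, and Lemma~\ref{lem:extendsection_fiberbundle} together with Lemma~\ref{lem:Banakh} ($\bE W \in \text{AE}(\cM)$, using $W$ Lie by Remark~\ref{rem:large} and $\cM \subseteq \cP$) extends the corresponding section. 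But $Y$ has orbit types other than $(H)$, so this is not the whole story; here I would invoke the equivariant absolute neighborhood extensor property $\bE(H\bs G) \in G\text{-ANE}(G\text{-}\cM)$ supplied by Theorem~\ref{thm:Banakh_generalized} (applied to $Z = \bE(H\bs G)$, which is itself of the form $\bE W$ over its top stratum and is a $G\text{-ANR}$ after the slice-bundle analysis). More carefully: since $E = \bE(H\bs G)$ is a Palais $G$-metrizable $G$-ANR, it lies in $G\text{-ANE}(G\text{-}\cM)$, so $\vphi_0$ extends to a $G$-map on some $G$-neighborhood $N$ of $A'$ in $Y$; the obstruction to pushing this out to all of $Y$ lives on the complement, and I close the gap by a Urysohn-type argument in the cone direction — shrinking $\Phi_1$ to $0$ outside $N$ so that the cone point absorbs the region where $\Phi_0$ is undefined, which is legitimate precisely because $CE$ has a cone point and $X_{(H)} \subseteq$ the locus where we have kept $\Phi_1 > 0$ and (after the previous paragraph's care) $A_{(H)} \cup X_{(H)}$-nearby points lie in $N$.

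The main obstacle I anticipate is this last gluing: matching the scalar coordinate $\Phi_1$ and the slice coordinate $\Phi_0$ consistently so that (i) $\Phi = [\Phi_0,\Phi_1]$ is continuous — which, by Remark~\ref{rem:cone_metric}, requires $\Phi_0$ to be defined and continuous on all of $\Phi_1\inv(0,1]$, not merely near $A$ — and (ii) $0 \notin \Phi(X_{(H)})$, i.e.\ $X_{(H)} \subseteq \Phi_1\inv(0,1]$. The tension is that extending $\Phi_0$ further requires shrinking $\Phi_1$, while keeping $X_{(H)}$ inside $\Phi_1 > 0$ forbids shrinking there. The resolution is that the stratum $X_{(H)}$ is where $\bE(H\bs G)$ is an honest $G$-AE (Lemma~\ref{lem:Gextensor} with the fiber-bundle/section extension), so $\Phi_0$ can always be defined on a full $G$-neighborhood of $A \cup X_{(H)}$ outright; only on the lower strata $X^{(K)}$ with $(K) \not\geqslant (H)$, where $\Phi_1$ is allowed (and by the Lemma~\ref{lem:closed} construction, forced) to approach $0$, do we exploit the cone point. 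So the proof is: (1) build $\Phi_1$ by Tietze using Lemma~\ref{lem:closed}; (2) on $Y_1 := \Phi_1\inv(0,1]$, use Lemma~\ref{lem:Gextensor} (single orbit type $(H)$ reduction to an $\bE W$-bundle, then Lemma~\ref{lem:extendsection_fiberbundle}) to extend $\vphi_0$ over a $G$-neighborhood of $A' \cup (Y_1)_{(H)}$, then use $E \in G\text{-ANE}(G\text{-}\cM)$ (Theorem~\ref{thm:Banakh_generalized}) and a cone-collapse to push $\Phi_0$ out to all of $Y_1$ at the cost of decreasing $\Phi_1$ only off $X^{(H)}$; (3) set $\Phi = [\Phi_0,\Phi_1]$ and check continuity via Remark~\ref{rem:cone_metric} and the condition $0\notin\Phi(X_{(H)})$ from step (1).
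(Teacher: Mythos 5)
Your proposal is correct and follows essentially the same route as the paper: split the map into its cone coordinate and its $\bE(H\bs G)$ coordinate, use Lemma~\ref{lem:closed} and a Urysohn function to force the cone coordinate to vanish on $X^{(H)}-X_{(H)}$ while staying positive on $X_{(H)}$, extend the slice coordinate over the full stratum $X_{(H)}$ via Lemma~\ref{lem:Gextensor}, and then push out to a $G$-neighborhood by the $G\text{-ANE}$ property of Theorem~\ref{thm:Banakh_generalized} (applied to $H\bs G$, not to $\bE(H\bs G)$ itself, a minor slip) before collapsing to the cone point off that neighborhood. The only differences from the paper's proof are organizational (you fix the scalar coordinate first and then shrink it, whereas the paper interleaves the two coordinates through the maps $\vphi'$, $\vphi''$, $\Phi'$), and you correctly identify and resolve the one genuine tension, namely that the neighborhood extension of the slice coordinate must contain all of $X_{(H)}$ so that shrinking the cone coordinate does not destroy the condition $0\notin\Phi(X_{(H)})$.
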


\begin{proof}
Since $\vphi$ is equivariant, note $\vphi(A^{(H)}-A_{(H)}) = \{0\}$, the coarse conepoint.
Write $Z := \vphi\inv(0)$ and $E := \bE(H\bs G)$.
Since $A_{(H)} \cap Z = \emptyset$, there are coordinates
\[
\vphi|A_{(H)} = (\vphi_0, \vphi_1): A_{(H)} \longra E \x (0,1].
\]
Since $H \in \lrg(G)$ and since $A_{(H)}$ is closed in $X_{(H)} \in (G,\{H\})\text{-}\cM$, by Lemma~\ref{lem:Gextensor}, $\vphi_0: A_{(H)} \longra E$ extends to a $G$-map $\vphi_0': X_{(H)} \longra E$.
Also, since $A_{(H)}$ is closed in $X_{(H)} \in \cM$, by Tietze's extension theorem \cite[Satz~3]{Tietze}, $\vphi_1: A_{(H)} \longra (0,1]$ extends to a $G$-map $\vphi_1': X_{(H)} \longra (0,1]$.
Since $A_{(H)}$ and $B := X^{(H)}-X_{(H)}$ are disjoint sets (with $B$ closed in $X$ by Lemma~\ref{lem:closed}), similar to \eqref{eqn:eta}, construct a map
\[
\eta: X^{(H)} \longra [0,1] ~;~ x \longmapsto \frac{d(x,B)}{d(x,A_{(H)}) + d(x,B)}
\]
satisfying $\eta(B)=\{0\}$ and $\eta(A_{(H)})=\{1\}$.
Then $\vphi_1$ extends to $\vphi_1' \eta: X^{(H)} \longra [0,1]$ which has $B$ being the preimage of $0$.
So $\vphi|A_{(H)}$ extends to a $G$-map $\vphi' := (\vphi_0', \vphi_1' \eta) : X^{(H)} \longra CE$ with $\vphi'(B)=\{0\}$.
As $X^{(H)}$ is closed in $X$ by Lemma~\ref{lem:closed}, by pasting lemma \cite[18.3]{Munkres}, $\vphi$ and $\vphi'$ unite to a $G$-map $\vphi'': A \cup X^{(H)} \longra CE$.

Again as above, the restriction of this new function has coordinates
\[
\vphi''| A' = (\vphi''_0, \vphi''_1) : A' := (A-Z) \cup X_{(H)} \longra E \x (0,1].
\]
Since $H$ is a compact subgroup of the Lie group $G$, the orbit $H\bs G \in G\text{-ANR}(G\text{-}\cM)$ by Palais' slice theorem \cite[2.3.1]{Palais}.
Then $E \in G\text{-ANE}(G\text{-}\cM)$ by Theorem~\ref{thm:Banakh_generalized}.
So $\vphi''_0: A' \longra E$ extends to a $G$-map $\Phi'_0: U \longra E$ on a $G$-neighborhood $U$ of the \emph{closed} $G$-subset $A'$ in the $G$-metrizable space $X' := X-(Z\cup B)$.
Indeed, by Lemma~\ref{lem:closed}, the frontier $\bdry_X(X_{(H)}) := \ol{X_{(H)}} - X_{(H)} \subset B$ so $\bdry_{X-B}(X_{(H)}) = \emptyset$.
Define a $G$-map
\[
\eta': X' \longra [0,1] ~;~ x \longmapsto \frac{d(x,X'-U)}{d(x,A') + d(x,X'-U)}
\]
with $\eta'(X'-U)=\{0\}$ and $\eta'(A')=\{1\}$.
Then $\vphi''_1: A' \longra (0,1]$ extends to a map $\vphi''_1 \eta': X' \longra [0,1]$.
So $\vphi''|A'$ extends to a $G$-map $\Phi' := (\Phi_0', \vphi''_1 \eta') : X' \longra CE$ with $(\Phi')\inv\{0\} = X'-U$.
Extend $\Phi'$ by zero to a $G$-map $\Phi: X' \cup Z \cup B \longra CE$.
\end{proof}

Recall that any $G$-map $f: X \longra Y$ satisfies $G_x \leqslant G_{fx}$ for all $x \in X$.
Furthermore, if $G_x = G_{fx}$ for all $x \in X$, the $G$-equivariant map $f$ is called \textbf{$G$-isovariant}.

\begin{thm}\label{thm:extensor}
Let $G$ be a Lie group.
Let $\cF \subseteq \cpt(G)$ have no conjugate elements.
Then $\bE_\cF G = \bigbcoast_{H \in \cF} \bE(H\bs G)$ is an \textbf{isovariant absolute $G$-extensor} for $(G,\cF)\text{-}\cM$:  for any closed $G$-subset $A$ of any member $X$ of the class $(G,\cF)\text{-}\cM$, any isovariant $G$-map $\vphi: A \longra \bE_\cF G$ extends to an isovariant $G$-map $\Phi: X \longra \bE_\cF G$.
\end{thm}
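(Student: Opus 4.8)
The strategy is to reduce the isovariant extension problem over $\bE_\cF G$ to a finite sequence of one-stratum-at-a-time extensions, each of which is handled by the coarse-cone Lemma~\ref{lem:PB_cone}. First I would fix the isovariant $G$-map $\vphi: A \longra \bE_\cF G$ and, for each $H \in \cF$, write $\vphi_H: A \longra C\bE(H\bs G)$ for its $H$-th coordinate; since $\bE_\cF G \subseteq \prod_{H \in \cF} C\bE(H\bs G)$, a $G$-map $\Phi$ into the product with $\Phi|A = \vphi$ exists iff each coordinate $\Phi_H$ extends $\vphi_H$, and the extension lands in $\bE_\cF G$ precisely when for each $x$ there is some $H$ with the $H$-coordinate at the cone tip nonzero \emph{and} $G_{\Phi(x)} = G_{x_H}$. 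The key observation is that isovariance of the \emph{target} value is controlled strata-by-strata: because $\cF$ has no conjugate elements, for $x \in X_{(H)}$ the only way to realize $G_{\Phi(x)} = G_x$ (a conjugate of $H$) through the join is to have the $H$-coordinate be the dominant one, i.e. $\Phi_H(x) = (\Phi_{H,0}(x), \Phi_{H,1}(x))$ with $\Phi_{H,1}(x) > 0$ and $\Phi_{H,0}(x)$ lying in the $G_x$-fixed set of $\bE(H\bs G)$, which by isovariance of $\vphi$ on $A$ is exactly the condition $0 \notin \Phi_H(X_{(H)})$ that Lemma~\ref{lem:PB_cone} delivers.

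So the core of the argument is: for each $H \in \cF$ independently, apply Lemma~\ref{lem:PB_cone} to the closed $G$-subset $A$ of $X \in (G,\cF)\text{-}\cM$ and the $G$-map $\vphi_H: A \longra C\bE(H\bs G)$ — noting $0 \notin \vphi_H(A_{(H)})$ because $\vphi$ is isovariant, so on the stratum $A_{(H)}$ the value $\vphi(x)$ has isotropy a conjugate of $H$, forcing the $H$-coordinate away from the tip — to obtain a $G$-extension $\Phi_H: X \longra C\bE(H\bs G)$ with $0 \notin \Phi_H(X_{(H)})$. Assembling $\Phi := (\Phi_H)_{H \in \cF}$ gives a $G$-map $X \longra \prod_{H} C\bE(H\bs G)$ extending $\vphi$. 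Then I must check (i) $\Phi(X) \subseteq \prod_H C\bE(H\bs G) - \{0\}^{\cF}$, which holds since each $x \in X$ lies in some $X_{(H)}$ (all orbit types are in $\cF$, as $X \in (G,\cF)\text{-}\cM$) and there $\Phi_H(x) \neq 0$; and (ii) the isovariance $G_{\Phi(x)} = G_x$ for all $x$ — here for $x \in X_{(H)}$ one has $G_x \leqslant G_{\Phi(x)}$ always, and the reverse inclusion follows because $\Phi_H(x)$ is a point of $C\bE(H\bs G)$ off the tip whose $\bE(H\bs G)$-coordinate, being the image under a $G$-map of a point with isotropy $G_x$, has isotropy exactly $G_x$ in $\bE(H\bs G)$ (a coarse join of copies of $H \bs G$ is isovariant on the relevant locus by the same bookkeeping as Proposition~\ref{prop:cover_classifying}), and no coordinate can \emph{raise} the isotropy past $G_x$ since any larger subgroup would be a strictly larger conjugate of some member of $\cF$, impossible for $\cF$ without conjugate elements together with Cartan's theorem making conjugates of compact subgroups of a Lie group non-properly-containing.

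The main obstacle I anticipate is verifying claim (ii) cleanly, specifically that the \emph{non-dominant} coordinates $\Phi_K(x)$ for $K \neq H$ do not spuriously enlarge $G_{\Phi(x)}$: one needs that whenever $\Phi_K(x) \neq 0$, its contribution to the isotropy is a subgroup of some conjugate of $K$, and combined across all coordinates the total isotropy $G_{\Phi(x)}$ is the intersection of these, which must then equal $G_x$ since $G_x$ is already among them and is maximal in the relevant sense. This requires carefully re-deriving the isotropy-of-a-join computation (as in Definitions~\ref{defn:Milnor_join}--\ref{defn:Khan_join}) and invoking that closed subgroups of a Lie group are cohopfian (Remark~\ref{rem:BS}, and the proof of Lemma~\ref{lem:closed}) so that the preorder on $(\cF)$ is antisymmetric on conjugates — exactly the point where the "no conjugate elements" hypothesis on $\cF$ and the Lie hypothesis on $G$ are both essential. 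The remaining bookkeeping — continuity of $\Phi$ as a map into a product (automatic from continuity of each $\Phi_H$), and that $\Phi$ lands in the subspace $\bE_\cF G$ with its subspace topology — is routine.
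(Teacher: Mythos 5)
Your proposal follows essentially the same route as the paper's proof: split $\vphi$ into coordinates $\vphi_H: A \longra C\bE(H\bs G)$, use isovariance plus the no-conjugate-elements hypothesis to conclude $0 \notin \vphi_H(A_{(H)})$, extend each coordinate by Lemma~\ref{lem:PB_cone}, and reassemble, verifying isovariance of $\Phi$ via cohopficity of compact subgroups of a Lie group. The only item the paper treats that you omit is the degenerate coordinate $H=G$ (possible when $G$ is compact and $G \in \cF$), which it handles separately by Tietze's theorem rather than Lemma~\ref{lem:PB_cone}; conversely, your check that the assembled $\Phi$ is isovariant and lands in the unrestricted isovariant join is spelled out in more detail than the paper's one-line assertion.
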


Differently from Palais--Bredon's construction of an isovariant $\cF$-classifying space, S~Ageev asserted that $(C(\cF \bs G))^{\aleph_0}$ is an isovariant absolute $G$-extensor for $(G,\cF)\text{-}\cM$, if $G$ is a compact Hausdorff group and $\cF \subseteq \lrg(G)$ need not be finite \cite[3.2]{Ageev4}.

\begin{proof}
Write $E_H := \bE(H\bs G)$ and denote coordinates $\vphi = (\vphi_H: A \longra C E_H)_{H \in \cF}$.
Let $H \in \cF-\{G\}$.
Since $\vphi$ is isovariant, $\bigbcoast$ is our unrestricted isovariant join (\ref{defn:Khan_join}), and $\cF$ has no conjugate elements, note for all $a \in A_{(H)}$ that
\[
(G) \neq (H) = (G_a) = (G_{\vphi a}) = (G_{\vphi_H a}).
\]
So $0 \notin \vphi_H(A_{(H)})$.
By Lemma~\ref{lem:PB_cone}, $\vphi_H$ extends to a $G$-map $\Phi_H : X \longra C E_H$ with $0 \notin \Phi_H(X_{(H)})$.
If $G \in \cF$ then $\vphi_G: A \longra CE_G = [0,1]$, where $0 \notin \vphi_G(A_G)$ since $\{0\}^I \notin \vphi(A_G)$, extends to a $G$-map $\Phi_G: X \longra [0,1]$ with $0 \notin \Phi_G(X_G)$, by \cite[Satz~3]{Tietze} via orbit spaces to $(0,1] \in \text{AE}(\cM)$ then $[0,1]$; this is if $G$ is compact and $X$ has a $G$-fixed point.
So the $G$-map $\Phi := (\Phi_H)_{H \in \cF}: X \longra \bE_\cF G$ is isovariant.
\end{proof}

Consequently, we obtain the desired corollary which is the first half of uniqueness.

\begin{proof}[Proof of Theorem~\ref{thm:homotopy}]
Assume a $G$-homeomorphism $\psi: f^*(\bE_\cF G) \longra g^*(\bE_\cF G)$ satisfying $\psi/G = \id_B$.
Note $X := f^*(\bE_\cF G) \x [0,1]$ has orbit space $X/G = B \times [0,1]$.
On the closed $G$-subset $A := f^*(\bE_\cF G) \x \{0,1\}$ of $X$, define the isovariant $G$-map
\[
\vphi: A \longra \bE_\cF G ~;~ (b,e,s) \longmapsto
\begin{cases}
e & \text{if } s=0\\
\psi_1(b,e) & \text{if } s=1
\end{cases}
\]
with the pullback $f^*(\bE_\cF G) := \{ (b,e) \in B \x \bE_\cF G ~|~ f(b)=eG \}$ and $\psi = (\id_B, \psi_1)$.
Note $(\vphi/G)(b,0) = f(b)$ and $(\vphi/G)(b,1) = \psi_1(b,e)G = g(b)$.
Therefore, we conclude the existence of a stratified homotopy $\Phi/G: B \x [0,1] \longra \bB_\cF G$ from $f$ to $g$ by Theorem~\ref{thm:extensor}, once we verify that $X$ is $G$-metrizable, as $X$ has isotropy in $(\cF)$.

Since $H\bs G \in G\text{-}\cM$ for any $H \in \cF$, by Theorem~\ref{thm:Banakh_generalized}, the Milnor join $\bE(H \bs G) \in G\text{-}\cM$.
The induced metric (\ref{rem:cone_metric}) on its coarse cone is $G$-metrizable.
The Lie group $G$ has only countably many conjugacy classes of compact subgroups, by \cite[Corollary~3.9]{Khan_Lie}.
Then the countable product $\prod_{H \in \cF} C\bE(H\bs G)$ has an induced metric \cite[20.5]{Munkres}, whose formula is $G$-invariant.
So $\bE_\cF G \in G\text{-}\cM$.
Therefore, since $B \in \cM$, the subproduct $f^*(\bE_\cF G) \subset B \x \bE_\cF G \in G\text{-}\cM$, hence $X$ is also a member.
\end{proof}

\section{The classifying property: uniqueness, II}

The following Covering Homotopy Theorem is a nontrivial result on product structures for Hausdorff $B$.
In the free case, it is \cite[11.3]{Steenrod_book} if $B$ is normal Lindel\"of and locally compact, and more generally \cite[4:9.8]{Husemoller} if $B$ is paracompact.\footnote{Paracompact Hausdorff $B$ admit a product-structure theorem for microbundles \cite[3.1]{Milnor_microbundles}.}
If $G$ is a compact Lie group, the result generalizes \cite[2.4.1]{Palais_book} if $B$ is second-countable locally compact, and more generally \cite[II:7.1]{Bredon_TG} if $B$ is hereditarily paracompact.

\begin{thm}\label{thm:product}
Let $X$ be a Tikhonov space with Palais action of a Lie group $G$.
Suppose the orbit map is $p: X \longra B \x [0,1]$ for some hereditarily paracompact Hausdorff space $B$.
Assume $(G_x) = (G_y)$ if $\proj_B(p x) = \proj_B(p y)$.
Then $X$ is $G$-homeomorphic over the identity $\id_{B \x [0,1]}$ to the product space $p\inv(B \x \{0\}) \x [0,1]$.
\end{thm}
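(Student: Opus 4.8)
The plan is to reduce the Covering Homotopy Theorem to an application of the isovariant extensor property established in Theorem~\ref{thm:extensor}, mimicking the classical Bredon--Palais strategy (\cite[II:7.1]{Bredon_TG}) but using our coarse-join model so as to dispense with finiteness and compactness hypotheses. First I would set $\cF$ to be a set of compact subgroups of $G$ with no conjugate representatives containing all the isotropy groups occurring in $X$ --- finitely or countably many conjugacy classes suffice by \cite[Corollary~3.9]{Khan_Lie} --- so that $X \in (G,\cF)\text{-}\cM$; here one checks $X$ is $G$-metrizable exactly as in Proof~\ref{thm:homotopy}, using that $B \x [0,1]$ is metrizable (a hereditarily paracompact space need not be, so strictly one should either add metrizability of $B$ or argue that the relevant slices and their cones, being separable-metrizable fibered over the paracompact $B\x[0,1]$, assemble to a $G$-metrizable total space via a $G$-invariant metric pulled back through the orbit map; I expect the cleanest route is to observe $X$ embeds $G$-equivariantly in $B \x \bE_\cF G \x \mathrm{something}$ with the product metric). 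By Theorem~\ref{thm:classify} (or directly by Palais' slice theorem, since the action is Palais-proper) pick an isovariant classifying map $F: X \longra \bE_\cF G$ inducing $f: B \x [0,1] \longra \bB_\cF G$ with $X \iso f^*(\bE_\cF G)$.

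The heart of the argument is to produce a $G$-homeomorphism over $\id_{B\x[0,1]}$ between $X$ and $X_0 \x [0,1]$, where $X_0 := p\inv(B \x \{0\})$. I would build this by a retraction-style extension: consider $X_0 \x [0,1]$, whose classifying map is $f_0 \circ \pr : B \x [0,1] \longra B \x \{0\} \longra \bB_\cF G$ where $f_0 := f|_{B \times \{0\}}$. On the closed $G$-subset $X_0 \x \{0\} \subseteq X_0 \x [0,1]$ the two isovariant maps $F$ (restricted) and the ``product'' classifying map agree, and I want to interpolate. More precisely, the standard move is: form the $G$-space $Y := X \x_{B\x[0,1]} (X_0 \x [0,1])$ (fiber product over the orbit space), which carries an isovariant $G$-map to $\bE_\cF G \x \bE_\cF G$, hence by coning to $\bE_\cF G$ again (the coarse join absorbs a second factor: $\bE_\cF G * \bE_\cF G \iso \bE_\cF G$ up to $G$-homeomorphism, which is where the \emph{unrestricted} coarse join of Definition~\ref{defn:Khan_join} is essential, just as $\kappa+\kappa=\kappa$ was used in Proof~\ref{thm:final}). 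Then one has an isovariant $G$-map defined on the closed subset $(X_0 \x \{0\}) \subseteq X$ --- namely the canonical section-like comparison --- and Theorem~\ref{thm:extensor} extends it over all of $X$; evaluating this extension and tracking coordinates yields a fiberwise map $X \longra X_0\x[0,1]$ over $\id_{B\times[0,1]}$. A symmetric construction gives a map in the reverse direction, and one checks the composites are $G$-homotopic over $\id$ to the identities, then upgrades homotopy-over-$B$ to homeomorphism-over-$B$ by the usual ``shift and telescope'' trick on the interval coordinate (Dold's standard mapping-cylinder/retract argument), which only uses that the interval coordinate can be reparametrized.

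The main obstacle will be the third step: getting an actual $G$-homeomorphism over $\id_{B \x [0,1]}$ rather than merely a $G$-homotopy equivalence over the base. In the compact-$G$, finite-$\cF$, finite-dimensional-$B$ setting Bredon pushes this through with an explicit compression along $[0,1]$ combined with his cone lemma; our Lemma~\ref{lem:PB_cone} is exactly the coarse-cone replacement, so the plan is to iterate it: using that $[0,1]$ is compressible, extend the identity on $X_0 \x \{0\}$ to a $G$-self-map of $X$ that ``slides everything to level $0$'', controlling isotropy strata one conjugacy class at a time via Lemma~\ref{lem:PB_cone} (applied inside each $X^{(H)}$, with Lemma~\ref{lem:closed} guaranteeing the strata are arranged in a frontier-respecting way so the extensions glue by the pasting lemma), and simultaneously building its inverse. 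I expect the delicate point is ensuring the sliding map is injective and open --- i.e., genuinely a homeomorphism, not just continuous --- which in Bredon's proof is handled by making the construction fiberwise over $B$ and invoking invariance-of-domain-type or explicit-formula arguments on each slice; here I would instead make the construction manifestly reversible by building the forward and backward maps together from the \emph{same} extension data (a $G$-map $X \x [0,1] \longra X$ that is level-preserving over $B$, strictly increasing in the $[0,1]$-coordinate on each orbit, and equals projection to level $0$ at time $1$), so that its time-$1$ map is the desired $G$-homeomorphism $X \xrightarrow{\ \iso\ } X_0 \x [0,1]$ with inverse read off from the flow. Verifying the metrizability hypotheses needed to invoke Theorem~\ref{thm:extensor} at each stage --- in particular that the intermediate $G$-spaces $X^{(H)}$, $X - X^{(H)}$, and the fiber products stay in $(G,\cF)\text{-}\cM$ --- is routine but must be checked, using Lemma~\ref{lem:closed} and the fact that closed $G$-subspaces and countable products of $G$-metrizable Palais spaces are again such.
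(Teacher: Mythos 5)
Your proposal takes a fundamentally different route from the paper's, and the route has a genuine gap at its core: you try to derive the product structure from the classifying-space and extensor machinery (Theorem~\ref{thm:extensor}, Lemma~\ref{lem:PB_cone}), but that machinery is homotopy-theoretic and can only ever deliver a fiberwise $G$-\emph{homotopy equivalence} over $\id_{B\x[0,1]}$, never the required $G$-\emph{homeomorphism}. You acknowledge this obstacle yourself, but the proposed fixes do not close it: Dold's ``shift and telescope'' argument upgrades fiber homotopy equivalences to fiber homotopy equivalences with better properties, not to homeomorphisms, and the ``level-preserving flow that slides everything to level $0$'' is exactly the object whose existence is equivalent to the theorem --- constructing it requires the local product structure you are trying to prove, so the argument is circular. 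A second, structural problem is that the hypotheses do not support your tools: $B$ is only assumed hereditarily paracompact Hausdorff, not metrizable, so $X$ need not lie in $(G,\cF)\text{-}\cM$ and Theorem~\ref{thm:extensor} simply does not apply. (Your own hedging about whether $X$ is $G$-metrizable is a symptom of this mismatch.) There is also a logical-order issue: in the paper, Theorem~\ref{thm:product} is an independent ingredient used alongside Theorem~\ref{thm:extensor} to prove Corollary~\ref{cor:unique2}; deriving it \emph{from} the extensor theory would invert the architecture and, even if repaired, would only recover a concordance-type statement rather than a homeomorphism over the identity.

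The paper's actual proof is a direct local-to-global patching argument in the Steenrod--Palais--Bredon tradition. Locally (Lemma~\ref{lem:StepA}), Palais' slice theorem produces a $G_x$-slice $S$ with $G_x$ compact Lie, and Bredon's covering homotopy theorem for compact groups \cite[II:7.1]{Bredon_TG} applied to $S$ over $U \x [a,b]$ gives a product structure on the tube $S\x_{G_x}G$; this is the key reduction of the noncompact case to the compact case, which your proposal never makes. The interval direction is then handled by compactness of $[0,1]$ and the Lebesgue number lemma (Lemma~\ref{lem:StepB}), composing finitely many local trivializations. Globally, Proposition~\ref{prop:countable} replaces a locally finite cover of $B$ by a countable one, and the countably many product structures are glued recursively using an explicit $G$-isotopy $\delta$ built from a Urysohn function that is constant near the overlap boundaries --- this explicit formula, with its explicit inverse, is what guarantees a genuine homeomorphism at every stage. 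If you want to salvage your approach, you would need to prove the homeomorphism statement by some such explicit local construction anyway, at which point the extensor detour contributes nothing.
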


Our ensuing proof applies and extends Palais--Bredon's argument \cite[II:7.1]{Bredon_TG}.

\begin{lem}\label{lem:StepA}
Let $(y,t) \in B \x [0,1]$.
Then $p\inv(U \x [a,b])$ is $G$-homeomorphic over $\id$ to $p\inv(U \x a) \x [a,b]$ for some neighborhoods $U$ of $y$ in $B$ and $[a,b]$ of $t$ in $[0,1]$.
Furthermore, the $G$-homeomorphism restricts to $\id: p\inv(U \x a) \longra p\inv(U \x a) \x a$.
\end{lem}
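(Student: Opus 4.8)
The plan is to derive the local product structure from a Palais slice together with the classical compact--group case. First, choose any $x_0 \in p\inv(y,t)$ and put $H := G_{x_0}$; as the $G$-action is Palais-proper and $G$ is Lie, $H$ is compact. By Palais' slice theorem \cite{Palais}, there is a slice $S$ at $x_0$, so the induced $G$-map $S \x_H G \longra X$ is an open embedding onto a $G$-neighborhood $T$ of the orbit $x_0 G$, and $T/G$ is canonically identified with $S/H$.

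Second, I would shrink the tube to a box. Since $T/G = S/H$ is an open neighborhood of $(y,t)$ in $B \x [0,1]$, and $B \x [0,1]$ has a neighborhood basis of sets $U \x [a,b]$ with $U$ open in $B$ and $t \in [a,b]$, I replace $S$ by the $H$-subspace of those points lying over such a box $U \x [a,b] \ni (y,t)$. Then $T$ becomes the $G$-saturated set $p\inv(U \x [a,b])$, still identified with $S \x_H G$, and $U$ is hereditarily paracompact Hausdorff as a subspace of $B$. Let $S_a \subseteq S$ be the $H$-subspace lying over $U \x \{a\}$, so that $S_a \x_H G = p\inv(U \x \{a\})$.

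Third, it then suffices to construct an $H$-homeomorphism $\Theta : S \longra S_a \x [a,b]$ lying over $\id_{U \x [a,b]}$ and restricting to $\id$ on $S_a$. Indeed, applying $- \x_H G$ and composing with the canonical $G$-homeomorphism $(S_a \x [a,b]) \x_H G \iso (S_a \x_H G) \x [a,b]$ --- valid since $H$ acts trivially on the interval factor --- yields the asserted $G$-homeomorphism $p\inv(U \x [a,b]) \longra p\inv(U \x \{a\}) \x [a,b]$ over the identity of $U \x [a,b]$, restricting to $\id$ on $p\inv(U \x \{a\})$. To obtain $\Theta$, one observes that $S$ is a Tikhonov space with an action of the compact Lie group $H$, that $S/H = U \x [a,b]$ with $U$ hereditarily paracompact, and that the $H$-orbit type is constant along each vertical segment $\{u\} \x [a,b]$ of $S/H$; one then invokes the classical product-structure theorem for compact Lie group actions of Palais and Bredon \cite[II:7.1]{Bredon_TG} (whose mechanism is an equivariant covering homotopy that flows the interval coordinate down to $a$), taking the base level to be $a$.

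The step I expect to be the main obstacle is precisely the claim that the $H$-conjugacy class of $G_s$, not merely its $G$-conjugacy class, is constant along the verticals of $S/H$: the hypothesis of Theorem~\ref{thm:product} only delivers constancy of the $G$-conjugacy class, and $G$-conjugate subgroups of a compact group $H$ need not be $H$-conjugate. Resolving it uses the defining property of a Palais slice --- that $s$ and $sg$ both lying in $S$ forces $g \in H$ --- together with the local description of the orbit-type strata near $x_0$ from Lemma~\ref{lem:closed}, to pin down a slice small enough that the required constancy holds. Everything else --- the box-shrinking, the induction--restriction adjunction $\mathrm{Map}_G(S \x_H G, -) = \mathrm{Map}_H(S, -)$ underlying the reduction in the third step, and the bookkeeping of the ``restricts to $\id$'' clause --- is routine.
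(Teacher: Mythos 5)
Your proposal coincides with the paper's own proof: choose a Palais slice $S$ at a point $x\in p\inv(y,t)$ with $H=G_x$ compact, cut the tube down to a box $U\x[a,b]$ contained in the open set $p(SG)$, apply the compact-group Covering Homotopy Theorem \cite[II:7.1]{Bredon_TG} to the $H$-space $S$ with $S/H=U\x[a,b]$, and transport the product structure back through $-\x_H G$, keeping track of the ``restricts to $\id$'' clause. The one point where you go beyond the paper is in flagging the verification of Bredon's hypothesis --- that the \emph{$H$-orbit type}, not merely the $G$-orbit type, is constant along each vertical $\{u\}\x[a,b]$ of $S/H$ --- which the paper's proof passes over in silence; the concern is legitimate, since subgroups of $H$ that are conjugate in $G$ need not be conjugate in $H$ (e.g.\ two coordinate circles of a maximal torus of $SU(3)$). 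Your proposed remedy (shrinking the slice, via the slice property and Lemma~\ref{lem:closed}) is not the right mechanism, however: shrinking around $x$ controls nothing along a vertical $\{u\}\x[a,b]$ with $u\neq y$, and Lemma~\ref{lem:closed} speaks only of $G$-strata. The fix is a connectedness argument along $[a,b]$: by Palais' slice theorem applied inside the $H$-space $S$ (compact-group actions are automatically Palais), each point has an $H$-neighborhood in which all isotropy groups are $H$-subconjugate to its own; a compact Lie group that is subconjugate to a $G$-conjugate of itself must equal that conjugate (equal dimension and number of components); hence the $H$-type is locally constant, so constant, on each connected vertical. With that supplied, your argument and the paper's are the same and both are complete.
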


We modify \cite[Proof~II:7.1A]{Bredon_TG} to include all strata and to exclude induction.

\begin{proof}
Let $x \in p\inv(y,t)$.
Since $X$ is Tikhonov with Palais $G$-action, by Palais' slice theorem \cite[2.3.1]{Palais}, there exists a $G_x$-slice $S$ at $x$ in $X$.
The tube $S G$ is open in $X$, hence its image $p(S G)$ is open in the orbit space $X/G = B \x [0,1]$.
By the tube lemma, there are neighborhoods $U$ of $y$ in $B$ and $[a,b]$ of $t$ in $[0,1]$ such that $U \x [a,b] \subseteq p(S G)$.
We may assume equality by reassigning $S$ as $S \cap p\inv(U \x [a,b])$.

Since $G_x$ is a compact Lie group, $S/G_x = SG/G = U \x [a,b]$, and $U$ is hereditarily paracompact, by \cite[Theorem~II:7.1]{Bredon_TG}, $S$ is $G_x$-homeomorphic over $\id_{U \x [a,b]}$ to the product $T \x [a,b]$ with $T := S \cap p\inv(U \x a)$ and $[a,b]$ trivial $G_x$-action.
Note
\[
p\inv(U \x [a,b]) = SG = S \x_{G_x} G \approx (T  \x_{G_x} G) \x [a,b] = p\inv(U \x a) \x [a,b].\qedhere
\]
\end{proof}

\begin{lem}\label{lem:StepB}
Let $y \in B$.
The preimage $p\inv(U \x [0,1])$ is $G$-homeomorphic over $\id_{U \x [0,1]}$ to the product $p\inv(U\x 0) \x [0,1]$ for some neighborhood $U$ of $y$ in $B$.
Furthermore, the $G$-homeomorphism restricts to $\id: p\inv(U \x 0) \longra p\inv(U \x 0) \x 0$.
\end{lem}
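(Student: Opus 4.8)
\emph{Proof proposal.} The plan is to run the classical compactness-and-gluing argument for covering homotopies, now in the Palais-proper equivariant setting, taking the local product structures entirely from Lemma~\ref{lem:StepA}. First I would apply Lemma~\ref{lem:StepA} at the point $(y,t)$ for every $t \in [0,1]$, producing an open box $U_t \x (a_t,b_t) \ni (y,t)$ over which $p$ is a product in the sense of that lemma. Since the intervals $(a_t,b_t)$ cover the compact set $[0,1]$, finitely many suffice; choosing a partition $0 = t_0 < t_1 < \cdots < t_n = 1$ of mesh smaller than a Lebesgue number of this finite cover, and letting $U$ be the intersection of the finitely many neighborhoods $U_t$ that occur, each sub-box $U \x [t_{k-1},t_k]$ is contained in one of the chosen boxes. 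Restricting the corresponding Lemma~\ref{lem:StepA} homeomorphism to such a sub-box and post-composing with the identification of $p\inv(U \x a_t)$ with the slice $p\inv(U \x t_{k-1})$ that it induces at level $t_{k-1}$, I obtain for each $k$ a $G$-homeomorphism over $\id$
\[
h_k : p\inv(U \x [t_{k-1},t_k]) \;\longra\; p\inv(U \x t_{k-1}) \x [t_{k-1},t_k]
\]
that restricts to the identity on the bottom slice $p\inv(U \x t_{k-1})$.

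Next I would splice these one interval at a time. Suppose inductively that $p\inv(U \x [0,t_{k-1}])$ carries a $G$-homeomorphism $\Phi_{k-1}$ over $\id$ onto $p\inv(U \x 0) \x [0,t_{k-1}]$ which is the identity on $p\inv(U \x 0)$; restricting $\Phi_{k-1}$ to the top slice gives a $G$-homeomorphism $\psi_{k-1} : p\inv(U \x t_{k-1}) \approx p\inv(U \x 0)$. Define $\Phi_k$ on $p\inv(U \x [0,t_k])$ to equal $\Phi_{k-1}$ on $p\inv(U \x [0,t_{k-1}])$ and to equal $(\psi_{k-1} \x \id_{[t_{k-1},t_k]}) \circ h_k$ on $p\inv(U \x [t_{k-1},t_k])$. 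On the overlap $p\inv(U \x t_{k-1})$ the first rule yields $\psi_{k-1}$ and, because $h_k$ is the identity there, so does the second; since both pieces are closed, the pasting lemma \cite[18.3]{Munkres} shows $\Phi_k$ is a well-defined $G$-homeomorphism over $\id$, still the identity on $p\inv(U \x 0)$ since $\Phi_{k-1}$ was. Taking $k=n$ produces the asserted $G$-homeomorphism $p\inv(U \x [0,1]) \approx p\inv(U \x 0) \x [0,1]$ over $\id_{U \x [0,1]}$ restricting to $\id$ on $p\inv(U \x 0)$.

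The only genuinely delicate point is the uniformization in the first paragraph: a priori the neighborhoods $U_t$ depend on $t$, so one must first extract a finite subcover of $[0,1]$, then (via a Lebesgue number, or a tube-lemma argument applied to $\{y\} \x [0,1]$ in $B \x [0,1]$) replace them by a common refinement $U$ compatible with a single partition, and finally recenter each restricted product structure so that it is the identity over the \emph{left endpoint} $t_{k-1}$ rather than over the endpoint $a_t$ supplied by Lemma~\ref{lem:StepA}. Everything else — continuity, equivariance, the product-over-$\id_{U\x[0,1]}$ property, and the bottom-slice normalization — follows immediately from Lemma~\ref{lem:StepA} and the pasting lemma, so once this bookkeeping is settled the proof is routine.
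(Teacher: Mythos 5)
Your proposal is correct and follows essentially the same route as the paper's proof: cover $\{y\}\x[0,1]$ by the boxes from Lemma~\ref{lem:StepA}, extract a finite subcover, intersect the $U_t$, subdivide $[0,1]$ via a Lebesgue number, recenter each local product structure at the left endpoint of its subinterval, and splice with the pasting lemma so that the bottom-slice transition maps compose. The paper records the same composite $\phi_{t_0}\circ\cdots\circ\phi_{t_{i-1}}$ on each slice $p\inv(U\x\frac{i}{n})$ that your recursion produces as $\psi_{k-1}$.
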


Our argument reexplains \cite[Proof~II:7.1B]{Bredon_TG} but now includes all the strata.

\begin{proof}
For each $t \in [0,1]$, by Lemma~\ref{lem:StepA}, there exist a neighborhood $U_t$ of $y$ in $B$, a neighborhood $[a_t,b_t]$ of $t$ in $[0,1]$, and a $G$-homeomorphism $\phi_t$ over the identity:
\[
\phi_t : p\inv(U_t \x [a_t,b_t]) \longra p\inv(U_t \x a_t) \x [a_t,b_t] \quad\text{with}\quad \phi_t|p\inv(U_t \x a_t) = \id.
\]
Since $[0,1]$ is compact, there is a finite subset $F \subset [0,1]$ with $(0,1) = \bigcup_{t \in F} (a_t,b_t)$.
Define $U := \bigcap_{t \in F} U_t$, a neighborhood of $y$ in $B$.
By Lebesgue's number lemma, there is $n \in \N$ such that each $\left[\frac{i}{n},\frac{i+1}{n}\right] \subseteq [a_{t_i},b_{t_i}]$ for some $t_i \in F$.
Thus we obtain
\begin{gather*}
\vphi_i : p\inv(U \x \left[{\textstyle\frac{i}{n}},{\textstyle\frac{i+1}{n}}\right]) \longra p\inv(U \x 0) \x \left[{\textstyle\frac{i}{n}},{\textstyle\frac{i+1}{n}}\right]\\
\vphi_i| = \phi_{t_0} \circ \cdots \circ \phi_{t_{i-1}} : p\inv(U \x {\textstyle\frac{i}{n}}) \longra p\inv(U \x 0).
\end{gather*}
Then $\psi := \vphi_0 \cup \cdots \cup \vphi_{n-1}: p\inv(U \x [0,1]) \longra p\inv(U \x 0) \x [0,1]$ as desired.
\end{proof}

We shall avoid Bredon's transfinite induction by a Milnor-style replacement trick.
Our statement is more generally in terms of predicates $\Pi$ (that is, unary relations).

\begin{prop}\label{prop:countable}
Let $(B,\cT)$ be a normal Hausdorff space.
Let $\Pi \subseteq \cT$ be preserved under all open subsets and all disjoint unions.
Suppose $\cU \subseteq \Pi$ for a locally finite open cover $\cU$ of $B$.
Then $\cV \subseteq \Pi$ for a \emph{countable} locally finite open cover $\cV$ of $B$.
\end{prop}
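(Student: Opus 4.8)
\emph{Plan.} The plan is to run Milnor's ``replacement'' trick for covers and then read off the two closure hypotheses on $\Pi$ only at the very end. Since $B$ is normal and $\cU$ is a locally finite open cover, I would first fix a subordinate partition of unity $\{\phi_U\}_{U \in \cU}$: continuous functions $\phi_U : B \longra [0,1]$ with $\{b : \phi_U(b) > 0\} \subseteq U$ and $\sum_{U \in \cU} \phi_U \equiv 1$, where local finiteness of $\cU$ makes this sum locally finite and guarantees the existence of the $\phi_U$ via the shrinking lemma and Urysohn's lemma in the normal space $B$.

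Next, for each finite nonempty subset $S \subseteq \cU$ I would define
\[
W_S ~:=~ \left\{ b \in B ~|~ \min_{U \in S} \phi_U(b) > \phi_{U'}(b) \text{ for every } U' \in \cU \setminus S \right\}
\]
and check three properties. (i) $W_S$ is open: each point of $B$ has a neighborhood on which only finitely many $\phi_{U'}$ are nonzero, so $b \longmapsto \sup_{U' \notin S} \phi_{U'}(b)$ is locally a finite maximum of continuous functions, hence continuous, and $W_S$ is the open set on which the continuous function $\min_{U \in S} \phi_U$ strictly dominates it. (ii) $W_S \subseteq \bigcap_{U \in S} U$, because $b \in W_S$ forces $\phi_U(b) > 0$ for every $U \in S$. (iii) If $|S| = |S'|$ and $S \neq S'$, then $W_S \cap W_{S'} = \emptyset$: picking $U_0 \in S \setminus S'$ and $U_0' \in S' \setminus S$, a common point $b$ would yield both $\phi_{U_0}(b) > \phi_{U_0'}(b)$ and $\phi_{U_0'}(b) > \phi_{U_0}(b)$. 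Finally, $\{W_S\}_S$ covers $B$: for $b \in B$ the set $S(b)$ of those $U$ with $\phi_U(b) = \max_{U'} \phi_{U'}(b)$ is finite and nonempty (the maximum is positive and attained only finitely often), and $b \in W_{S(b)}$.

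Then I would set $V_n := \bigsqcup_{|S| = n} W_S$ for $n \geq 1$; by (iii) this is genuinely a disjoint union, so $V_n$ is open, and $\cV := \{ V_n \}_{n \geq 1}$ is a countable open cover of $B$ since $b \in W_{S(b)} \subseteq V_{|S(b)|}$. It is locally finite: given $b$, choose a neighborhood $N$ meeting only $k$ members of $\cU$; by (ii), $N$ can meet $W_S$ only when $S$ consists entirely of those $k$ members, so $N \cap V_n = \emptyset$ for all $n > k$, whence $N$ meets at most $V_1, \ldots, V_k$. Lastly, $\cV \subseteq \Pi$: each $W_S$ is an open subset of any $U \in S \subseteq \cU \subseteq \Pi$, hence $W_S \in \Pi$ because $\Pi$ is preserved under open subsets, and then $V_n \in \Pi$ because $\Pi$ is preserved under disjoint unions.

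I expect the only delicate points to be the continuity argument that each $W_S$ is open --- where local finiteness of $\cU$ is used essentially --- and the verification that the resulting countable cover $\cV$ is still locally finite; the two closure axioms on $\Pi$ enter only through the final one-line deduction $W_S \in \Pi \Rightarrow V_n \in \Pi$, so the real content of the proposition is the classical, group-free combinatorics of Milnor's construction.
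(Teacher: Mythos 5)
Your proposal is correct and follows essentially the same route as the paper's proof: the same Milnor replacement trick with the same sets (your $W_S$ is exactly the paper's $V_F = \{q_F > 0\}$), the same equal-cardinality disjointness and local-finiteness arguments, and the two closure axioms on $\Pi$ invoked at the same two places at the end. The only point the paper makes explicit that you omit is the reduction to infinite $\cU$ --- for finite $\cU$ your vacuous case $S = \cU$ gives $W_{\cU} = B$, which need not lie in $\Pi$; but then $\cV := \cU$ is already countable, so this is a one-line patch.
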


For local trivializations, \cite[Hilfsatz~2]{tomDieck_numerable} \cite[4:12.1]{Husemoller} work.
Originally, Milnor proved it for $(B,\cT)$ paracompact and no input $\cU$ \cite[p25--26]{Milnor_notes} \cite[5.9]{MilnorStasheff}.

\begin{proof}
Since $B \in T_4$, by Dieudonn\'e's shrinking lemma \cite[Th\'eor\`eme 6]{Dieudonne} and Urysohn's lemma \cite[Satz~25]{Urysohn}, it follows as noted in \cite[Proposition~2]{Michael} that $\cU$ admits a subordinate partition of unity $\{t_U: B \longra [0,1] \}_{U \in \cU}$ with the \emph{same} index set.
That is, $t_U$ are continuous with $\{t_U > 0\} \subseteq U$ and $\sum_{U \in \cU} t_U = 1$.

We may assume $\cU$ is infinite.
For each nonempty finite $F \subset \cU$, define the~function
\[
q_F ~:=~ \max\left\{ ~0, ~\min_{U \in F} t_U - \max_{U \notin F} t_U ~\right\} ~:~ B \longra [0,1].
\]
Observe that $q_F$ is continuous, since the $\min$ is over the finite set $F$ and each $x \in B$ admits a neighborhood $N$ meeting only finitely many elements of $\cU$ hence of $\cU-F$.
Then its support $V_F := \{ q_F > 0 \}$ is open.
So $V_F \in \Pi$ since $V_F \subseteq U$ for some $U \in F$.

Let $E \neq F$ be finite subsets of $\cU$ with $\card\,E = \card\,F$.  We show $V_E \cap V_F = \emptyset$.
As $E$ and $F$ are distinct of same cardinality, there are $O \in E - F$ and $P \in F - E$.
If $x \in V_E$ then $\DS t_O(x) \geqslant \min_{U \in E} t_U(x) > \max_{U \notin E} t_U(x) \geqslant t_P(x)$.
Similarly, if $x \in V_F$ then $\DS t_P(x) \geqslant \min_{U \in F} t_U(x) > \max_{U \notin F} t_U(x) \geqslant t_O(x)$.
Thus $V_E \cap V_F = \emptyset$, as $t_O > t_P > t_O$ on $V_E \cap V_F$.
Then the union $V_n := \bigcup \{ V_F ~|~ \card\,F = n \}$ is disjoint.
Therefore $V_n \in \Pi$.

Define $\cV := \{ V_n ~|~ n \in \Z_{>0} \}$, a countable collection of open sets in $B$.
We show $\cV$ is a cover of $B$.
Let $x \in B$.
Since $\sum_{U \in \cU} t_U(x) = 1$, the set $S := \{ U \in \cU ~|~ t_U(x) > 0 \}$ is nonempty finite.
Then $\DS q_S(x) = \min_{U \in S} t_U(x) > 0$.
So $x \in V_S \subseteq V_{\card\, S}$.
Lastly, we show $\cV$ is locally finite.
As $\cU$ is locally finite, there is a neighborhood $N$ of $x$ in $B$ with $R := \{ U \in \cU ~|~ N \cap U \neq \emptyset \}$ finite.
Suppose $N \cap V_F \neq \emptyset$.
Then $q_F(y) > 0$ for some $y \in N$.
So $t_U(y)>0$ hence $y \in U$ for all $U \in F$.
Thus $F \subseteq R$.
That is, $\{F ~|~ N \cap V_F \neq \emptyset \} \subseteq 2^R$, which is a finite set.
Therefore $\cV$ is locally finite.
\end{proof}

We simplify \cite[Proof~II:7.1C]{Bredon_TG} to include all strata and no infinite ordinals.
For ease of reading, we drop the $\x 0$ for $p$-preimages that occur from Lemma~\ref{lem:StepB}.
The key idea is to construct a $G$-isotopy $\delta$ on the overlap for continuous transition.

\begin{proof}[Proof of Theorem~\ref{thm:product}]
Since $B$ is paracompact Hausdorff hence normal by \cite[Th\'eor\`eme~1]{Dieudonne}, by Lemma~\ref{lem:StepB} and Proposition~\ref{prop:countable}, we obtain a countable locally finite open cover $\cV = \{ V_n \}_{n>0}$ of $B$ and $G$-homeomorphisms $\psi_n: p\inv(V_n \x [0,1]) \longra p\inv(V_n) \x [0,1]$ over $\id_{V_n \x [0,1]}$ restricting to $\id: p\inv(V_n \x 0) \longra p\inv(V_n) \x 0$.

Consider the open sets $U_n := \bigcup_{i < n} V_i$ in $B$, with $U_1 = \emptyset$.
Since $\cV$ is locally finite, it suffices to recursively define similar $G$-homeomorphisms $\phi_n : p\inv(U_n \x [0,1]) \longra p\inv(U_n) \x [0,1]$ such that $\phi_{n+1}| = \phi_n|$ over $(U_n - V_n) \x [0,1]$.
Define $\phi_1 = \id_\emptyset$.

Assume $\phi_n$ is defined.
Shorten $U := U_n$ and $V := V_n$, so $U_{n+1} = U \cup V$.
Write
\[
\eps ~:=~ \proj_X \circ \phi_n \circ \psi_n\inv| ~:~ p\inv(U \cap V) \x [0,1] \longra p\inv(U \cap V).
\]
As $U \cup V$ is paracompact so normal, disjoint closed sets $V-U$ and $U-V$ have disjoint \emph{closed} neighborhoods $C_0$ and $C_1$ in $U \cup V$.
By Urysohn's lemma \cite[Satz~25]{Urysohn}, there is a map $f: U \cup V \longra [0,1]$ with $f(C_i) = \{i\}$.
Define a $G$-homeomorphism
\[
\delta : p\inv(U \cap V) \x [0,1] \longra p\inv(U \cap V) \x [0,1] ~;~ (x,t) \longmapsto (\eps(x,f(px)t), t)
\]
with inverse $\delta\inv(y,t) = (\proj_X \psi_n \phi_n\inv (y,f(py)t), t)$.
Note if $px \in V-U$ then $\delta(x,t) = (x,t)$, and if $px \in U-V$ then $\delta(x,t) = \phi_n \psi_n\inv (x,t)$.
Define the $G$-bijection
\[
\phi_{n+1} ~:=~ 
\left\{\begin{aligned}
\phi_n & \text{ on } U - V\\
\delta \circ \psi_n & \text{ on } U \cap V\\
\psi_n & \text{ on } V - U
\end{aligned}\right\}
~:~ p\inv(U_{n+1} \x [0,1]) \longra p\inv(U_{n+1}) \x [0,1].
\]
By the pasting lemma, $\phi_{n+1}$ is continuous as $f$ is constant on each neighborhood $C_i$; similarly for the formula of $\phi_{n+1}\inv$.
Then $\phi_{n+1}$ is obtained.
Induction is complete.
\end{proof}

We conclude with a summary of our uniqueness results, now inclusive of noncompact $G$.

\begin{cor}\label{cor:unique2}
Let $G$ be an arbitrary Lie group.
Let $\cF \subseteq \cpt(G)$ with no conjugate elements.
Let $B$ be an $(\cF)$-filtered metrizable space.
Two maps $f, g: B \longra \bB_\cF G$ are stratified-homotopic if and only if $f^*(\bE_\cF G)$ and $g^*(\bE_\cF G)$ are $G$-homeomorphic over $\id_B$.
\end{cor}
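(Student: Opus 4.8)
The plan is to prove the two implications separately, each reducing to a theorem already at hand. The implication ``$f^*(\bE_\cF G)$ and $g^*(\bE_\cF G)$ are $G$-homeomorphic over $\id_B$ $\Longrightarrow$ $f$ and $g$ are stratified-homotopic'' is exactly Theorem~\ref{thm:homotopy}, so nothing new is needed there; the forward implication is the substantive one.

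For the forward implication, suppose $H: B \x [0,1] \longra \bB_\cF G$ is a stratified homotopy with $H(b,0)=f(b)$ and $H(b,1)=g(b)$. I would form the pullback $G$-space
\[
X ~:=~ H^*(\bE_\cF G) ~=~ \{ (b,s,e) \in (B \x [0,1]) \x \bE_\cF G ~|~ H(b,s) = eG \},
\]
with $G$ acting through the $\bE_\cF G$-coordinate and $p: X \longra B \x [0,1]$ the projection, which is the orbit map via the canonical homeomorphism $X/G \iso B \x [0,1]$. The first task is that $X \in G\text{-}\cM$: exactly as in Proof~\ref{thm:homotopy}, one has $\bE_\cF G \in G\text{-}\cM$ and $B \x [0,1]$ metrizable, so the $G$-invariant subspace $X$ of $(B \x [0,1]) \x \bE_\cF G$ is Palais $G$-metrizable, in particular Tikhonov; and $B$, being metrizable, is hereditarily paracompact Hausdorff.

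The second and more delicate task is to check the remaining hypothesis of the Covering Homotopy Theorem~\ref{thm:product}: that $(G_x)=(G_y)$ whenever $\proj_B(px)=\proj_B(py)$. Since $\cF \subseteq \cpt(G)$ has no conjugate elements and every point $e$ of $\bE_\cF G = \bigbcoast_{H \in \cF} \bE(H \bs G)$ satisfies $G_e = G_{e_H}$ for some $H$, the orbit-type strata $(\bB_\cF G)_{(K)}$ for $K \in \cF$ are pairwise disjoint, each consists of orbits of type exactly $(K)$, and they cover $\bB_\cF G$ because $\cpt(G)$ of a Lie group admits no infinite strictly descending chain of subgroups. As $H$ is stratified, $H(B_{(K)} \x [0,1]) \subseteq (\bB_\cF G)_{(K)}$ for each $K$; thus if $x=(b,s,e)$ and $y=(b,s',e')$ in $X$ have common $B$-coordinate $b$, then $b$ lies in a single stratum $B_{(K)}$, whence $eG = H(b,s)$ and $e'G = H(b,s')$ both lie in $(\bB_\cF G)_{(K)}$, so $(G_x)=(G_e)=(K)=(G_{e'})=(G_y)$.

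With both tasks done, Theorem~\ref{thm:product} yields a $G$-homeomorphism $X \iso p\inv(B\x\{0\})\x[0,1]$ over $\id_{B\x[0,1]}$. Restricting it over $B\x\{1\}$ gives a $G$-homeomorphism $p\inv(B\x\{1\}) \iso p\inv(B\x\{0\})$ over $\id_B$; since $H(b,0)=f(b)$ and $H(b,1)=g(b)$, these preimages are canonically $f^*(\bE_\cF G)$ and $g^*(\bE_\cF G)$, completing the proof. I expect the main obstacle to be the orbit-type constancy step of the third paragraph: it is where the \emph{stratified} (not merely filtered) nature of $H$, the isovariance of $\bigbcoast$, and the disjointness and covering of the $\cF$-indexed strata of $\bB_\cF G$ all enter; the remaining manipulations are a routine assembly of Theorems~\ref{thm:homotopy} and~\ref{thm:product}.
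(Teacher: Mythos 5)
Your proposal is correct and follows essentially the same route as the paper's proof: the reverse implication is Theorem~\ref{thm:homotopy}, and the forward implication pulls back the stratified homotopy, verifies that the total space is Palais $G$-metrizable (hence Tikhonov) exactly as in the proof of Theorem~\ref{thm:homotopy}, notes that metrizable $B$ is hereditarily paracompact, extracts the orbit-type constancy along $[0,1]$-fibers from stratifiedness, and then applies the Covering Homotopy Theorem~\ref{thm:product} and restricts to $B \x \{1\}$. One incidental assertion in your third paragraph is off: the strata $(\bB_\cF G)_{(K)}$ for $K \in \cF$ need \emph{not} cover $\bB_\cF G$, because a point of $\bE(H \bs G)$ has isotropy an intersection of conjugates of $H$, which for non-normal $H$ can fail to be conjugate to any member of $\cF$; fortunately your argument never uses that covering --- it only needs that each $b \in B$ lies in some stratum $B_{(K)}$ (which follows from the $(\cF)$-filtered hypothesis together with well-foundedness of $\geqslant$ on $\cpt(G)$) and that $(\bB_\cF G)_{(K)}$ consists precisely of orbits of type $(K)$.
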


\begin{proof}
The reverse direction is Theorem~\ref{thm:homotopy}.
For the forward direction, let $h: B \x [0,1] \longra \bB_\cF G$ be a stratified homotopy from $f$ to $g$.
Write $X := h^*(\bE_\cF G)$, which is a $G$-metrizable Palais $G$-space as shown in Proof~\ref{thm:homotopy}.
Hence $X$ is Tikhonov.
The metrizable space $B$ is hereditarily paracompact \cite{Stone1}.
Write $p: X \longra B \x [0,1]$.
As $h$ is stratified, $(G_x) = (G_y)$ if $\proj_B(px) = \proj_B(py)$.
By Theorem~\ref{thm:product}, there is a $G$-homeomorphism $X \longra f^*(\bB_\cF G) \x [0,1]$ over $\id_{B \x [0,1]}$.
It restricts to a $G$-homeomorphism $g^*(\bB_\cF G) = p\inv(B \x 1) \longra f^*(\bB_\cF G) \x 1$.
\end{proof}

\begin{rem}[Baum--Connes--Higson]
For $G$ a locally compact Hausdorff group, cardinal $\kappa = \aleph_0$, family $\cF = \cpt$, and $B$ a metrizable space, a weaker variation of Corollary~\ref{cor:unique2} is sketched in \cite[Appendix~3]{BCH}.
Their correspondence is between ordinary homotopy classes of maps $B \longra B_\cpt^{\aleph_0} G$ and their so-called `homotopy' classes of Palais-proper $G$-spaces over $B$, which I instead would call \emph{concordance} classes.
\end{rem}

Here is our full classification generalizing \cite[2.6.2, 2.7.10]{Palais_book} \cite[II:9.7]{Bredon_TG}.
We allow noncompact $G$, infinite $\cF$, and infinite $\dim(B)$; thus, we fulfill and exceed Palais' ambition \cite[\S4.5]{Palais}.

\begin{thm}\label{thm:unique}
Let $G$ be an arbitrary Lie group.
Let $\cF \subseteq \cpt(G)$ with no conjugate elements.
Let $B$ be an $(\cF)$-filtered metrizable space.
Taking pullback of $\bE_\cF G$ is a bijection from stratified-homotopy classes of stratified maps $B \longra \bB_\cF G$ to isomorphism classes of metrizable spaces that are equipped with Palais $G$-action, isotropy conjugate to members of $\cF$, and orbit space $B$.
\end{thm}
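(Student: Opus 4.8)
The plan is to split bijectivity into well-definedness, injectivity, and surjectivity, and to observe that only surjectivity --- ``existence with the join model $\bE_\cF G$'' --- requires genuinely new work, since the first two follow from Section~4. For well-definedness of the assignment $[f]\mapsto[f^*(\bE_\cF G)]$ into the stated target: for any stratified $f\colon B\longra\bB_\cF G$ the pullback $f^*(\bE_\cF G)\subseteq B\x\bE_\cF G$ is metrizable (as $\bE_\cF G\in G\text{-}\cM$, shown in Proof~\ref{thm:homotopy}) and a Palais $G$-space with orbit space $B$, and its isotropy is conjugate into $\cF$ because $f$ stratified sends each point into some stratum $(\bB_\cF G)_{(H)}$ with $(H)\in(\cF)$, so $G_{(b,e)}=G_e$ is conjugate to $H$; that stratified-homotopic maps have $G$-homeomorphic (over $\id_B$) pullbacks is Theorem~\ref{thm:homotopy}. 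Injectivity is exactly the forward implication of Corollary~\ref{cor:unique2}.

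For surjectivity, let $X$ be a metrizable Palais $G$-space with isotropy conjugate into $\cF$ and $X/G=B$. By Palais' slice theorem and the no-conjugates hypothesis on $\cF$, the slice tubes form an isovariant cover of $X$ by $\cF$-tubes; grouping by type, for $H\in\cF$ let $Y_H$ be the union of the type-$H$ tubes (an open $G$-subset with metrizable orbit space), and refine --- downstairs in $Y_H/G$, pulling back --- to a locally finite cover of $Y_H$ by type-$H$ tubes. Because a disjoint union of type-$H$ tubes is again a type-$H$ tube, Proposition~\ref{prop:countable} (with $\Pi$ the open sets of $Y_H/G$ whose preimage is a disjoint union of type-$H$ tubes) replaces this by a \emph{countable} locally finite cover $\{T_{H,m}\approx S_{H,m}\x_H G\}_{m\in\N}$, with tube projections $q_{H,m}\colon T_{H,m}\longra H\bs G$, subordinate to a $G$-invariant partition of unity $\{\mu_{H,m}\}_m$ of $Y_H$. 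Since $\cF$ is countable~\cite[Cor.~3.9]{Khan_Lie}, a $G$-invariant metric also yields a partition of unity $\{\nu_H\}_{H\in\cF}$ of $X$ with $\{\nu_H>0\}=Y_H$ (normalize $2^{-k}\cdot d(-,X-Y_{H_k})/(1+d(-,X-Y_{H_k}))$). Define $F=(F_H)_{H\in\cF}\colon X\longra\prod_{H}C\bE(H\bs G)$ by $F_H(x):=\bigl[\,([q_{H,m}(x),\mu_{H,m}(x)])_m,\ \nu_H(x)\,\bigr]$ on $Y_H$ and $F_H(x):=0$ elsewhere; by Remark~\ref{rem:cone_metric} each $F_H$ is continuous, the level coordinate $\nu_H$ being globally continuous and the base coordinate a Milnor classifying map on $\{\nu_H>0\}$.

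The crux is isovariance of $F$. If $G_x$ is conjugate to $H_0\in\cF$ then $x\in X_{(H_0)}\subseteq Y_{H_0}$, so $\nu_{H_0}(x)>0$; and whenever $\mu_{H_0,m}(x)>0$ the point $x$ lies in the type-$H_0$ tube $T_{H_0,m}$, say $x=[s,g]$ with $G_s\leqslant H_0$ and $G_x=g\inv G_s g$, so $g\inv G_s g$ conjugate to $H_0$ with $G_s\leqslant H_0$ forces $G_s=H_0$ (closed subgroups of a Lie group are cohopfian, as in Lemma~\ref{lem:closed}), hence $G_{q_{H_0,m}(x)}=g\inv H_0g=G_x$. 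Thus $G_{F_{H_0}(x)}=\bigcap_m G_{q_{H_0,m}(x)}=G_x$, while every other coordinate satisfies $G_{F_H(x)}\supseteq G_x$ automatically, so $G_{F(x)}=G_x$, the isovariance condition of $\bigbcoast$ holds at the $H_0$-coordinate, and $F(x)\neq\{0\}$ since $\sum_H\nu_H=1$; hence $F\colon X\longra\bE_\cF G$ is isovariant. Then $f:=F/G\colon B\longra\bB_\cF G$ is stratified, and $\Psi\colon X\longra f^*(\bE_\cF G)$, $x\longmapsto(xG,F(x))$, is a bijection by isovariance (as in Proof~\ref{thm:classify}) and open because openness is local on the $\cF$-tube cover, where it reduces to the classical computation for Milnor's classifying map~\cite[4:12.2]{Husemoller}. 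So $X\iso f^*(\bE_\cF G)$ over $B$, proving surjectivity.

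I expect the main obstacle to be precisely this surjectivity step: arranging the two nested partitions of unity --- outer over the countable set $\cF$, inner over the type-$H$ tubes --- so that at a point of the $(H_0)$-stratum the one type whose coordinate is ``active'' is the isovariant one, together with the cohopfian bookkeeping that lets that coordinate read off $G_x$ exactly. The detour through Proposition~\ref{prop:countable}, forced because $\bE(H\bs G)$ is Milnor's \emph{countable} join, is what dictates the type-by-type organization, since a disjoint union of tubes of \emph{differing} types need not be a tube.
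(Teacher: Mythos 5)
Your proposal is correct in outline, but for the surjectivity step it takes a genuinely different route from the paper. The paper's surjectivity argument is three lines: invoke Antonyan--deNeymet to replace the metric by a $G$-invariant one, apply Theorem~\ref{thm:extensor} with $A=\emptyset$ to produce an isovariant $G$-map $\Phi\colon X\longra\bE_\cF G$, and cite an external result to see that $x\longmapsto(xG,\Phi(x))$ is a $G$-homeomorphism onto $(\Phi/G)^*(\bE_\cF G)$. You instead rebuild the classifying map by hand: an isovariant slice cover, grouped by type, refined via Proposition~\ref{prop:countable} to countable locally finite covers of each $Y_H$ by type-$H$ tubes, and a two-level partition of unity feeding a Milnor-style formula into each coarse cone $C\bE(H\bs G)$, with the cohopfian argument guaranteeing that the $H_0$-coordinate reads off $G_x$ exactly on $X_{(H_0)}$. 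This checks out (the key points --- open $G$-subsets and disjoint unions of type-$H$ tubes are again type-$H$ tubes, $X_{(H_0)}\subseteq Y_{H_0}$, continuity at $\partial Y_H$ via the coarse-cone criterion of Remark~\ref{rem:cone_metric} --- all hold), and it is a nice reuse of Proposition~\ref{prop:countable}, which the paper deploys only for the covering homotopy theorem. What the paper's route buys is brevity and reuse: Theorem~\ref{thm:extensor} is already indispensable for injectivity (via Theorem~\ref{thm:homotopy} and Corollary~\ref{cor:unique2}), so applying it once more with $A=\emptyset$ costs nothing; what yours buys is an explicit formula for the classifying map in the spirit of Husem\"oller and tomDieck, at the price of the bookkeeping you describe. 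Two small repairs are needed: your partitions of unity $\{\nu_H\}$ and $\{\mu_{H,m}\}$ must be $G$-invariant, which requires either citing Antonyan--deNeymet for a $G$-invariant metric (the hypothesis only gives metrizability) or constructing them downstairs on the metrizable orbit spaces $B$ and $Y_H/G$ and pulling back; and the openness of $\Psi$ is asserted rather than proved, though the reduction to the tube-local computation of Proof~\ref{thm:classify} is sound.
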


\begin{proof}
Well-definition and injectivity of this correspondence are Corollary~\ref{cor:unique2}.
To show surjectivity, let $X$ be a metrizable space with Palais $G$-action, isotropy conjugate into $\cF$, and $B = X/G$ (or a stratified homeomorphism).
By Antonyan--deNeymet \cite[Theorem~B]{AdN}, $X$ admits a $G$-invariant metric.
As $X \in (G,\cF)\text{-}\cM$, by Theorem~\ref{thm:extensor}, there is an isovariant $G$-map $\Phi: X \longra \bE_\cF G$.
The induced map $X \longra (\Phi/G)^*(\bE_\cF G)$ is a $G$-homeomorphism over $\id_B$ \cite[2.5]{Khan_Lie}.
\end{proof}

\subsection*{Acknowledgements}

I am grateful to Dennis Burke for discussion on orthocompactness and to Klaas Pieter Hart for helping me to locate Lindel\"of's relevant work.
The referee's request inspired Remark~\ref{rem:comparison}.

\bibliographystyle{elsarticle-harv}
\bibliography{CardinalClassifyingSpace}

\begin{thebibliography}{83}
\expandafter\ifx\csname natexlab\endcsname\relax\def\natexlab#1{#1}\fi
\providecommand{\url}[1]{\texttt{#1}}
\providecommand{\href}[2]{#2}
\providecommand{\path}[1]{#1}
\providecommand{\DOIprefix}{doi:}
\providecommand{\ArXivprefix}{arXiv:}
\providecommand{\URLprefix}{URL: }
\providecommand{\Pubmedprefix}{pmid:}
\providecommand{\doi}[1]{\href{http://dx.doi.org/#1}{\path{#1}}}
\providecommand{\Pubmed}[1]{\href{pmid:#1}{\path{#1}}}
\providecommand{\bibinfo}[2]{#2}
\ifx\xfnm\relax \def\xfnm[#1]{\unskip,\space#1}\fi
\bibitem[{Abels(1978)}]{Abels}
\bibinfo{author}{Abels, H.}, \bibinfo{year}{1978}.
\newblock \bibinfo{title}{A universal proper {$G$}-space}.
\newblock \bibinfo{journal}{Math. Z.} \bibinfo{volume}{159},
  \bibinfo{pages}{143--158}.
\newblock \DOIprefix\doi{10.1007/BF01214487}.
\bibitem[{Ageev(2012)}]{Ageev4}
\bibinfo{author}{Ageev, S.M.}, \bibinfo{year}{2012}.
\newblock \bibinfo{title}{\foreignlanguage{russian}{Универсальные
  {$G$}-пространства {П}але и изовариантные
  абсолютные экстензоры}}.
\newblock
  \bibinfo{journal}{\foreignlanguage{russian}{Математический
  Сборник}} \bibinfo{volume}{203}, \bibinfo{pages}{3--34}.
\newblock \DOIprefix\doi{10.1070/SM1989v063n02ABEH003290}.
\bibitem[{Aleksandrov(1937)}]{Aleksandrov}
\bibinfo{author}{Aleksandrov, P.}, \bibinfo{year}{1937}.
\newblock \bibinfo{title}{Diskrete {R}\"aume}.
\newblock
  \bibinfo{journal}{\foreignlanguage{russian}{Математический
  Сборник}} \bibinfo{volume}{2}, \bibinfo{pages}{501--519}.
\bibitem[{Aleksandrov and Hopf(1935)}]{AH}
\bibinfo{author}{Aleksandrov, P.S.}, \bibinfo{author}{Hopf, H.},
  \bibinfo{year}{1935}.
\newblock \bibinfo{title}{Topologie}.
\newblock Number~\bibinfo{number}{45} in \bibinfo{series}{Grundlehren der
  Mathematischen Wissenschaften}, \bibinfo{publisher}{Springer},
  \bibinfo{address}{Berlin}.
\bibitem[{Antonyan et~al.(2009)Antonyan, Antonyan and
  Rodr{\'i}guez{-}Medina}]{AAR}
\bibinfo{author}{Antonyan, N.}, \bibinfo{author}{Antonyan, S.},
  \bibinfo{author}{Rodr{\'i}guez{-}Medina, L.}, \bibinfo{year}{2009}.
\newblock \bibinfo{title}{Linearization of proper group actions}.
\newblock \bibinfo{journal}{Topology Appl.} \bibinfo{volume}{156},
  \bibinfo{pages}{1946--1956}.
\newblock \DOIprefix\doi{10.1016/j.topol.2009.03.016}.
\bibitem[{Antonyan et~al.(2012)Antonyan, Antonyan and Varela{-}Velasco}]{AAV}
\bibinfo{author}{Antonyan, N.}, \bibinfo{author}{Antonyan, S.},
  \bibinfo{author}{Varela{-}Velasco, R.}, \bibinfo{year}{2012}.
\newblock \bibinfo{title}{Universal {$G$}-spaces for proper actions of locally
  compact groups}.
\newblock \bibinfo{journal}{Topology Appl.} \bibinfo{volume}{159},
  \bibinfo{pages}{1159--1168}.
\newblock \DOIprefix\doi{10.1016/j.topol.2011.11.036}.
\bibitem[{Antonyan(1985)}]{Antonyan5}
\bibinfo{author}{Antonyan, S.}, \bibinfo{year}{1985}.
\newblock
  \bibinfo{title}{\foreignlanguage{russian}{Эквивариантное
  обобщение теоремы Дугунджи}}.
\newblock
  \bibinfo{journal}{\foreignlanguage{russian}{Математические
  Заметки}} \bibinfo{volume}{38}, \bibinfo{pages}{608--616}.
\newblock \DOIprefix\doi{10.1007/BF01158413}.
\bibitem[{Antonyan(1987)}]{Antonyan1}
\bibinfo{author}{Antonyan, S.}, \bibinfo{year}{1987}.
\newblock \bibinfo{title}{Equivariant embeddings into {$G$}-{AR}s}.
\newblock \bibinfo{journal}{Glas. Mat. Ser. III} \bibinfo{volume}{22},
  \bibinfo{pages}{503--533}.
\bibitem[{Antonyan(1999)}]{Antonyan3}
\bibinfo{author}{Antonyan, S.}, \bibinfo{year}{1999}.
\newblock \bibinfo{title}{Extensorial properties of orbit spaces of proper
  group actions}.
\newblock \bibinfo{journal}{Topology Appl.} \bibinfo{volume}{98},
  \bibinfo{pages}{35--46}.
\newblock \DOIprefix\doi{10.1016/S0166-8641(99)00039-5}.
\bibitem[{Antonyan(2005)}]{Antonyan2}
\bibinfo{author}{Antonyan, S.}, \bibinfo{year}{2005}.
\newblock \bibinfo{title}{Orbit spaces and unions of equivariant absolute
  neighborhood extensors}.
\newblock \bibinfo{journal}{Topology Appl.} \bibinfo{volume}{146/147},
  \bibinfo{pages}{289--315}.
\newblock \DOIprefix\doi{10.1016/j.topol.2003.05.004}.
\bibitem[{Antonyan and de~Neymet(2003)}]{AdN}
\bibinfo{author}{Antonyan, S.}, \bibinfo{author}{de~Neymet, S.},
  \bibinfo{year}{2003}.
\newblock \bibinfo{title}{Invariant pseudometrics on {P}alais proper
  {$G$}-spaces}.
\newblock \bibinfo{journal}{Acta Math. Hungar.} \bibinfo{volume}{98},
  \bibinfo{pages}{59--69}.
\newblock \DOIprefix\doi{10.1023/A:1022853304454}.
\bibitem[{Arens(1946)}]{Arens_homeo}
\bibinfo{author}{Arens, R.}, \bibinfo{year}{1946}.
\newblock \bibinfo{title}{Topologies for homeomorphism groups}.
\newblock \bibinfo{journal}{Amer. J. Math.} \bibinfo{volume}{68},
  \bibinfo{pages}{593--610}.
\newblock \DOIprefix\doi{10.2307/2371787}.
\bibitem[{Arens and Eells~Jr(1956)}]{ArensEells}
\bibinfo{author}{Arens, R.}, \bibinfo{author}{Eells~Jr, J.},
  \bibinfo{year}{1956}.
\newblock \bibinfo{title}{On embedding uniform and topological spaces}.
\newblock \bibinfo{journal}{Pacific J. Math.} \bibinfo{volume}{6},
  \bibinfo{pages}{397--403}.
\bibitem[{Banakh(1992)}]{Banakh}
\bibinfo{author}{Banakh, T.O.}, \bibinfo{year}{1992}.
\newblock \bibinfo{title}{\foreignlanguage{russian}{О топологии
  конструкции Милнора универсального
  {$G$}-расслоения}}.
\newblock \bibinfo{journal}{\foreignlanguage{russian}{Сибирский
  Математический Журнал}} \bibinfo{volume}{33},
  \bibinfo{pages}{16--25}.
\newblock \DOIprefix\doi{10.1007/BF00972931}.
\bibitem[{Baum et~al.(1994)Baum, Connes and Higson}]{BCH}
\bibinfo{author}{Baum, P.}, \bibinfo{author}{Connes, A.},
  \bibinfo{author}{Higson, N.}, \bibinfo{year}{1994}.
\newblock \bibinfo{title}{Classifying space for proper actions and {$K$}-theory
  of group {$C^\ast$}-algebras}, in: \bibinfo{booktitle}{{$C^\ast$}-algebras:
  1943--1993 ({S}an {A}ntonio, {TX}, 1993)}. \bibinfo{publisher}{Amer. Math.
  Soc., Providence, RI}. volume \bibinfo{volume}{167} of
  \textit{\bibinfo{series}{Contemp. Math.}}, pp. \bibinfo{pages}{240--291}.
\newblock \DOIprefix\doi{10.1090/conm/167/1292018}.
\bibitem[{Biller(2004)}]{Biller_proper}
\bibinfo{author}{Biller, H.}, \bibinfo{year}{2004}.
\newblock \bibinfo{title}{Characterizations of proper actions}.
\newblock \bibinfo{journal}{Math. Proc. Cambridge Philos. Soc.}
  \bibinfo{volume}{136}, \bibinfo{pages}{429--439}.
\newblock \DOIprefix\doi{10.1017/S0305004103007217}.
\bibitem[{Bj\"{o}rner(1984)}]{Bjoerner}
\bibinfo{author}{Bj\"{o}rner, A.}, \bibinfo{year}{1984}.
\newblock \bibinfo{title}{Posets, regular {CW} complexes and {B}ruhat order}.
\newblock \bibinfo{journal}{European J. Combin.} \bibinfo{volume}{5},
  \bibinfo{pages}{7--16}.
\newblock \DOIprefix\doi{10.1016/S0195-6698(84)80012-8}.
\bibitem[{Bourbaki(1961)}]{Bourbaki}
\bibinfo{author}{Bourbaki, N.}, \bibinfo{year}{1961}.
\newblock \bibinfo{title}{{S}tructures topologiques}.
\newblock Number~\bibinfo{number}{I} in \bibinfo{series}{\'{E}l\'{e}ments de
  math\'{e}matique: {T}opologie g\'{e}n\'{e}rale}. \bibinfo{edition}{third
  (recast)} ed., \bibinfo{publisher}{Actualit\'{e}s Sci. Indust. \#1142},
  \bibinfo{address}{Paris}.
\bibitem[{Bourbaki(1981)}]{Bourbaki3}
\bibinfo{author}{Bourbaki, N.}, \bibinfo{year}{1981}.
\newblock \bibinfo{title}{Ensembles convexes et espaces localement convexes}.
\newblock Number~\bibinfo{number}{II} in \bibinfo{series}{\'{E}l\'{e}ments de
  math\'{e}matique: Espaces vectoriels topologiques}. \bibinfo{edition}{second
  (recast)} ed., \bibinfo{publisher}{Masson}, \bibinfo{address}{Paris}.
\bibitem[{Bredon(1972)}]{Bredon_TG}
\bibinfo{author}{Bredon, G.E.}, \bibinfo{year}{1972}.
\newblock \bibinfo{title}{Introduction to {C}ompact {T}ransformation {G}roups}.
\newblock Number~\bibinfo{number}{46} in \bibinfo{series}{Pure and Applied
  Mathematics}, \bibinfo{publisher}{Academic Press}, \bibinfo{address}{New York
  and London}.
\bibitem[{Cartan(1930)}]{CartanE}
\bibinfo{author}{Cartan, E.}, \bibinfo{year}{1930}.
\newblock \bibinfo{title}{La th\'{e}orie des groupes finis et continus et
  l'{A}nalysis situs}.
\newblock Number~\bibinfo{number}{42} in \bibinfo{series}{M\'{e}morial des
  {S}ciences {M}ath\'{e}matiques}, \bibinfo{publisher}{Gauthier-Villars},
  \bibinfo{address}{Paris}.
\bibitem[{{\v{C}}ech(1937)}]{Cech}
\bibinfo{author}{{\v{C}}ech, E.}, \bibinfo{year}{1937}.
\newblock \bibinfo{title}{On bicompact spaces}.
\newblock \bibinfo{journal}{Ann. of Math. (2)} \bibinfo{volume}{38},
  \bibinfo{pages}{823--844}.
\newblock \DOIprefix\doi{10.2307/1968839}.
\bibitem[{Cianci and Ottina()}]{CO}
\bibinfo{author}{Cianci, N.}, \bibinfo{author}{Ottina, M.}, .
\newblock \bibinfo{title}{Fiber bundles over {A}lexandroff spaces}.
\newblock \bibinfo{note}{\url{https://arxiv.org/abs/1907.03614v2}}.
\bibitem[{Crowell(1963)}]{Crowell}
\bibinfo{author}{Crowell, R.H.}, \bibinfo{year}{1963}.
\newblock \bibinfo{title}{Invertible isotopies}.
\newblock \bibinfo{journal}{Proc. Amer. Math. Soc.} \bibinfo{volume}{14},
  \bibinfo{pages}{658--664}.
\newblock \DOIprefix\doi{10.2307/2034296}.
\bibitem[{tom Dieck(1966)}]{tomDieck_numerable}
\bibinfo{author}{tom Dieck, T.}, \bibinfo{year}{1966}.
\newblock \bibinfo{title}{Klassifikation numerierbarer {B}\"{u}ndel}.
\newblock \bibinfo{journal}{Arch. Math. (Basel)} \bibinfo{volume}{17},
  \bibinfo{pages}{395--399}.
\newblock \DOIprefix\doi{10.1007/BF01899618}.
\bibitem[{tom Dieck(1972)}]{tomDieck_orbittypes}
\bibinfo{author}{tom Dieck, T.}, \bibinfo{year}{1972}.
\newblock \bibinfo{title}{Orbittypen und \"{a}quivariante {H}omologie, {I}}.
\newblock \bibinfo{journal}{Arch. Math. (Basel)} \bibinfo{volume}{23},
  \bibinfo{pages}{307--317}.
\newblock \DOIprefix\doi{10.1007/BF01304886}.
\bibitem[{tom Dieck(1987)}]{tomDieck_TG}
\bibinfo{author}{tom Dieck, T.}, \bibinfo{year}{1987}.
\newblock \bibinfo{title}{Transformation {G}roups}.
\newblock Number~\bibinfo{number}{8} in \bibinfo{series}{De Gruyter Studies in
  Mathematics}, \bibinfo{publisher}{Walter de Gruyter},
  \bibinfo{address}{Berlin}.
\newblock \DOIprefix\doi{10.1515/9783110858372.312}.
\bibitem[{Dietze and Schaps(1974)}]{DS}
\bibinfo{author}{Dietze, A.}, \bibinfo{author}{Schaps, M.},
  \bibinfo{year}{1974}.
\newblock \bibinfo{title}{Determining subgroups of a given finite index in a
  finitely presented group}.
\newblock \bibinfo{journal}{Canadian J. Math.} \bibinfo{volume}{26},
  \bibinfo{pages}{769--782}.
\newblock \DOIprefix\doi{10.4153/CJM-1974-072-0}.
\bibitem[{Dieudonn\'{e}(1944)}]{Dieudonne}
\bibinfo{author}{Dieudonn\'{e}, J.}, \bibinfo{year}{1944}.
\newblock \bibinfo{title}{Une g\'{e}n\'{e}ralisation des espaces compacts}.
\newblock \bibinfo{journal}{J. Math. Pures Appl. (9)} \bibinfo{volume}{23},
  \bibinfo{pages}{65--76}.
\bibitem[{Dold(1963)}]{Dold}
\bibinfo{author}{Dold, A.}, \bibinfo{year}{1963}.
\newblock \bibinfo{title}{Partitions of unity in the theory of fibrations}.
\newblock \bibinfo{journal}{Ann. of Math. (2)} \bibinfo{volume}{78},
  \bibinfo{pages}{223--255}.
\newblock \DOIprefix\doi{10.2307/1970341}.
\bibitem[{Dowker(1951)}]{Dowker4}
\bibinfo{author}{Dowker, C.}, \bibinfo{year}{1951}.
\newblock \bibinfo{title}{On countably paracompact spaces}.
\newblock \bibinfo{journal}{Canadian J. Math.} \bibinfo{volume}{3},
  \bibinfo{pages}{219--224}.
\newblock \DOIprefix\doi{10.4153/cjm-1951-026-2}.
\bibitem[{Dugundji(1951)}]{Dugundji}
\bibinfo{author}{Dugundji, J.}, \bibinfo{year}{1951}.
\newblock \bibinfo{title}{An extension of {T}ietze's theorem}.
\newblock \bibinfo{journal}{Pacific J. Math.} \bibinfo{volume}{1},
  \bibinfo{pages}{353--367}.
\bibitem[{Ehresmann and Feldbau(1941)}]{EF}
\bibinfo{author}{Ehresmann, C.}, \bibinfo{author}{Feldbau, J.},
  \bibinfo{year}{1941}.
\newblock \bibinfo{title}{Sur les propri\'{e}t\'{e}s d'homotopie des espaces
  fibr\'{e}s}.
\newblock \bibinfo{journal}{C. R. Acad. Sci. Paris} \bibinfo{volume}{212},
  \bibinfo{pages}{945--948}.
\bibitem[{Elmendorf(1983)}]{Elmendorf}
\bibinfo{author}{Elmendorf, A.D.}, \bibinfo{year}{1983}.
\newblock \bibinfo{title}{Systems of fixed point sets}.
\newblock \bibinfo{journal}{Trans. Amer. Math. Soc.} \bibinfo{volume}{277},
  \bibinfo{pages}{275--284}.
\newblock \DOIprefix\doi{10.2307/1999356}.
\bibitem[{Engelking(1989)}]{Engelking2}
\bibinfo{author}{Engelking, R.}, \bibinfo{year}{1989}.
\newblock \bibinfo{title}{General {T}opology}.
\newblock Number~\bibinfo{number}{6} in \bibinfo{series}{Sigma Series in Pure
  Mathematics}. \bibinfo{edition}{second (expanded)} ed.,
  \bibinfo{publisher}{Heldermann Verlag}, \bibinfo{address}{Berlin}.
\newblock \bibinfo{note}{Translated from the Polish}.
\bibitem[{Fox(1945)}]{Fox}
\bibinfo{author}{Fox, R.}, \bibinfo{year}{1945}.
\newblock \bibinfo{title}{On topologies for function spaces}.
\newblock \bibinfo{journal}{Bull. Amer. Math. Soc.} \bibinfo{volume}{51},
  \bibinfo{pages}{429--432}.
\newblock \DOIprefix\doi{10.1090/S0002-9904-1945-08370-0}.
\bibitem[{Fr{\'e}chet(1918)}]{Frechet}
\bibinfo{author}{Fr{\'e}chet, M.}, \bibinfo{year}{1918}.
\newblock \bibinfo{title}{Sur la notion de voisinage dans les ensembles
  abstraits}.
\newblock \bibinfo{journal}{Bull. Sci. Math. (2)} \bibinfo{volume}{42},
  \bibinfo{pages}{138--156}.
\bibitem[{Fritsch and Golasi\'{n}ski(2004)}]{FG}
\bibinfo{author}{Fritsch, R.}, \bibinfo{author}{Golasi\'{n}ski, M.},
  \bibinfo{year}{2004}.
\newblock \bibinfo{title}{Topological, simplicial and categorical joins}.
\newblock \bibinfo{journal}{Arch. Math. (Basel)} \bibinfo{volume}{82},
  \bibinfo{pages}{468--480}.
\newblock \DOIprefix\doi{10.1007/s00013-003-0431-y}.
\bibitem[{Gelfand and Fuks(1968)}]{GF}
\bibinfo{author}{Gelfand, I.M.}, \bibinfo{author}{Fuks, D.B.},
  \bibinfo{year}{1968}.
\newblock \bibinfo{title}{\foreignlanguage{russian}{О
  классифицирующих пространствах для
  главных расслоений с хаусдорфовыми
  базами}}.
\newblock \bibinfo{journal}{\foreignlanguage{russian}{Доклады
  Академии Наук СССР}} \bibinfo{volume}{181},
  \bibinfo{pages}{515--518}.
\bibitem[{Goresky and MacPherson(1988)}]{GM_book}
\bibinfo{author}{Goresky, M.}, \bibinfo{author}{MacPherson, R.},
  \bibinfo{year}{1988}.
\newblock \bibinfo{title}{Stratified {M}orse theory}.
\newblock Number~\bibinfo{number}{14} in \bibinfo{series}{Ergebnisse der
  Mathematik und ihrer Grenzgebiete (3)}, \bibinfo{publisher}{Springer-Verlag},
  \bibinfo{address}{Berlin}.
\newblock \DOIprefix\doi{10.1007/978-3-642-71714-7}.
\bibitem[{Guillou et~al.(2017)Guillou, May and Merling}]{GMM}
\bibinfo{author}{Guillou, B.J.}, \bibinfo{author}{May, J.P.},
  \bibinfo{author}{Merling, M.}, \bibinfo{year}{2017}.
\newblock \bibinfo{title}{Categorical models for equivariant classifying
  spaces}.
\newblock \bibinfo{journal}{Algebr. Geom. Topol.} \bibinfo{volume}{17},
  \bibinfo{pages}{2565--2602}.
\newblock \DOIprefix\doi{10.2140/agt.2017.17.2565}.
\bibitem[{Haver(1973)}]{Haver}
\bibinfo{author}{Haver, W.E.}, \bibinfo{year}{1973}.
\newblock \bibinfo{title}{Locally contractible spaces that are absolute
  neighborhood retracts}.
\newblock \bibinfo{journal}{Proc. Amer. Math. Soc.} \bibinfo{volume}{40},
  \bibinfo{pages}{280--284}.
\newblock \DOIprefix\doi{10.2307/2038681}.
\bibitem[{Hofmann and Morris(2006)}]{HM}
\bibinfo{author}{Hofmann, K.H.}, \bibinfo{author}{Morris, S.A.},
  \bibinfo{year}{2006}.
\newblock \bibinfo{title}{The {S}tructure of {C}ompact {G}roups}.
\newblock Number~\bibinfo{number}{25} in \bibinfo{series}{Studies in
  Mathematics}. \bibinfo{edition}{second (augmented)} ed.,
  \bibinfo{publisher}{Walter de Gruyter}, \bibinfo{address}{Berlin}.
\newblock \DOIprefix\doi{10.1515/9783110199772}.
\bibitem[{Holm(1967)}]{Holm}
\bibinfo{author}{Holm, P.}, \bibinfo{year}{1967}.
\newblock \bibinfo{title}{The microbundle representation theorem}.
\newblock \bibinfo{journal}{Acta Math.} \bibinfo{volume}{117},
  \bibinfo{pages}{191--213}.
\newblock \DOIprefix\doi{10.1007/BF02395045}.
\bibitem[{Hu(1965)}]{Hu}
\bibinfo{author}{Hu, S.T.}, \bibinfo{year}{1965}.
\newblock \bibinfo{title}{Theory of {R}etracts}.
\newblock \bibinfo{publisher}{Wayne State University Press},
  \bibinfo{address}{Detroit}.
\bibitem[{Hughes(1999)}]{Hughes}
\bibinfo{author}{Hughes, C.B.}, \bibinfo{year}{1999}.
\newblock \bibinfo{title}{Stratified path spaces and fibrations}.
\newblock \bibinfo{journal}{Proc. Roy. Soc. Edinburgh Sect. A}
  \bibinfo{volume}{129}, \bibinfo{pages}{351--384}.
\newblock \DOIprefix\doi{10.1017/S0308210500021405}.
\bibitem[{Husem\"{o}ller(1966)}]{Husemoller}
\bibinfo{author}{Husem\"{o}ller, D.}, \bibinfo{year}{1966}.
\newblock \bibinfo{title}{Fibre {B}undles}.
\newblock \bibinfo{publisher}{McGraw-Hill Book Co},
  \bibinfo{address}{NewYork-London-Sydney}.
\bibitem[{James and Segal(1980)}]{JS}
\bibinfo{author}{James, I.}, \bibinfo{author}{Segal, G.}, \bibinfo{year}{1980}.
\newblock \bibinfo{title}{On equivariant homotopy theory}, in:
  \bibinfo{booktitle}{Topology {S}ymposium, {S}iegen 1979},
  \bibinfo{publisher}{Springer}. pp. \bibinfo{pages}{316--330}.
\bibitem[{Khan()}]{Khan_Lie}
\bibinfo{author}{Khan, Q.}, .
\newblock \bibinfo{title}{Countable approximation of topological
  {$G$}-manifolds, {III}: arbitrary {L}ie groups {$G$}}.
\newblock \bibinfo{note}{\url{https://arxiv.org/abs/1905.09977}; submitted for
  publication}.
\bibitem[{Kolmogorov(1934)}]{Kolmogorov_regular}
\bibinfo{author}{Kolmogorov, A.N.}, \bibinfo{year}{1934}.
\newblock \bibinfo{title}{Zur {N}ormierbarkeit eines allgemeinen topologischen
  linearen {R}aumes}.
\newblock \bibinfo{journal}{Stud. Math.} \bibinfo{volume}{5},
  \bibinfo{pages}{29--33}.
\bibitem[{Lashof(1982)}]{Lashof_equivbundles}
\bibinfo{author}{Lashof, R.}, \bibinfo{year}{1982}.
\newblock \bibinfo{title}{Equivariant bundles}.
\newblock \bibinfo{journal}{Illinois J. Math.} \bibinfo{volume}{26},
  \bibinfo{pages}{257--271}.
\bibitem[{Lashof and May(1986)}]{LM}
\bibinfo{author}{Lashof, R.}, \bibinfo{author}{May, J.P.},
  \bibinfo{year}{1986}.
\newblock \bibinfo{title}{Generalized equivariant bundles}.
\newblock \bibinfo{journal}{Bull. Soc. Math. Belg.} \bibinfo{volume}{A38},
  \bibinfo{pages}{265--271}.
\bibitem[{Lashof and Rothenberg(1978)}]{LR}
\bibinfo{author}{Lashof, R.}, \bibinfo{author}{Rothenberg, M.},
  \bibinfo{year}{1978}.
\newblock \bibinfo{title}{{$G$}-smoothing theory}, in:
  \bibinfo{booktitle}{Algebraic and {G}eometric {T}opology},
  \bibinfo{publisher}{American Mathematical Society},
  \bibinfo{address}{Providence}. pp. \bibinfo{pages}{211--266}.
\bibitem[{Lefschetz(1942)}]{Lefschetz_TT}
\bibinfo{author}{Lefschetz, S.}, \bibinfo{year}{1942}.
\newblock \bibinfo{title}{Topics in {T}opology}.
\newblock Number~\bibinfo{number}{10} in \bibinfo{series}{Annals of Mathematics
  Studies}, \bibinfo{publisher}{Princeton University Press},
  \bibinfo{address}{Princeton}.
\bibitem[{Lindel\"{o}f(1904)}]{Lindelof}
\bibinfo{author}{Lindel\"{o}f, E.}, \bibinfo{year}{1904}.
\newblock \bibinfo{title}{{Sur quelques points de la th\'eorie des ensembles.}}
\newblock \bibinfo{journal}{{C. R. Acad. Sci. Paris}} \bibinfo{volume}{137},
  \bibinfo{pages}{697--700}.
\bibitem[{L\"{u}ck(2005)}]{Lueck_classify}
\bibinfo{author}{L\"{u}ck, W.}, \bibinfo{year}{2005}.
\newblock \bibinfo{title}{Survey on classifying spaces for families of
  subgroups}, in: \bibinfo{booktitle}{Infinite groups: geometric, combinatorial
  and dynamical aspects}. \bibinfo{publisher}{Birkh\"{a}user, Basel}. volume
  \bibinfo{volume}{248} of \textit{\bibinfo{series}{Progr. Math.}}, pp.
  \bibinfo{pages}{269--322}.
\newblock \DOIprefix\doi{10.1007/3-7643-7447-0_7}.
\bibitem[{L{\"u}ck and Uribe(2014)}]{LU}
\bibinfo{author}{L{\"u}ck, W.}, \bibinfo{author}{Uribe, B.},
  \bibinfo{year}{2014}.
\newblock \bibinfo{title}{Equivariant principal bundles and their classifying
  spaces}.
\newblock \bibinfo{journal}{Algebr. Geom. Topol.} \bibinfo{volume}{14},
  \bibinfo{pages}{1925--1995}.
\newblock \DOIprefix\doi{10.2140/agt.2014.14.1925}.
\bibitem[{Lundell and Weingram(1969)}]{LW_book}
\bibinfo{author}{Lundell, A.T.}, \bibinfo{author}{Weingram, S.},
  \bibinfo{year}{1969}.
\newblock \bibinfo{title}{The {T}opology of {CW} {C}omplexes}.
\newblock The University Series in Higher Mathematics, \bibinfo{publisher}{Van
  Nostrand Reinhold Co., New York}.
\newblock \DOIprefix\doi{10.1007/978-1-4684-6254-8}.
\bibitem[{Lurie()}]{Lurie_HigherAlgebra}
\bibinfo{author}{Lurie, J.}, .
\newblock \bibinfo{title}{Higher {A}lgebra}.
\newblock \bibinfo{note}{\url{https://www.math.ias.edu/~lurie/papers/HA.pdf}}.
\bibitem[{MacLane(1971)}]{MacLane_book}
\bibinfo{author}{MacLane, S.}, \bibinfo{year}{1971}.
\newblock \bibinfo{title}{Categories for the {W}orking {M}athematician}.
\newblock Number~\bibinfo{number}{5} in \bibinfo{series}{Graduate Texts in
  Mathematics}, \bibinfo{publisher}{Springer-Verlag}, \bibinfo{address}{New
  York-Berlin}.
\bibitem[{McCord(1966)}]{McCord}
\bibinfo{author}{McCord, M.C.}, \bibinfo{year}{1966}.
\newblock \bibinfo{title}{Singular homology groups and homotopy groups of
  finite topological spaces}.
\newblock \bibinfo{journal}{Duke Math. J.} \bibinfo{volume}{33},
  \bibinfo{pages}{465--474}.
\bibitem[{Michael(1953)}]{Michael}
\bibinfo{author}{Michael, E.}, \bibinfo{year}{1953}.
\newblock \bibinfo{title}{A note on paracompact spaces}.
\newblock \bibinfo{journal}{Proc. Amer. Math. Soc.} \bibinfo{volume}{4},
  \bibinfo{pages}{831--838}.
\newblock \DOIprefix\doi{10.2307/2032419}.
\bibitem[{Milnor(1956)}]{Milnor_univ2}
\bibinfo{author}{Milnor, J.W.}, \bibinfo{year}{1956}.
\newblock \bibinfo{title}{Construction of universal bundles, {II}}.
\newblock \bibinfo{journal}{Ann. of Math. (2)} \bibinfo{volume}{63},
  \bibinfo{pages}{430--436}.
\newblock \DOIprefix\doi{10.2307/1970012}.
\bibitem[{Milnor(1957)}]{Milnor_notes}
\bibinfo{author}{Milnor, J.W.}, \bibinfo{year}{1957}.
\newblock \bibinfo{title}{Lectures on {C}haracteristic {C}lasses}.
\newblock \bibinfo{publisher}{Princeton University}.
\newblock \bibinfo{note}{Notes by James Stasheff}.
\bibitem[{Milnor(1964)}]{Milnor_microbundles}
\bibinfo{author}{Milnor, J.W.}, \bibinfo{year}{1964}.
\newblock \bibinfo{title}{Microbundles}.
\newblock \bibinfo{journal}{Topology} \bibinfo{volume}{3},
  \bibinfo{pages}{53--80}.
\newblock \DOIprefix\doi{10.1016/0040-9383(64)90005-9}.
\bibitem[{Milnor and Stasheff(1974)}]{MilnorStasheff}
\bibinfo{author}{Milnor, J.W.}, \bibinfo{author}{Stasheff, J.D.},
  \bibinfo{year}{1974}.
\newblock \bibinfo{title}{Characteristic {C}lasses}.
\newblock \bibinfo{publisher}{Princeton University Press, Princeton, N. J.;
  University of Tokyo Press, Tokyo}.
\newblock \bibinfo{note}{Annals of Mathematics Studies, No. 76}.
\bibitem[{Montgomery and Zippin(1955)}]{MZ_book}
\bibinfo{author}{Montgomery, D.}, \bibinfo{author}{Zippin, L.},
  \bibinfo{year}{1955}.
\newblock \bibinfo{title}{Topological {T}ransformation {G}roups}.
\newblock \bibinfo{publisher}{Interscience Publishers}, \bibinfo{address}{New
  York-London}.
\bibitem[{Munkres(2000)}]{Munkres}
\bibinfo{author}{Munkres, J.R.}, \bibinfo{year}{2000}.
\newblock \bibinfo{title}{Topology}.
\newblock \bibinfo{publisher}{Prentice Hall}, \bibinfo{address}{Upper Saddle
  River}.
\bibitem[{Palais(1960)}]{Palais_book}
\bibinfo{author}{Palais, R.S.}, \bibinfo{year}{1960}.
\newblock \bibinfo{title}{The {C}lassification of {$G$}-spaces}.
\newblock Number~\bibinfo{number}{36} in \bibinfo{series}{Memoirs of the AMS},
  \bibinfo{publisher}{American Mathematical Society},
  \bibinfo{address}{Providence}.
\bibitem[{Palais(1961)}]{Palais}
\bibinfo{author}{Palais, R.S.}, \bibinfo{year}{1961}.
\newblock \bibinfo{title}{On the existence of slices for actions of non-compact
  {L}ie groups}.
\newblock \bibinfo{journal}{Ann. of Math. (2)} \bibinfo{volume}{73},
  \bibinfo{pages}{295--323}.
\newblock \DOIprefix\doi{10.2307/1970335}.
\bibitem[{Pontryagin(1954)}]{Pontryagin2}
\bibinfo{author}{Pontryagin, L.S.}, \bibinfo{year}{1954}.
\newblock \bibinfo{title}{\foreignlanguage{russian}{Непрерывные
  Группы}}.
\newblock \bibinfo{edition}{Second (expanded)} ed.,
  \bibinfo{publisher}{\foreignlanguage{russian}{Государственное
  Издательство Технико-Теоретической
  Литературы}}, \bibinfo{address}{Moscow}.
\bibitem[{Segal(1975)}]{Segal}
\bibinfo{author}{Segal, G.}, \bibinfo{year}{1975}.
\newblock
  \bibinfo{title}{\foreignlanguage{russian}{Классифицирующее
  пространство топологической группы в
  смысле Гельфанда--Фукса}}.
\newblock \bibinfo{journal}{\foreignlanguage{russian}{Функциольный
  Анализ и его Приложения}} \bibinfo{volume}{9},
  \bibinfo{pages}{48--50}.
\newblock \DOIprefix\doi{10.1007/BF01075450}.
\bibitem[{Sierpi\'{n}ski(1952)}]{Sierpinski_book2eng}
\bibinfo{author}{Sierpi\'{n}ski, W.}, \bibinfo{year}{1952}.
\newblock \bibinfo{title}{General {T}opology}.
\newblock Number~\bibinfo{number}{7} in \bibinfo{series}{Mathematical
  Expositions}. \bibinfo{edition}{second (recast)} ed.,
  \bibinfo{publisher}{University of Toronto Press}, \bibinfo{address}{Toronto}.
\newblock \bibinfo{note}{Translated from the Polish}.
\bibitem[{Steen and {Seebach Jr}(1978)}]{SS}
\bibinfo{author}{Steen, L.A.}, \bibinfo{author}{{Seebach Jr}, J.A.},
  \bibinfo{year}{1978}.
\newblock \bibinfo{title}{Counterexamples in {T}opology}.
\newblock \bibinfo{edition}{Second (corrected)} ed.,
  \bibinfo{publisher}{Springer Verlag}, \bibinfo{address}{New York and
  Heidelberg}.
\bibitem[{Steenrod(1951)}]{Steenrod_book}
\bibinfo{author}{Steenrod, N.E.}, \bibinfo{year}{1951}.
\newblock \bibinfo{title}{The {T}opology of {F}ibre {B}undles}.
\newblock Number~\bibinfo{number}{14} in \bibinfo{series}{Princeton
  Mathematical Series}, \bibinfo{publisher}{Princeton University Press},
  \bibinfo{address}{Princeton}.
\bibitem[{Steenrod(1967)}]{Steenrod_convenient}
\bibinfo{author}{Steenrod, N.E.}, \bibinfo{year}{1967}.
\newblock \bibinfo{title}{A convenient category of topological spaces}.
\newblock \bibinfo{journal}{Michigan Math. J.} \bibinfo{volume}{14},
  \bibinfo{pages}{133--152}.
\bibitem[{Stone(1948)}]{Stone1}
\bibinfo{author}{Stone, A.H.}, \bibinfo{year}{1948}.
\newblock \bibinfo{title}{Paracompactness and product spaces}.
\newblock \bibinfo{journal}{Bull. Amer. Math. Soc.} \bibinfo{volume}{54},
  \bibinfo{pages}{977--982}.
\newblock \DOIprefix\doi{10.1090/S0002-9904-1948-09118-2}.
\bibitem[{Tietze(1915)}]{Tietze}
\bibinfo{author}{Tietze, H.}, \bibinfo{year}{1915}.
\newblock \bibinfo{title}{\"{U}ber {F}unktionen, die auf einer abgeschlossenen
  {M}enge stetig sind}.
\newblock \bibinfo{journal}{J. Reine Angew. Math.} \bibinfo{volume}{145},
  \bibinfo{pages}{9--14}.
\newblock \DOIprefix\doi{10.1515/crll.1915.145.9}.
\bibitem[{Tikhonov(1930)}]{Tikhonov}
\bibinfo{author}{Tikhonov, A.N.}, \bibinfo{year}{1930}.
\newblock \bibinfo{title}{\"{U}ber die topologische {E}rweiterung von
  {R}\"{a}umen}.
\newblock \bibinfo{journal}{Math. Ann.} \bibinfo{volume}{102},
  \bibinfo{pages}{544--561}.
\newblock \DOIprefix\doi{10.1007/BF01782364}.
\bibitem[{Urysohn(1925)}]{Urysohn}
\bibinfo{author}{Urysohn, P.}, \bibinfo{year}{1925}.
\newblock \bibinfo{title}{\"{U}ber die {M}\"{a}chtigkeit der
  zusammenh\"{a}ngenden {M}engen}.
\newblock \bibinfo{journal}{Math. Ann.} \bibinfo{volume}{94},
  \bibinfo{pages}{262--295}.
\newblock \DOIprefix\doi{10.1007/BF01208659}.
\bibitem[{Weinberger(1994)}]{Weinberger_book}
\bibinfo{author}{Weinberger, S.}, \bibinfo{year}{1994}.
\newblock \bibinfo{title}{The {T}opological {C}lassification of {S}tratified
  {S}paces}.
\newblock Chicago Lectures in Mathematics, \bibinfo{publisher}{University of
  Chicago Press}, \bibinfo{address}{Chicago}.
\bibitem[{Whitehead(1939)}]{Whitehead_simplicial}
\bibinfo{author}{Whitehead, J.H.C.}, \bibinfo{year}{1939}.
\newblock \bibinfo{title}{Simplicial spaces, nuclei and {$m$}-groups}.
\newblock \bibinfo{journal}{Proc. London Math. Soc. (2)} \bibinfo{volume}{45},
  \bibinfo{pages}{243--327}.
\newblock \DOIprefix\doi{10.1112/plms/s2-45.1.243}.
\bibitem[{Zhang et~al.(2019)Zhang, Antonyan and Antonyan}]{ZAA}
\bibinfo{author}{Zhang, L.L.}, \bibinfo{author}{Antonyan, S.},
  \bibinfo{author}{Antonyan, N.}, \bibinfo{year}{2019}.
\newblock \bibinfo{title}{Universal {$G$}-spaces for proper free actions}.
\newblock \bibinfo{journal}{Topology Appl.} \bibinfo{volume}{264},
  \bibinfo{pages}{336--351}.
\newblock \DOIprefix\doi{10.1016/j.topol.2019.06.034}.

\end{thebibliography}

\end{document}